\documentclass[11pt,reqno,final]{amsart}

\usepackage[utf8]{inputenc}
\usepackage[marginpar=2cm]{geometry}
\usepackage{setspace}
\usepackage{indentfirst}
\usepackage{tgtermes}
\RequirePackage{lineno}
\usepackage{mathrsfs,mathtools,amssymb,stmaryrd,tikz-cd}
\usepackage{enumitem}

\theoremstyle{plain}
\newtheorem{introthm}{Theorem}

\newtheorem{theorem}{Theorem}[section]
\newtheorem{lemma}[theorem]{Lemma}
\newtheorem{proposition}[theorem]{Proposition}
\newtheorem{corollary}[theorem]{Corollary}

\theoremstyle{definition}
\newtheorem{definition}[theorem]{Definition}

\theoremstyle{remark}
\newtheorem{remark}[theorem]{Remark}

\usepackage[backend=bibtex,bibencoding=utf8,style=numeric,
url=false,doi=false,eprint=true,isbn=false,
hyperref=auto,backref=false]{biblatex}
\usepackage[notcite,notref,color]{showkeys}

\usepackage[pagebackref=false,hidelinks,unicode=true,bookmarks=true,
linktoc=page,pdfstartview={FitH},final=true]{hyperref}

\allowdisplaybreaks[3]
\binoppenalty=\maxdimen
\relpenalty=\maxdimen
\DeclareMathOperator{\id}{id}
\DeclareMathOperator{\pr}{pr}
\DeclareMathOperator{\ev}{ev}
\DeclareMathOperator{\sym}{sym}
\DeclareMathOperator{\Hom}{Hom}

\DeclareMathOperator{\Der}{Der}
\DeclareMathOperator{\coDer}{coDer}
\DeclareMathOperator{\CE}{CE}
\DeclareMathOperator{\tw}{tw}
\DeclareMathOperator{\sh}{sh}
\DeclareMathOperator{\lin}{lin}
\DeclareMathOperator{\con}{con}
\DeclareMathOperator{\HH}{HH}

\newcommand{\cA}{\mathcal{A}}
\newcommand{\cC}{\mathcal{C}}

\newcommand{\cL}{\mathcal{L}}

\newcommand{\XX}{\mathfrak{X}}

\newcommand{\ZZ}{\mathbb{Z}}

\newcommand{\CC}{\mathbb{C}}
\newcommand{\KK}{\mathbb{K}}

\newcommand{\tensor}{\otimes}
\newcommand{\tangent}[1]{T{#1}}
\newcommand{\tangentp}[2]{T_{#1}{#2}}
\newcommand{\liederivative}[1]{\cL_{#1}}
\newcommand{\tliederivative}[1]{\widetilde{\cL}_{#1}}
\newcommand{\into}{\hookrightarrow}
\newcommand{\onto}{\twoheadrightarrow}
\newcommand{\xto}{\xrightarrow}

\newcommand{\conv}{\star}
\newcommand{\degree}[1]{|{#1}|}
\newcommand{\argument}{-}
\newcommand{\sign}{\varepsilon}
\newcommand{\shuffle}{\sh}

\newcommand{\bfv}{\mathbf{v}}
\newcommand{\bfw}{\mathbf{w}}
\newcommand{\transpose}{T}

\newcommand{\fG}{\mathfrak{G}}
\newcommand{\fUV}{\mathfrak{U}^{V}}
\newcommand{\act}{\mathbin{\mkern 1mu \vcenter{\hbox{\scalebox{0.6}{$\blacklozenge$}}} \mkern 1mu}}

\title{Linearisation, splitting property and homotopy algebras}

\author{Seokbong Seol}
\address{School of Mathematics, Korea Institute for Advanced Study}
\email{azuredream89@kias.re.kr}

\author{Kai Wang}
\address{School of Mathematical Sciences, University of Science and Technology of China}
\email{wangkai17@ustc.edu.cn}

\thanks{The first author is supported by the KIAS Individual Grant MG090802 at Korea Institute for Advanced Study. The second author is supported by the National Key R\&D Program of China (No. 2024YFA1013803) and the NSFC (No. 12501053).}

\begin{document}
	
\begin{abstract}
In this paper, we study the formal linearisation problem for vector fields in the framework of graded coalgebras. We prove that a formal vector field is linearisable if and only if it satisfies a splitting property, by providing an explicit recursive construction of the isomorphism that linearises it. This criterion yields a streamlined proof of Basto-Gonçalves' theorem on admissible resonant vector fields. We also establish a corresponding splitting criterion for morphisms of formal manifolds, proving that a morphism is linearisable if and only if it satisfies this property.
Furthermore, we obtain an elementary and explicit proof of Bandiera's characterisation of linearisable (equivalently, homotopy abelian) $L_\infty[1]$ algebras. Finally, we extend this framework to $A_\infty[1]$ algebras, showing that their linearisability is similarly characterised by an analogous splitting property.
\end{abstract}

\maketitle

\tableofcontents


\section*{Introduction}

Given a finite-dimensional graded vector space $V$, an $L_{\infty}[1]$ algebra structure on $V$ can be viewed as a derivation $Q$ on the graded algebra of formal power series $\widehat{S}(V^{\vee})$ on $V$. 
If we view $V$ as a graded analogue of a formal manifold (in the sense that its algebra of functions is $\widehat{S}(V^{\vee})$), then $Q$ is viewed as a formal vector field on $V$. By the structure of $L_{\infty}[1]$ algebras, $Q$ vanishes at the origin $0\in V$. 

In~\cite{MR3622306}, Bandiera introduced a notion of ``splitting property'' for an $L_{\infty}[1]$ algebra and showed that an $L_{\infty}[1]$ algebra $(V,Q)$ satisfies the splitting property if and only if there exists an isomorphism of $L_{\infty}[1]$ algebras $\Phi:(V,Q)\to (V,q)$ on the same space $V$, where $q$ is the linear component of $Q$. His method relies on the properties of $L_{\infty}[1]$ algebras, especially those from higher derived brackets~\cite{MR2163405, MR2223157} and the $L_{\infty}[1]$ algebra structure on the mapping cone of a morphism of DG Lie algebras~\cite{MR2361936}.

Motivated by this result, we show that the theorem of Bandiera extends to any formal manifold $V$ equipped with an arbitrary formal vector field $Q$ vanishing at $0\in V$, without relying on any abstract properties from $L_{\infty}[1]$ algebras. We introduce the notion of a splitting property in this setting and show that $(V,Q)$ satisfies the splitting property if and only if $Q$ is linearisable (i.e., there exists a formal change of coordinates of $V$ which transforms $Q$ into a linear vector field).

Note that the space of formal vector fields $\XX(V)=\Der(\widehat{S}(V^{\vee}))$ is naturally equipped with the Lie derivative $\liederivative{Q}$ along $Q$, and the space of constant vector fields $\XX_{\con}(V)$ is naturally equipped with the Lie derivative $\liederivative{q}$ along the linear component $q$ of $Q$. 
The space of constant vector fields $\XX_{\con}(V)$ can be naturally identified with the fibre $T_{0}V\cong V$ of the tangent bundle $TV$ over the origin $0\in V$, and thus the evaluation map $\sigma_{0}:\XX(V)\to \XX_{\con}(V)$ at $0\in V$ intertwines with the Lie derivatives: $\liederivative{q} \sigma_{0}=\sigma_{0} \liederivative{Q}$. We say the pair $(V,Q)$ satisfies the \textit{splitting property} if there exists a map $\tau:\XX_{\con}(V)\to \XX(V)$ such that $\sigma_{0}\tau= \id_{\XX_{\con}(V)}$ and $\tau \liederivative{q} = \liederivative{Q} \tau$.

The terminology \textit{splitting property} arises from the fact that, when $Q$ arises from $L_{\infty}[1]$ algebra, the pairs $(\XX(V),\liederivative{Q})$ and $(\XX_{\con}(V),\liederivative{q})$ are both cochain complexes, and the existence of such $\tau$ is equivalent to saying that the following short exact sequence of cochain complexes splits:
\[\begin{tikzcd}
0 \arrow{r} & \ker \sigma_{0} \arrow{r}  & (\XX(V), \liederivative{Q}) \arrow{r}{\sigma_{0}}& (\XX_{\con}(V), \liederivative{q})\arrow{r} \arrow[dotted, bend left]{l}{\tau} & 0  \, .
\end{tikzcd}
\]

We use the language of coalgebras and coderivations---the dual notions of algebras and derivations---to prove the following:
\begin{introthm}[Theorem~\ref{thm:First}]\label{thm:A}
Let $V$ be a formal manifold, and let $Q$ be a formal vector field on $V$ satisfying $\sigma_{0}(Q)=0$. Then $Q$ is linearisable if and only if $Q$ satisfies the splitting property.
\end{introthm}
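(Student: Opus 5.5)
Both directions will be handled in coordinates: fix a basis $x_1,\dots,x_n$ of $V^{\vee}$, so that every $X\in\XX(V)$ is $X=\sum_i f_i\,\partial_{x_i}$ with $f_i\in\widehat{S}(V^{\vee})$, and $\sigma_0(X)=\sum_i f_i(0)\,\partial_{x_i}$.

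\emph{Easy direction.} First I check the model case $Q=q$ linear. Here $\tau$ can be taken to be the inclusion $\XX_{\con}(V)\into\XX(V)$. Since $q$ is linear, $[q,\partial_{x_i}]$ is again a constant vector field. It equals $\liederivative{q}(\partial_{x_i})$, computed inside $\XX_{\con}(V)$, because $\sigma_0$ is the identity on constant fields. So the inclusion is a section intertwining $\liederivative{q}$ with itself.

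Now suppose $\Phi$ is a formal change of coordinates with $\Phi_*Q=q$ and $\Phi(0)=0$. Conjugation $X\mapsto\Phi_*X$ gives an isomorphism $(\XX(V),\liederivative{Q})\cong(\XX(V),\liederivative{q})$. It intertwines evaluation at $0$ up to the linear automorphism $d\Phi_0$ of $\XX_{\con}(V)\cong V$. The linear part of $Q$ is again $q$, so $d\Phi_0$ commutes with $q$. I may normalise $d\Phi_0=\id$ by composing $\Phi$ with $(d\Phi_0)^{-1}$. Then $\tau:=\Phi^{-1}_*\circ\iota$ satisfies $\sigma_0\tau=\id$ and $\tau\liederivative{q}=\liederivative{Q}\tau$.

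\emph{Hard direction.} Given $\tau$, set $Y_i:=\tau(\partial_{x_i})$. These vector fields satisfy two properties:
\begin{itemize}
\item $Y_i(0)=\partial_{x_i}$, so they form a formal frame near $0$;
\item $[Q,Y_i]=\tau([q,\partial_{x_i}])=\sum_j c_{ij}Y_j$, where $[q,\partial_{x_i}]=\sum_j c_{ij}\partial_{x_j}$ are the constant structure coefficients of $q$.
\end{itemize}
The natural strategy is to build coordinates $y_1,\dots,y_n$ in which $Q$ becomes linear. The frame $Y_i$ need not commute, so one cannot simply integrate it. Instead I will construct the isomorphism recursively, degree by degree, in the polynomial filtration. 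This matches the abstract's promise of an explicit recursive construction.

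The plan is to seek $\Phi=\id+\Phi_{\ge2}$, equivalently functions $y_i=x_i+O(x^2)$, such that $Q(y_i)$ is linear in the $y$'s with the same matrix as $q(x_i)$. Suppose this holds up to order $k$. The order-$(k+1)$ error $E$ lies in the weight-$(k+1)$ part of $\widehat{S}(V^{\vee})\tensor V$. Removing it amounts to solving a homological equation $[q,\phi]=E$ for a homogeneous vector field $\phi$ of degree $k+1$. This is where resonances would normally obstruct.

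I use the splitting to see that the obstruction vanishes. Apply the equation $\tau\liederivative{q}=\liederivative{Q}\tau$ to the partially linearised $Q$, after transporting $\tau$ along the change of coordinates built so far. This shows the truncated error is $\liederivative{q}$-exact. Concretely, write $\tau(\partial_{x_i})=\partial_{x_i}+\tau_{\ge1}(\partial_{x_i})$. The lowest-order component of $\tau_{\ge1}$ that is not yet absorbed provides an explicit $\phi$: I would take $\phi$ to be the degree-$(k+1)$ piece of the vector field dual to $\tau$, i.e.\ define the new coordinates by the condition that $\tau(\partial_{x_i})$ become $\partial_{y_i}$ up to higher order. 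Then I check that the degree-$(k+1)$ part of the intertwining identity is exactly $[q,\phi]=E$.

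\emph{Main obstacle.} The hardest step is this one: proving that the intertwining identity kills the obstruction at each order. Its nonlinearity, coming from $\tau$ interacting with the higher components of $Q$, must be controlled by the induction hypothesis. I would first try a cleaner global formulation, which also suggests the coalgebra language. Dually, $\tau$ amounts to a map $V\to\XX(V)$, i.e.\ a formal family of vector fields. Its flow defines $\Phi^{-1}(v)=\exp(\tau(v))\cdot 0$, and intertwining should make $\Phi$ a morphism $(V,q)\to(V,Q)$. Checking this reduces to the identity $Q\circ\exp(\tau(v))=\exp(\tau(v))\circ(\text{something linear})$, verified order by order, with the recursion ensuring convergence in the $x$-adic topology. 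Finally, $\sigma_0\tau=\id$ gives $d\Phi_0=\id$, so $\Phi$ is invertible.
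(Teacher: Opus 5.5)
Your forward direction is correct and is the paper's argument: normalise $d\Phi_0=\id$, then set $\tau=\Phi^{-1}_{\ast}\circ\iota_V$. The converse, which is the real content of the theorem, is not proved, and the concrete recipe in your order-by-order plan fails. You only assert that the splitting makes the error $\liederivative{q}$-exact. Your choice of $\phi$, making $\tau(\partial_{x_i})$ agree with $\partial_{y_i}$ beyond first order, would force $[\tau(\partial_{x_i}),\tau(\partial_{x_j})](0)=0$, and a splitting map need not satisfy this. For example, for $Q=0$ every $\tau$ with $\sigma_0\tau=\id$ is a splitting, such as $\tau(\partial_1)=\partial_1$, $\tau(\partial_2)=\partial_2+x_1\partial_3$, $\tau(\partial_3)=\partial_3$. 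This frame cannot be straightened to second order, and its non-constant part is never ``absorbed''. In general the lower-order pieces of a transported splitting are only $\tliederivative{q}$-closed, not zero.

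The mechanism that does work is different. Suppose $Q'=q+E+(\text{order}\ge k+2)$ with $E$ homogeneous of degree $k+1$, and let $\tau'$ be the transported splitting. The lowest nontrivial component of the splitting identity is not $[q,\phi]=E$ but $\tliederivative{q}\tau'_k=-E\act\iota_V$, an equation in $\Hom(V,\XX(V))$. It must be contracted back to $\XX(V)$. The Euler contraction $c(F)=\sum_i x_i\,F(\partial_i)$ satisfies $c\circ\tliederivative{q}=\liederivative{q}\circ c$ and $c(E\act\iota_V)=-(k+1)E$. Hence $E=[q,\phi]$ with $\phi=\frac{1}{k+1}\sum_i x_i\,\tau'_k(\partial_i)$. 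This contraction sees only the symmetrised part of $\tau'_k$; it is the geometric form of the averaging $\frac{1}{n+1}\sum_i\tau_k^{v_i}$ in the paper's recursion, and it is the idea your sketch is missing.

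Your global formulation does pick out the right map. For $V$ in degree $0$ and $\bfv=v^{\odot(n+1)}$, the paper's recursion reads $\phi_{n+1}(v^{\odot(n+1)})=\sum_k\tau_k^v\phi_n^k(v^{\odot n})$. This says exactly that $f(t)=\sum_n\frac{t^n}{n!}\phi_n(v^{\odot n})$ solves $f'(t)=\tau(v)|_{f(t)}$ with $f(0)=0$, where $\tau(v)|_w=\sum_k\frac{1}{k!}\tau_k^v(w^{\odot k})$. So the paper's $\Phi$ is precisely $\psi(v)=\exp(\tau(v))\cdot 0$, the map carrying $q$ to $Q$ (your $\Phi^{-1}$).

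However, the identity you propose to check, $Q\circ\exp(\tau(v))=\exp(\tau(v))\circ(\text{linear})$, is not the relevant one. The flow $\exp(\tau(v))$ moves the origin and its generator depends on $v$, so the differential of $\psi$ is not governed by a single flow. An argument that works goes as follows:
\begin{enumerate}
\item Write $q$ as the linear field $w\mapsto Aw$, so $[q,v]=-Av$.
\item Iterating the splitting identity gives $\mathrm{ad}_Q^m\tau(v)=\tau(\mathrm{ad}_q^m v)$. Hence the time-$s$ flow of $Q$ pushes $\tau(v)$ forward to $\tau(e^{sA}v)$.
\item Since this flow fixes $0$, it carries $\psi(v)$ to $\psi(e^{sA}v)$.
\item Differentiating at $s=0$ gives $\psi_{\ast}q=Q$, and $\sigma_0\tau=\id$ gives $d\psi_0=\id$.
\end{enumerate}
The paper instead does this verification combinatorially in $SV$ (Lemmas~\ref{lem:PNM}, \ref{lem:PNKQ}, \ref{lem:QNPNK}), which avoids formal flows and covers graded $V$. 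As written, your proposal carries out neither verification.
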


In the study of differential equations, transforming differential equations into simpler forms plays an important role. In particular, linearisation of vector fields in a neighbourhood of a singular point has been a classical topic; a comprehensive account can be found in the monograph of Arnold~\cite{MR695786}. 
For non-resonant vector fields, the situation is well understood across different regularity settings: Poincaré's theorem and Siegel's theorem in the holomorphic setting, and Sternberg's theorem~\cite{MR96854} in the smooth setting. 
In the presence of resonance, the linearisability of vector fields in the holomorphic and smooth settings are related to the linearisability of formal vector fields: Brjuno's theorem~\cite{MR377192} for the holomorphic setting and Chen's theorem~\cite{MR160010} for the smooth setting. These results highlight that, in the resonant case, the possibility of linearisation depends subtly on the structure of the nonlinear terms, thereby motivating the search for concrete and verifiable conditions on the nonlinear part of the vector field.

More recently, Basto-Gonçalves~\cite{MR2678982} approached the linearisation problem for resonant vector fields from a fresh perspective. He introduced the notion of admissibility for a formal vector field and proved that any admissible formal vector field is linearisable. 

Theorem~\ref{thm:A} provides another viewpoint on the formal linearisation problem. 
Geometrically, the $\widehat{S}(V^{\vee})$-linear extension of the splitting map $\tau$ induces a change of frame for the formal tangent bundle. It is well known in differential geometry that any change of frame for the tangent bundle arising from a change of coordinates must preserve the Lie algebra structure. Theorem~\ref{thm:A} bypasses this Lie algebraic constraint: the splitting map $\tau$ does not necessarily preserve the Lie bracket on $\XX(V)$. By viewing the linearisation problem through the lens of coalgebras, we provide an explicit and recursive construction of the formal coordinate transformation $\psi:V\to V$ directly from the splitting map $\tau$, which makes the criterion both conceptually transparent and computationally effective.
As a consequence, we obtain the Lie algebra isomorphism $\psi_{\ast}$---which is, in general, distinct from $\tau$---that linearises the vector field $Q$, satisfying $\psi_{\ast}(q)=Q$.

As an application, we present sufficient conditions to guarantee the existence of a splitting map, thus implying linearisability. As a particular instance, we recover one of Basto‑Gonçalves’ main results by showing that any admissible formal vector field satisfies our conditions.

We also treat the analogous linearisation problem for endomorphisms of formal manifolds. By introducing a corresponding splitting property for morphisms, we prove that an endomorphism satisfies the splitting property if and only if the endomorphism is linearisable.

Finally, after briefly reviewing Bandiera's result on $L_{\infty}[1]$ algebras, we then extend these ideas to $A_\infty[1]$ algebras. We introduce an analogue of the splitting property for $A_\infty[1]$ algebras, and we prove that an $A_\infty[1]$ algebra $A$ is linearisable (isomorphic as an $A_\infty[1]$ algebra to its underlying complex) if and only if there exists a map $\tau:A \to (\Hom(TA,A), D_\mu)$ satisfying the corresponding splitting property. Here, $(\Hom(TA,A), D_\mu)$ is the Hochschild cochain complex of $A$ with coefficients in $A^R=A$, the $A_{\infty}[1]$ $A$-$A$-bimodule obtained by the zero extension of the canonical right $A_{\infty}[1]$ $A$-module on $A$.

\medskip
\noindent
\textbf{Notation and conventions.}

Throughout this paper, let $\KK$ be a field of characteristic $0$. Unless otherwise specified, all vector spaces, linear maps, and tensor products are taken over $\KK$. All gradings are assumed to be $\ZZ$-gradings.
For graded vector spaces $V$ and $W$, we write $V \otimes W$ for their graded tensor product and $\Hom(V,W)$ for the space of graded linear maps. 
If $\dim V < \infty$ or $\dim W < \infty$, we often identify 
\[
\Hom(V,W) \cong V^\vee \otimes W.
\]

To handle signs arising from graded structures, we use the Koszul sign rule: interchanging two homogeneous elements $a$ and $b$ introduces a factor $(-1)^{|a||b|}$, where $|a|$ and $|b|$ denote their degrees.

For a graded vector space $V$, we denote by $T^n V = V^{\otimes n}$ its $n$-fold tensor power. The symmetric group $\mathbb{S}_n$ acts on $T^n V$ by
\[\sigma(v_1 \otimes \cdots \otimes v_n)
= \varepsilon(\sigma; v_1,\dots,v_n)\, v_{\sigma^{-1}(1)} \otimes \cdots \otimes v_{\sigma^{-1}(n)},\]
where $\varepsilon(\sigma; v_1,\dots,v_n)$ is the Koszul sign determined by permuting the homogeneous elements $v_1,\dots,v_n$. We define the symmetric power $S^n V := V^{\odot n}$ as the quotient space of $T^n V$ by this $\mathbb{S}_{n}$-action.

Finally, for a DG vector space, we adopt the cohomological degree convention: if $(V,d_{V})$ is a DG vector space, then the differential $d_{V}$ is an operator of degree $+1$.

\section{Linearisation of formal vector fields}\label{Section:Linearisation of formal vector fields}
\subsection{Main theorem}
A formal manifold (or a formal pointed manifold) is a finite-dimensional vector space $V$ whose algebra of functions is 
the algebra of formal power series on $V$:
\[ \widehat{S}(V^{\vee})=\prod_{k=0}^{\infty} S^{k}(V^{\vee}) \, . \]
A morphism $\psi:V\to W$ between formal manifolds $V$ and $W$
is defined as a morphism of associative algebras 
$\Psi:\widehat{S}(W^{\vee}) \to \widehat{S}(V^{\vee})$ continuous with respect to the $I$-adic topology---that is, for $I_{W}= 
\prod_{k=1}^{\infty} S^{k}(W^{\vee})$ and similarly for $V$, it satisfies
\[\Psi (I_{W})\subset  I_{V}\, .\]
We note that, geometrically, each morphism of formal manifolds $\psi:V\to W$ preserves the origin, i.e., $\psi(0)=0$. We also note that the associated algebra morphism $\Psi:\widehat{S}(W^{\vee}) \to \widehat{S}(V^{\vee})$ is uniquely determined by the sequence of linear maps $\psi_{k}:W^{\vee} \to S^{k}(V^{\vee})$ for $k\geq 1$.

We say $\psi:V\to W$ is a formal diffeomorphism if $\Psi$ is an isomorphism of associative algebras. When $V=W$, a formal diffeomorphism $\psi$ can be seen as a formal change of coordinates of $V$. Note that $\Psi$ is an isomorphism if and only if its linear component $\psi_{1}:W^{\vee}\to V^{\vee}$ is an isomorphism of vector spaces.

The space
\[\XX(V)=\Der(\widehat{S}(V^{\vee})) \cong \widehat{S}(V^{\vee})\tensor V\, \]
is naturally equipped with the Lie algebra structure $[-, - ]$ induced by the commutator on $\Der(\widehat{S}(V^{\vee}))$.
Elements of $\XX(V)$ are formal vector fields.
Each formal vector field $X\in \XX(V)$ has a natural decomposition 
\[X=X_{0}+X_{1}+\cdots \, ,\]
where $X_{k}\in S^{k}(V^{\vee})\tensor V$. We refer to $X_0$ as the constant part of $X$, and to $X_{1}$ as its linear part.
The space of all constant formal vector fields is denoted by $\XX_{\con}(V)$ and is naturally identified with $V\cong \tangentp{0}{V}$, where $T_{0}V$ denotes the fibre of the formal tangent bundle $TV$ of $V$ over the origin $0\in V$. 
We denote the projection to the constant formal vector fields by 
\[\sigma_{0}:\XX(V)\to \XX_{\con}(V)\cong V, \qquad X\mapsto X_{0} \, .\]
 Geometrically, $\sigma_{0}$ can be viewed as the evaluation map at the origin $0\in V$. 
We also denote
the space of all linear formal vector fields by $\XX_{\lin}(V)$. 

For brevity, we will often drop the adjective `formal' when referring to diffeomorphisms and vector fields.

A vector field $X$ with $\sigma_{0}(X)=0$ is called \textbf{linearisable} if there exists a diffeomorphism $\psi:V\to V$ such that the induced map $\psi_{\ast}:\XX(V)\to \XX(V)$ satisfies
\begin{equation}\label{eq:Linearisable}
	\psi_{\ast}(X)\in \XX_{\lin}(V) \subset \XX(V) \, .
\end{equation}

Given any vector field $X$,
it is straightforward to check that the commutator $[X_{1},\argument]:\XX(V)\to \XX(V)$ with the linear part $X_{1}$ stabilises $\XX_{\con}(V)\cong V$. That is, $[X_{1},-] : V\to V$.
Moreover, when $\sigma_{0}(X)=0$, the evaluation map $\sigma_{0}$ intertwines with $[X,-]$ and $[X_{1},-]$, so that the following diagram commutes:
\[
\begin{tikzcd}
	\XX(V) \arrow{r}{\sigma_{0}} \arrow[swap]{d}{[X,-]} & V \arrow{d}{[X_{1},-]}\\
	\XX(V)\arrow{r}{\sigma_{0}} & V
\end{tikzcd}
\]

\begin{definition}\label{defn:SP1}
Let $V$ be a formal manifold, and let $X$ be a formal vector field on $V$ satisfying $\sigma_{0}(X)=0$.
We say $X$ satisfies the \textbf{splitting property} if there exists a linear map $\tau:V\to \XX(V)$ such that
\begin{equation}
\sigma_{0} \circ \tau = \id_{V},\qquad [X,\tau(v)]=\tau([X_{1},v])
\end{equation}
for all $v\in V$. We call such $\tau$ a splitting map for $X$.
\end{definition}
We note that the $\widehat{S}(V^{\vee})$-linear extension of the splitting map $\tau$ can be viewed as an $\widehat{S}(V^{\vee})$-module isomorphism from $\XX(V)$ onto itself. Geometrically, the splitting map $\tau$ can be viewed as a change of frames for the formal tangent bundle $TV$.

The following is our main result.
\begin{theorem}\label{thm:First}
	Let $V$ be a formal manifold, and 
	let $X$ be a formal vector field satisfying $\sigma_{0}(X)=0$. 
	The following are equivalent.
	\begin{enumerate}[label=\rm{(\Roman*)}]
		\item \label{item:a}
		The formal vector field $X$ is linearisable. 
		\item \label{item:b}
		There exists a formal diffeomorphism $\psi:V\to V$ satisfying
		\[ \psi(0)=0, \quad  \tangent{\psi}|_{0}=\id_{\tangentp{0}{V}}, \quad \psi_{\ast} (X_{1}) = X \, ,\]
		where $X_{1}$ is the linear component of $X$. 
		\item \label{item:c}
		The formal vector field $X$ satisfies the splitting property.
	\end{enumerate}
\end{theorem}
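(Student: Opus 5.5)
I would prove the cycle (II)$\Rightarrow$(I)$\Rightarrow$(III)$\Rightarrow$(II).

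\textbf{The easy implications.} (II)$\Rightarrow$(I) is immediate: if $\psi_*(X_1)=X$, then $(\psi^{-1})_*X=X_1\in\XX_{\lin}(V)$.

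For (I)$\Rightarrow$(III), suppose $\phi$ is a diffeomorphism with $\phi_*X=Y\in\XX_{\lin}(V)$. Let $\chi=\phi^{-1}$, so $\chi_*Y=X$. Define $\tau_0:V\to\XX(V)$ by sending $v$ to the constant vector field $v$. For linear $Y$ one has $[Y,\tau_0(v)]=\tau_0([Y,v])$, since the bracket of a linear field with a constant field is constant. Now set $\tau'=\chi_*\circ\tau_0$. Because $\chi_*$ is a Lie algebra automorphism,
\[[X,\tau'(v)]=\chi_*[Y,\tau_0 v]=\tau'([Y,v]).\]
We also have $\sigma_0\tau'=T\chi|_0=:A$, which is invertible. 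Next, $\sigma_0$ intertwines $[X,-]$ with $[X_1,-]$, and $[Y,-]$ on constants is conjugated by $A$ to $[X_1,-]$ on constants. Indeed, the linear part of $X=\chi_*Y$ is $A_*Y$, since $\chi(0)=0$. So $\tau=\tau'\circ A^{-1}$ satisfies $\sigma_0\tau=\id$ and $[X,\tau v]=\tau([X_1,v])$. This gives (III).

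\textbf{The main implication (III)$\Rightarrow$(II).} Given a splitting map $\tau$, I would build $\psi$ degree by degree, using the coalgebra picture. Dualise $\Psi$ to a coalgebra automorphism $\Psi^\vee$ of $S(V)$, determined by its Taylor components $\psi_k:S^k(V)\to V$ with $\psi_1=\id$. The condition $\psi_*(X_1)=X$ is the coderivation identity $\Psi^\vee\circ X_1 = X\circ\Psi^\vee$, where $X$ and $X_1$ are viewed as coderivations of $S(V)$.

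The idea is to use $\tau$ to define the components $\psi_k$ explicitly. Write $\tau(v)=\sum_k\tau_k(v)$ with $\tau_k(v)\in S^k(V^\vee)\otimes V$; equivalently, $\tau_k$ is a map $S^k(V)\otimes V\to V$. Here $\tau_0=\id$. Define $\psi$ recursively by
\[\psi_{n+1}(v_0 v_1\cdots v_n)=\tfrac{1}{n+1}\sum \pm\,\bigl(\text{terms built from }\tau_j\text{ and lower }\psi_i\bigr).\]
This mimics the flow-box construction: $\psi$ should be the unique map whose differential at each point, applied to constant fields, gives $\tau$ "along $\psi$". Concretely, the requirement is
\[\psi_*(\tau_0(v))=\tau(v) \quad\text{to all orders,}\]
that is, $\partial_v\psi=\tau(v)\circ\psi$. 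This cannot hold exactly, because $\tau$ need not respect brackets. Instead I would impose only its symmetrised, radial version, $E\psi = (\tau\circ\psi)(\psi\text{-radial})$. Here $E$ is the Euler field, $\sum_i x^i\partial_i\psi = \sum_i x^i\,\tau(e_i)(\psi(x))$ evaluated appropriately. This is a well-defined ODE in the formal grading, and $(n+1)$ can be divided out in characteristic $0$. It yields a unique $\psi$ with $\psi_1=\id$.

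The crux is then to verify $\psi_*X_1=X$. Set $Z=\psi_*X_1-X$. By construction $Z$ has no constant or linear part. Using the intertwining relation for $\tau$ together with the radial defining equation, I would show that $[E_\psi,Z]$ is expressible through $Z$ itself in a way that raises polynomial degree. Here $E_\psi=\psi_*E$ is the pushforward Euler field, and the key identity is a commutation between the radial field $\sum x^i\tau(e_i)$ and $X$, derived from $[X,\tau v]=\tau[X_1,v]$ and $[X_1,E]=0$. An induction on polynomial degree then gives $Z=0$.

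I expect this last verification to be the main obstacle. It involves checking the exact form of the recursion and the signs in the graded setting, since $\tau$ fails to be a Lie map. If the Euler-field approach fails, I would instead run a direct degree-by-degree induction: assume $(\psi_*X_1-X)$ vanishes below degree $n$, and show that the degree-$n$ defect is killed by the choice of $\psi_{n+1}$ dictated by $\tau$.
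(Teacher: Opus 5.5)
Your cycle of implications is sound, and your map $\psi$ coincides with the paper's. Written as a generating function, the recursion~\eqref{eq:PN1} is exactly your radial equation $D\psi(x)[x]=\sum_i x^i\,\tau(e_i)(\psi(x))$. Your (I)$\Rightarrow$(III) is the paper's (II)$\Rightarrow$(III) construction $\tau(v)\cdot\bfv=\Phi(v\odot\Phi^{-1}(\bfv))$, i.e.\ pushforward of constant fields, plus the harmless normalisation by $A^{-1}$.

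The difference lies in verifying $\psi_{*}X_1=X$, and there you only announce the key identity. It does hold, but not quite in the form you describe. Write $X_1(x^i)=(Lx)^i$, so that $[X_1,v]=-Lv$ on constant fields. Put $\xi^i=x^i\circ\psi^{-1}$ and $Z=\psi_{*}X_1-X$. The radial equation says $\psi_{*}E=\sum_i \xi^i\,\tau(e_i)$, with coefficients $\xi^i$, not $x^i$. Then $[\psi_{*}E,\psi_{*}X_1]=\psi_{*}[E,X_1]=0$, and
\[
[\psi_{*}E,X]=\sum_i \xi^i\,[\tau(e_i),X]-\sum_i X(\xi^i)\,\tau(e_i)
=\sum_i (L\xi)^i\,\tau(e_i)-\sum_i \big((L\xi)^i-Z(\xi^i)\big)\tau(e_i).
\]
The first sum uses the splitting relation $[\tau(e_i),X]=-\tau([X_1,e_i])=\tau(Le_i)$. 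The second uses $X(\xi^i)=(\psi_{*}X_1)(\xi^i)-Z(\xi^i)=(L\xi)^i-Z(\xi^i)$. Hence
\[
[\psi_{*}E,Z]=-\sum_i Z(\xi^i)\,\tau(e_i).
\]
This is not degree-raising: its leading term is $-Z$. Let $Z_k$ be the lowest nonzero homogeneous component; $k\ge 1$ because $Z$ vanishes at $0$. The corrections $\psi_{*}E-E$, $\xi^i-x^i$ and $\tau(e_i)-\partial_i$ contribute only in degree $>k$. So the degree-$k$ part of the identity reads $(k-1)Z_k=-Z_k$, i.e.\ $kZ_k=0$, and $Z=0$ in characteristic $0$. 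Your Euler-field plan therefore closes the argument, and the fallback is unnecessary.

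Compared with the paper: it verifies $\phi_{n+1}\widetilde{q}_1=\sum_k q_k\phi^k_{n+1}$ by a direct induction on arity inside the coalgebra $SV$. That needs the convolution identities of Lemmas~\ref{lem:PNM}, \ref{lem:PNKQ} and~\ref{lem:QNPNK}, but works verbatim for graded, possibly infinite-dimensional $V$ (Theorem~\ref{thm:First-1}). Your argument is instead a uniqueness statement for an Euler-type equation. It is much shorter, explains the factor $\frac{1}{n+1}$ as inverting the Euler operator, and shows that only $\sigma_0\tau=\id$, the splitting relation and $Z(0)=0$ are used. On the other hand, it is phrased with coordinates and the finite frame $\{\tau(e_i)\}$, so it proves Theorem~\ref{thm:First} as stated. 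Transporting it to the setting of Theorem~\ref{thm:First-1} would require recasting $\psi_{*}E$ as a coderivation of $SV$ and tracking Koszul signs.
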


\begin{remark}
As noted in the introduction, the induced map $\psi_{\ast}:\XX(V)\to \XX(V)$ must preserve the Lie algebra structure on $\XX(V)$, while the $\widehat{S}(V^{\vee})$-linear extension of the splitting map $\tau$ does not necessarily preserve the Lie algebra structure.
\end{remark}

The equivalence between \ref{item:a}~and~\ref{item:b} is standard.

We prove \ref{item:b}~$\Leftrightarrow$~\ref{item:c} in a slightly more general setting using the language of graded coalgebras: the underlying space $V$ is a (not necessarily finite-dimensional) graded vector space. However, for Theorem~\ref{thm:First}, we will always assume that $V$ is finite dimensional; this discrepancy arises due to the fact that, unlike the finite-dimensional case, not all derivations on $\widehat{S}(V^{\vee})$ arise as the dual of coderivations on $SV$ if $\dim V=\infty$.

Now, let $V$ be a (not necessarily finite-dimensional) graded vector space. 
As described in Section~\ref{sec:coAlgSV}, the graded space $SV$ equipped with $\Delta:SV\to SV\tensor SV$ is the symmetric coalgebra of $V$. 
Let $\Phi:SV\to SV$ be an endomorphism of the graded coalgebra $SV$. Then $\Phi$ is uniquely determined by $\phi_{k}:S^{k}(V)\to V$, for $k\geq 1$.
Similarly, let $Q$ be a coderivation on $SV$. 
Then $Q$ is uniquely determined by the sequence $\{q_{n}\}_{n\geq 0}$ of maps $q_{n}:S^{n}(V)\to V$. 
Denote by $\widetilde{q}_{n}$ the coderivation on $SV$ determined by $q_{n}:S^{n}(V)\to V$. 
Then $Q=\widetilde{q}_{0}+\widetilde{q}_{1}+\cdots$.

When $V$ is a finite-dimensional ordinary vector space (viewed as a graded vector space concentrated in degree $0$), Theorem~\ref{thm:First-1} below proves \ref{item:b}~$\Leftrightarrow$~\ref{item:c} of Theorem~\ref{thm:First}. 
Indeed, the diffeomorphism $\psi$ in Theorem~\ref{thm:First}~\ref{item:b} is obtained by setting $\Psi=\Phi^{\transpose}$ as the dual of the isomorphism $\Phi$ in Theorem~\ref{thm:First-1}~\ref{item:A} below, and Theorem~\ref{thm:First}~\ref{item:c} is equivalent to Theorem~\ref{thm:First-1}~\ref{item:B} below, under the identification
\[\Der(\widehat{S}(V^{\vee})) \cong\widehat{S}(V^{\vee})\tensor V \cong  \coDer (SV) \, .\]

\begin{theorem}\label{thm:First-1}
Let $V$ be a graded vector space, and let $Q$ be a coderivation on $SV$ satisfying $q_{0}=0$.
With the notation above, the following are equivalent.
\begin{enumerate}[label=\rm{(\Roman*)}]
	\item There exists an isomorphism of graded coalgebras
	\[ \Phi:SV\to SV\]
	such that 
	$\phi_{1}=\id_{V}$ 
	and
	$Q\circ \Phi = \Phi \circ \widetilde{q}_{1}$. \label{item:A}
	\item There exists a degree-preserving linear map $\tau:V\to \coDer(SV)$ such that $\tau(v)\cdot 1 = v$ and 
	\[
	\tau(q_{1}(v)) \cdot \bfv =\liederivative{Q}\big(\tau(v)\big) \cdot \bfv
	\]
	for all $v\in V$ and $\bfv \in SV$.
	\label{item:B}
\end{enumerate}
\end{theorem}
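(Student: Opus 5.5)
For \ref{item:A}~$\Rightarrow$~\ref{item:B} I would transport constant coderivations along $\Phi$. For $v\in V$, let $\widetilde{v}\in\coDer(SV)$ be the coderivation determined by the single component $S^{0}V\to V$, $1\mapsto v$. Since $\Phi$ is a coalgebra automorphism, $\tau(v):=\Phi\circ\widetilde{v}\circ\Phi^{-1}$ is again a coderivation, of the same degree as $v$. Because $\Phi(1)=1$ and $\phi_{1}=\id_V$, we get $\tau(v)\cdot 1=\Phi(v)=v$. From $Q\Phi=\Phi\widetilde{q}_{1}$ it follows that
\[
\liederivative{Q}\tau(v)=[Q,\Phi\widetilde{v}\Phi^{-1}]=\Phi[\widetilde{q}_{1},\widetilde{v}]\Phi^{-1}.
\]
A direct computation of the bracket of a linear and a constant coderivation shows $[\widetilde{q}_{1},\widetilde{v}]=\widetilde{q_{1}(v)}$, up to the sign convention already built into $\liederivative{Q}$. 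Hence $\liederivative{Q}\tau(v)=\tau(q_{1}(v))$, even as an identity of coderivations.

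For \ref{item:B}~$\Rightarrow$~\ref{item:A} I would define $\Phi$ recursively on word length by an Euler-type averaging. Set $\Phi(1)=1$ and
\[
\Phi(v_{1}\odot\cdots\odot v_{n})=\frac1n\sum_{i=1}^{n}\pm\,\tau(v_{i})\cdot\Phi(v_{1}\odot\cdots\widehat{v_{i}}\cdots\odot v_{n}),
\]
with Koszul signs for moving $v_i$ to the front. The right-hand side is graded-symmetric and multilinear, so $\Phi$ is well defined on $S^{n}V$. By construction the identity $n\Phi(x)=\sum_i\pm\tau(v_i)\Phi(x_i)$ then holds for every way of writing $x$ as a symmetric word, with $x_i$ the word with $v_i$ removed.

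\emph{Coalgebra morphism.} I would prove $\Delta\Phi=(\Phi\otimes\Phi)\Delta$ by induction on $n$. Since each $\tau(v_i)$ is a coderivation,
\[
\Delta\bigl(n\Phi(x)\bigr)=\sum_i\pm\bigl(\tau(v_i)\otimes1+1\otimes\tau(v_i)\bigr)(\Phi\otimes\Phi)\Delta(x_i).
\]
Now regroup by the unshuffle terms $x'\otimes x''$ of $\Delta x$, where $x'$ has $p$ letters, $x''$ has $q$ letters and $p+q=n$. The recursion gives $p\Phi(x')=\sum_{i\in x'}\pm\tau(v_i)\Phi(x'_i)$, and similarly for $x''$. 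The contributions therefore add up to $(p+q)\,\Phi(x')\otimes\Phi(x'')$. The boundary terms with $p=0$ or $q=0$ use $\tau(v)\cdot1=v$ and $\Phi(1)=1$. This counting is the step where sign bookkeeping is heaviest, and I expect it to be the main technical obstacle. The underlying idea is simply that averaging over which letter is removed is compatible with deconcatenation.

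\emph{Invertibility and intertwining.} The component $\phi_1$ is $\tau(v)\cdot1=v$, so $\phi_1=\id_V$. The coalgebra map is therefore triangular with respect to the word-length filtration, hence an isomorphism. For $Q\Phi=\Phi\widetilde{q}_1$, I would induct on $n$. Using the hypothesis $Q\tau(v)=\liederivative{Q}\tau(v)\pm\tau(v)Q$ applied to the element $\Phi(x_i)$, together with the induction hypothesis $Q\Phi(x_i)=\Phi\widetilde{q}_1(x_i)$, we get
\[
nQ\Phi(x)=\sum_i\pm\bigl(\tau(q_1(v_i))\Phi(x_i)\pm\tau(v_i)\Phi(\widetilde{q}_1x_i)\bigr).
\]
Since $\widetilde{q}_1x=\sum_i\pm q_1(v_i)\odot x_i$ is again a sum of words of length $n$, the recursion identity applied to each of these words shows that the right-hand side equals $n\Phi(\widetilde{q}_1x)$. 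Here $q_0=0$ is needed so that $Q$ does not raise word length, which makes the induction legitimate. Dividing by $n$ (characteristic $0$) completes the proof.
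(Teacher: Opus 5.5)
Your proposal is correct. The direction (I)~$\Rightarrow$~(II) is the paper's own argument. Since $\widetilde{v}(\bfw)=v\odot\bfw$, your $\tau(v)=\Phi\widetilde{v}\Phi^{-1}$ is literally the paper's $\tau(v)\cdot\bfv=\Phi(v\odot\Phi^{-1}(\bfv))$, and the paper's computation is your identity $[\widetilde{q}_{1},\widetilde{v}]=\widetilde{q_{1}(v)}$, which in fact holds with no extra sign.

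For (II)~$\Rightarrow$~(I) your route is genuinely different. The paper prescribes only the corestrictions $\phi_{n+1}(\bfv)=\frac{1}{n+1}\sum_{i}\sign_{i}\sum_{k=1}^{n}\tau_{k}^{v_{i}}\phi_{n}^{k}(\bfv^{\{i\}})$, which is exactly the $V$-component of your recursion. The coalgebra-morphism property then comes for free from cofreeness. The price is that the intertwining must be verified on Taylor coefficients, $\phi_{n+1}\widetilde{q}_{1}=\sum_{k}q_{k}\phi_{n+1}^{k}$, after expanding the splitting identity into components. That needs Lemmas~\ref{lem:PNKQ} and~\ref{lem:QNPNK} and above all Lemma~\ref{lem:PNM}, $n\,\phi_{n}^{m}=\sum_{r}r\,(\phi_{r}\conv\phi_{n-r}^{m-1})$. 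Together with the recursion for the $\phi_{r}$, the latter says precisely that the cofree extension satisfies your recursion in every component $S^{m}V$ (cf.~\eqref{eq:rPLUS1}).

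You instead impose the recursion on all of $SV$, so you must prove the coalgebra property by hand; your unshuffle count $p+q=n$ is the counterpart of Lemma~\ref{lem:PNM}. In exchange, the intertwining collapses to two lines. It uses only $Q\tau(v)=\tau(q_{1}(v))+(-1)^{\degree{Q}\degree{v}}\tau(v)Q$ and the fact that $\widetilde{q}_{1}$ preserves word length and is a derivation of $\odot$. Since a coalgebra map is determined by its corestriction, both constructions produce the same $\Phi$, so you also recover the explicit formula of the Corollary. Your argument also transfers almost verbatim to the theorem on linearisation of mappings, with the Lie-derivative identity replaced by $F\circ\tau(v)=\tau(f_{1}(v))\circ F$. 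The paper's version, for its part, works throughout with the coefficients $\phi_{n}$, the form in which $L_{\infty}[1]$ and $A_{\infty}[1]$ morphisms are usually handled, and it parallels the paper's $A_{\infty}[1]$ proof.

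A few minor points.
\begin{itemize}
\item The sign bookkeeping you flag becomes painless if you write the recursion as $\Phi N=m_{\tau}\circ(\id_{V}\tensor\Phi)\circ\nabla$. Here $N$ is the word-length operator, $\nabla=(\pr_{V}\tensor\id)\Delta$, and $m_{\tau}(v\tensor\bfw)=\tau(v)\cdot\bfw$. Coassociativity gives $(\nabla\tensor\id)\Delta=(\id_{V}\tensor\Delta)\nabla$, and cocommutativity gives $(\id\tensor\nabla)\Delta=(\tw\tensor\id)(\id_{V}\tensor\Delta)\nabla$. Combined with $\Delta N=(N\tensor\id+\id\tensor N)\Delta$, your regrouping becomes an identity of maps. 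Likewise, $\nabla\widetilde{q}_{1}=(q_{1}\tensor\id+\id_{V}\tensor\widetilde{q}_{1})\nabla$ handles the intertwining step.
\item The hypothesis $q_{0}=0$ is not what legitimises the induction: your inductive step never uses that $Q$ does not raise word length. It enters only in the base case $Q\Phi(1)=Q(1)=0=\Phi\widetilde{q}_{1}(1)$.
\item You should record compatibility with the counit. Applying $\epsilon\tensor\epsilon$ to the coderivation identity shows $\epsilon D=0$ for every coderivation $D$, so $\epsilon\Phi=\epsilon$.
\end{itemize}
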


As a graded coalgebra counterpart of Definition~\ref{defn:SP1}, we say the graded coderivation $Q$ satisfies the \textbf{splitting property} if it satisfies Theorem~\ref{thm:First-1}~\ref{item:B}. Such $\tau$ is called the splitting map for $Q$.

To prove Theorem~\ref{thm:First-1}, we need some preparation. 
Denote by $\phi_{n}^{k}$ the composition of $\Phi$ with the natural inclusion and projection
\[
\phi_{n}^{k}:S^{n}V\into SV \xto{\Phi} SV \onto S^{k}V \, ,
\]
and in particular, $\phi_n:=\phi_n^1$.
Then Theorem~\ref{thm:First-1}~\ref{item:A} is equivalent to $\phi_{1}=\id_{V}$ and 
\begin{equation}\label{eq:PhiQ}
	\phi_{n} \widetilde{q}_{1}=\sum_{k=1}^{n} q_{k}\phi_{n}^{k}
\end{equation}
for all $n\geq 1$.

Using the identification $\coDer(SV)\cong \Hom(SV,V)$, we may write $\tau=\sum_{n=0}^{\infty} \tau_{n}$ where, for each $n\geq 0$,
\[\tau_{n}\in \Hom(V\tensor S^{n}(V), V).\]
For simplicity of notation, we assume that $Q$ is homogeneous, and
for each $v\in V$ and $\bfv \in S^{n}(V)$, we write
$\tau_{n}(v\tensor \bfv )=\tau_{n}^{v}(\bfv)$. Also, we denote by $\widetilde{\tau}_{n}^{v}$ the coderivation on $SV$ determined by $\tau_{n}^{v}:S^{n}(V)\to V$.
It is straightforward to check that Theorem~\ref{thm:First-1}~\ref{item:B} is equivalent to $\tau^{v}_{0}(1)=v$ and

\begin{equation}\label{eq:Qn}
	\tau^{q_{1}(v)}_{n}(\bfv)
	= q_{n+1}(v\odot \bfv) +\sum_{k=1}^{n} 
	\big(q_{n-k+1}(\widetilde{\tau}^{v}_{k}(\bfv)) -(-1)^{\degree{v}\degree{Q}}\tau^{v}_{k}(\widetilde{q}_{n-k+1}(\bfv))\big)
\end{equation}
for all $n \geq 1$. 
Using Sweedler's notation $\Delta \bfv= \bfv_{(1)}\tensor \bfv_{(2)}$,
Eq.~\eqref{eq:Qn} can be written as
\[\tau^{q_{1}(v)}_{n}(\bfv)
= q_{n+1}(v\odot \bfv) +\sum_{k=1}^{n}
\big(q_{n-k+1}(\tau^{v}_{k}(\bfv_{(1)})\odot \bfv_{(2)}) -(-1)^{\degree{v}\degree{Q}}\tau^{v}_{k}(q_{n-k+1}(\bfv_{(1)})\odot \bfv_{(2)})\big) \, .
\]
Now we are ready to prove Theorem~\ref{thm:First-1}.

\begin{proof}[Proof of Theorem~\ref{thm:First-1}]
	Assume that~\ref{item:A} holds. Define $\tau:V\to \coDer(SV)$ by
	\[ \tau(v) \cdot \bfv = \Phi(v\odot \Phi^{-1}(\bfv)) \]
	for $v\in V$ and $\bfv \in SV$. Since $\Phi(1)=1$ and $\Phi(v)=v$, we have 
	$\tau(v):1\mapsto v$. Moreover, since 
	\[q_{1}(v)\odot \Phi^{-1}(\bfv) = \widetilde{q}_{1}( v\odot \Phi^{-1}(\bfv) ) -(-1)^{\degree{Q}\degree{v}} v\odot \widetilde{q}_{1}(\Phi^{-1}(\bfv)) \]
	for $\bfv \in SV$, we have
	\begin{eqnarray*}
		\liederivative{Q}(\tau(v)) \cdot \bfv 
		&= &Q(\Phi(v\odot \Phi^{-1}(\bfv))) - (-1)^{\degree{Q}\degree{v}} \Phi(v\odot \Phi^{-1}(Q(\bfv))) \\
		&=& \Phi \big(\widetilde{q}_{1}(v\odot \Phi^{-1}(\bfv)) - (-1)^{\degree{Q}\degree{v}} v\odot \widetilde{q}_{1}(\Phi^{-1}(\bfv))\big) \\
		&=& \Phi(q_{1}(v)\odot \Phi^{-1}(\bfv)) \\
		&=& \tau(q_{1}(v)) \cdot \bfv \, ,
	\end{eqnarray*}
	hence obtain~\ref{item:B}.
	
	Conversely, assume that \ref{item:B} holds.
	Let $\phi_{1}(v)=v$ for $v\in V$ and, for $n\geq 1$,
	\begin{equation}\label{eq:PN1}
		\phi_{n+1}(\bfv )=\frac{1}{n+1} \sum_{i}\sum_{k=1}^{n} \sign_{i} \cdot \tau_{k}^{v_{i}}\phi_{n}^{k}(\bfv^{\{i\}}) \, .
	\end{equation}
	where $\bfv=v_{1}\odot \cdots \odot v_{n+1}\in S^{n+1}(V)$ and $\sign_{i}=(-1)^{\degree{v_{i}}(\degree{v_{1}}+\cdots+\degree{v_{i-1}})}$.
	In particular, for $v, w \in V$, we have $\phi_{2}(v\odot w)=\frac{1}{2} (\tau_{1}^{v}(w)+(-1)^{\degree{v}\degree{w}}\tau_{1}^{w}(v))$.
	
	We use induction on $n$. Clearly, since $\phi_{1}=\id_{V}$, we have $\phi_{1}q_{1}=q_{1}\phi_{1}$.
	Assume that 
	\begin{equation}\label{eq:PhiQAs}
		\phi_{r} \widetilde{q}_{1}=\sum_{k=1}^{r}q_{k}\phi_{r}^{k}
	\end{equation}
	holds for all $r\leq n$. By~\eqref{eq:PhiQ}, it suffices to prove that $\phi_{n+1} \widetilde{q}_{1}=\sum_{k=1}^{n+1}q_{k}\phi_{n+1}^{k}$.
	
	By~\eqref{eq:PN1}, we have
	\begin{eqnarray*}
		&&( \phi_{n+1}\widetilde{q}_{1}-q_{1}\phi_{n+1} )(\bfv) \\
		&= &\frac{1}{n+1} \sum_{i}\sum_{k=1}^{n}\sign_{i} \cdot
		\big(\tau_{k}^{q_{1}(v_{i})}\phi_{n}^{k}(\bfv^{\{i\}})
		+(-1)^{\degree{v_{i}}\degree{Q}} \tau_{k}^{v_{i}}\phi_{n}^{k} \widetilde{q}_{1}(\bfv^{\{i\}})
		- q_{1}\tau_{k}^{v_{i}}\phi_{n}^{k}(\bfv^{\{i\}}) \big) \\
		&=& \frac{1}{n+1} \sum_{i}\sum_{k=1}^{n}\sign_{i} \cdot
		\big((\tau_{k}^{q_{1}(v_{i})}
		+(-1)^{\degree{v_{i}}\degree{Q}}\tau_{k}^{v_{i}} \widetilde{q}_{1} - q_{1}\tau_{k}^{v_{i}})\phi_{n}^{k}(\bfv^{\{i\}}) + 
		(-1)^{\degree{v_{i}}\degree{Q}}\tau_{k}^{v_{i}}(\phi_{n}^{k} \widetilde{q}_{1} - \widetilde{q}_{1}\phi_{n}^{k})(\bfv^{\{i\}})\big) .
	\end{eqnarray*}
	By assumption, we use~\eqref{eq:Qn} to obtain
	\[ (\tau_{1}^{q_{1}(v_{i})}+(-1)^{\degree{v_{i}}\degree{Q}}\tau_{1}^{v_{i}}q_{1} - q_{1}\tau_{1}^{v_{i}})\phi_{n}(\bfv^{\{i\}})=q_{2}(v_{i}\odot \phi_{n}(\bfv^{\{i\}}))\]
	and, for $k\geq 2$, 
	\begin{eqnarray*}
		&&(\tau_{k}^{q_{1}(v_{i})}+(-1)^{\degree{v_{i}}\degree{Q}}\tau_{k}^{v_{i}}\widetilde{q}_{1} - q_{1}\tau_{k}^{v_{i}})\phi_{n}^{k}(\bfv^{\{i\}}) 
		\\
		&=&q_{k+1}(v_{i}\odot \phi_{n}^{k}(\bfv^{\{i\}}))
		+\sum_{m=2}^{k} \big( q_{m}\widetilde{\tau}_{k-m+1}^{v_{i}}\phi_{n}^{k}(\bfv^{\{i\}}) 
		-(-1)^{\degree{v_{i}}\degree{Q}}\tau_{k-m+1}^{v_{i}} \widetilde{q}_{m}\phi_{n}^{k}(\bfv^{\{i\}}) \big) \, . 
	\end{eqnarray*}

	Using the notation from Appendix~\ref{sec:coAlgSV}, we have
	\begin{eqnarray*}
		\sum_{k=2}^{n}\sum_{m=2}^{k}\tau_{k-m+1}^{v_{i}}\widetilde{q}_{m}\phi_{n}^{k}
		&=&\sum_{l=1}^{n-1}\sum_{m=2}^{n-l+1}\tau_{l}^{v_{i}}\widetilde{q}_{m}\phi_{n}^{m+l-1} \\
		&=&\sum_{l=1}^{n-1}\tau_{l}^{v_{i}} \Big( \sum_{m=2}^{n-l+1} \sum_{r=m}^{n-l+1} q_{m}\phi_{r}^{m}\conv \phi_{n-r}^{l-1}\Big)\\
		&=&\sum_{l=1}^{n-1}\tau_{l}^{v_{i}} \Big( \sum_{r=2}^{n-l+1}\sum_{m=2}^{r}q_{m}\phi_{r}^{m}\conv \phi_{n-r}^{l-1}\Big)
		\\
		&=&\sum_{l=1}^{n-1}\tau_{l}^{v_{i}} \Big( \sum_{r=1}^{n-l+1}(\phi_{r}\widetilde{q}_{1}-q_{1}\phi_{r})\conv \phi_{n-r}^{l-1}\Big)\\
		&=&\sum_{k=1}^{n} \tau_{k}^{v_{i}}(\phi_{n}^{k}\widetilde{q}_{1}-\widetilde{q}_{1}\phi_{n}^{k}) \, .
	\end{eqnarray*}
	where the third equality holds by Lemma~\ref{lem:QNPNK}, the fifth equality holds by assumption~\eqref{eq:PhiQAs} and $\phi_{1}=\id_{V}$, and the last equality holds by $\phi_{n}^{n}=\id_{S^{n}(V)}$, Lemma~\ref{lem:PNKQ} and~\ref{lem:QNPNK}.

	Combining these, we have shown that
	\[
	( \phi_{n+1}\widetilde{q}_{1}-q_{1}\phi_{n+1} )(\bfv) = 
	\frac{1}{n+1} \sum_{i}\sign_{i} \cdot
	\Big\{ \sum_{k=1}^{n} q_{k+1}(v_{i}\odot \phi_{n}^{k}(\bfv^{\{i\}}))+\sum_{k=2}^{n}\sum_{m=2}^{k}q_{m}\widetilde{\tau}_{k-m+1}^{v_{i}}\phi_{n}^{k}(\bfv^{\{i\}})\Big\}  \, .
	\]
	
	Note that 
	\[
	\sum_{k=2}^{n} \sum_{m=2}^{k}
	q_{m}\widetilde{\tau}_{k+1-m}^{v_{i}}\phi_{n}^{k}(\bfv^{\{i\}}) 
	= \sum_{m=2}^{n} \sum_{l=1}^{n-m+1} q_{m}\widetilde{\tau}^{v_{i}}_{l}\phi^{l+m-1}_{n}(\bfv^{\{i\}}) \, ,
	\]
	and for each fixed $m$, we have
	\begin{equation}\label{eq:rPLUS1}
		\begin{aligned}
		\sum_{i}\sum_{l=1}^{n-m+1}\sign_{i} \cdot  \widetilde{\tau}_{l}^{v_{i}} \phi^{l+m-1}_{n}(\bfv^{\{i\}})
		&=\sum_{i}\sum_{l=1}^{n-m+1} \sum_{r=l}^{n-m+1} 
		\sign_{i}\cdot (\tau_{l}^{v_{i}}\phi_{r}^{l}\conv \phi_{n-r}^{m-1})(\bfv^{\{i\}})\\
		&= \sum_{i}\sum_{r=1}^{n-m+1}\sum_{l=1}^{r} 
		\sign_{i} \cdot (\tau_{l}^{v_{i}}\phi_{r}^{l} \conv \phi_{n-r}^{m-1})(\bfv^{\{i\}})\\
		& = \sum_{r=1}^{n-m+1}(r+1)\cdot (\phi_{r+1}\conv \phi_{n-r}^{m-1})(\bfv)
	\end{aligned}
\end{equation}
	where the first equality holds by applying the coderivation $\tau_{l}^{v_{i}}$ (in place of $q_{l}$) to Lemma~\ref{lem:QNPNK}, and the last equality holds by the relation~\eqref{eq:PN1}.

	Together with the relation
	\[\sum_{i} \sign_{i} \cdot q_{k+1}(v_{i}\odot \phi_{n}^{k}(\bfv^{\{i\}})) = q_{k+1}(\phi_{1}\conv \phi_{n}^{k})(\bfv),\]
	Lemma~\ref{lem:PNM} implies that
	\begin{eqnarray*}
		( \phi_{n+1}\widetilde{q}_{1}-q_{1}\phi_{n+1} )(\bfv) &=&
		\frac{1}{n+1} \Big\{ \sum_{k=1}^{n}q_{k+1}(\phi_{1}\conv \phi_{n}^{k}) + \sum_{m=2}^{n} \sum_{r=1}^{n-m+1}(r+1)\cdot (\phi_{r+1}\conv \phi_{n-r}^{m-1})\Big \} (\bfv)\\
		&=& \frac{1}{n+1} \sum_{m=2}^{n+1} \sum_{r=0}^{n-m+1}q_{m}((r+1)\cdot \phi_{r+1}\conv \phi_{n-r}^{m-1})(\bfv) \\
		&=& \sum_{m=2}^{n+1}q_{m} \phi_{n+1}^{m}(\bfv) \, .
	\end{eqnarray*}
	This completes the proof.
\end{proof}

We emphasise that the linearising isomorphism $\Phi$ from Theorem~\ref{thm:First-1} can be constructed explicitly.
\begin{corollary}
With the notation from Theorem~\ref{thm:First-1} and the preceding paragraphs, assume that the splitting map $\tau:V\to \coDer(SV)$ for $Q$ is given. Then the coalgebra isomorphism $\Phi=(\phi_{1},\phi_{2},\ldots):SV\to SV$ that linearises $Q$ can be constructed by an explicit recursive formula
$\phi_{1}=\id_{V}$ and for $n\geq 1$,
\[\phi_{n+1}(\bfv )=\frac{1}{n+1} \sum_{i}\sum_{k=1}^{n} \sign_{i} \cdot \tau_{k}^{v_{i}}\phi_{n}^{k}(\bfv^{\{i\}}) \]
where $\bfv=v_{1}\odot \cdots \odot v_{n+1}\in S^{n+1}(V)$ and $\sign_{i}=(-1)^{\degree{v_{i}}(\degree{v_{1}}+\cdots+\degree{v_{i-1}})}$.
\end{corollary}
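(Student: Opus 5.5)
This corollary essentially repackages the converse direction (II)$\Rightarrow$(I) in the proof of Theorem~\ref{thm:First-1}, so the plan is to extract from that proof what still has to be checked. Given a splitting map $\tau$, we define $\phi_1=\id_V$ and $\phi_{n+1}$ by the displayed recursion, which is exactly \eqref{eq:PN1}. The recursion is well founded: $\phi_n^k$ for $k\ge 2$ is built from $\phi_1,\dots,\phi_{n-k+1}$ via the convolution $\conv$ of Appendix~\ref{sec:coAlgSV}, so $\phi_{n+1}$ only involves previously constructed components. The sequence $(\phi_1,\phi_2,\dots)$ therefore determines a unique graded coalgebra endomorphism $\Phi$ of $SV$, by the universal property of the cofree cocommutative (conilpotent) coalgebra $SV$. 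Since the symbol $\sign_i$ and the sum over $i$ symmetrise the expression, each $\phi_{n+1}$ is indeed a well-defined map on $S^{n+1}(V)$.

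Next I would show that $\Phi$ is an isomorphism. Its matrix $(\phi_n^k)$ is upper triangular with respect to the word-length filtration, with diagonal $\phi_n^n=\phi_1^{\odot n}=\id_{S^nV}$. Hence $\Phi-\id$ strictly lowers length, and the inverse $\Phi^{-1}=\sum_{j\ge0}(\id-\Phi)^j$ is a locally finite sum on each $S^nV$. This inverse is automatically a coalgebra map.

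Finally, the intertwining $Q\circ\Phi=\Phi\circ\widetilde q_1$ is equivalent to \eqref{eq:PhiQ}. This was established, for precisely this $\Phi$, by the induction in the proof of Theorem~\ref{thm:First-1}, using \eqref{eq:Qn}, Lemmas~\ref{lem:QNPNK}, \ref{lem:PNKQ}, \ref{lem:PNM} and the key identity \eqref{eq:rPLUS1}. I would cite that computation directly. The only potential obstacle is confirming that the induction used nothing beyond the recursion and the splitting identities, that is, no choice of $\Phi$ made elsewhere. Inspecting the argument confirms this: every step is driven by \eqref{eq:PN1} and \eqref{eq:Qn} alone. So the $\Phi$ defined explicitly here is the linearising isomorphism.
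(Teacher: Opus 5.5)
Your proposal is correct and matches the paper: the corollary restates the construction in the (II)$\Rightarrow$(I) direction of Theorem~\ref{thm:First-1}. There $\Phi$ is defined by \eqref{eq:PN1}, and the intertwining \eqref{eq:PhiQ} is proved by exactly the induction you cite. Your extra checks are details the paper leaves implicit, and they are sound: well-definedness on $S^{n+1}(V)$, since the symmetrised expression factors through the $(1,n)$-component of $\Delta$, and invertibility of $\Phi$ from $\phi_1=\id_V$ by unitriangularity.
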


\subsection{Linearisation of formal vector fields}

In this section, we present sufficient conditions to guarantee the existence of a splitting map in Theorem~\ref{thm:First}. Then, we show that any admissible formal vector field, introduced by Basto--Gonçalves~\cite{MR2678982}, satisfies these sufficient conditions.

Let $V$ be a formal manifold, and
let $X$ be a formal vector field on $V$ with $\sigma_{0}(X)=0$. 
Thus, we may write
$X=X_{1}+X_{2}+\cdots$.
For the space $\Hom(V, \XX(V))\cong V^{\vee}\tensor \XX(V)$, we introduce the following two operations: for all $F\in \Hom(V, \XX(V))$ and $v\in V$, we define
\begin{enumerate}
\item the Lie derivative $\tliederivative{X_{1}}:\Hom(V, \XX(V)) \to \Hom(V, \XX(V))$ along $X_{1}$
by
\[ \tliederivative{X_{1}}F: v\mapsto [X_{1},(F(v))]- F([X_{1},v])\, .\]
\item the Lie algebra $\XX(V)$-action $\XX(V)\times \Hom(V,\XX(V)) \to \Hom(V, \XX(V))$
by
\[ (Y\act F)(v)=[Y, F(v)] \]
for any $Y\in \XX(V)$.
\end{enumerate}

We denote by $\iota_{V}:V\to \XX(V)$ the natural inclusion $V\cong \XX_{\con}(V) \subset \XX(V)$.

The following proposition provides a criterion for linearisability of $X$.
\begin{proposition}\label{prop:LinQ}
Let $V$ be a formal manifold, and let $X$ be a formal vector field on $V$ satisfying $\sigma_{0}(X)=0$.
Suppose there exist subspaces $\fUV \subset \Hom(V, \XX(V))$ and $\fG \subset \XX(V)$ such that
\begin{enumerate}[label=\roman*)]
\item $\iota_{V} \in \fUV$,
\item $X_{n}\in \fG$ for all $n\geq 2$,
\item $\fG \act \fUV \subset \tliederivative{X_{1}}(\fUV)$.
\end{enumerate}
Then $X$ is linearisable.
\end{proposition}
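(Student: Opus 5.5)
The plan is to build a splitting map $\tau:V\to\XX(V)$ in the sense of Definition~\ref{defn:SP1} and then invoke Theorem~\ref{thm:First} \ref{item:c}$\Rightarrow$\ref{item:a}. First I rewrite the splitting condition in terms of the two operations just introduced. For $F\in\Hom(V,\XX(V))$ and $v\in V$,
\[
[X,F(v)]-F([X_{1},v]) \;=\; (\tliederivative{X_{1}}F)(v)+\sum_{n\geq 2}(X_{n}\act F)(v).
\]
So I need some $\tau$ with $\sigma_{0}\circ\tau=\id_V$ and
\[
\tliederivative{X_{1}}\tau+\sum_{n\ge2}X_n\act\tau=0 .
\]
Two bookkeeping facts will be used throughout. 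Grade $\Hom(V,\XX(V))$ by polynomial degree, so that $F_d$ takes values in $S^d(V^\vee)\tensor V$. Then $\tliederivative{X_1}$ preserves this degree, and $X_n\act(-)$ raises it by exactly $n-1$. Also $\tliederivative{X_{1}}\iota_V=0$, because the bracket of a linear field with a constant field is constant.

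I would construct $\tau$ as a sum over words $w=(n_1,\dots,n_j)$ with all $n_i\ge 2$, including the empty word. The weight of $w$ is $|w|=\sum_i(n_i-1)$. By recursion on length I choose $F_w\in\fUV$:
\begin{itemize}
\item set $F_\emptyset=\iota_V$, which lies in $\fUV$ by (i);
\item for $w=(w',n)$, we have $X_n\in\fG$ by (ii) and $F_{w'}\in\fUV$, so (iii) gives some $F_w\in\fUV$ with $\tliederivative{X_1}F_w=-X_n\act F_{w'}$.
\end{itemize}

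The main obstacle is that (iii) provides no homogeneity. The element $F_w$ need not be of a single polynomial degree, and its homogeneous components need not lie in $\fUV$. I would therefore keep the possibly inhomogeneous $F_w\in\fUV$ only to feed the next application of (iii). The contribution actually used is the component $\tau_w:=(F_w)_{|w|}$.

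Taking the degree-$|w|$ part of $\tliederivative{X_1}F_w=-X_n\act F_{w'}$ uses the degree shift of $X_n\act$ and the preservation of degree by $\tliederivative{X_1}$. It gives
\[
\tliederivative{X_1}\tau_w=-X_n\act(F_{w'})_{|w'|}=-X_n\act\tau_{w'} .
\]
Here $\tau_\emptyset=\iota_V$ is already homogeneous of degree $0$. This relation is the one point I would check carefully.

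Then I set $\tau=\sum_w\tau_w$. This is a well-defined element of $\Hom(V,\XX(V))$, since only finitely many words have a given weight. Every nonempty word has positive weight, so $\sigma_0\circ\tau=\sigma_0\circ\iota_V=\id_V$. Finally,
\[
\tliederivative{X_1}\tau+\sum_{n\ge2}X_n\act\tau=\sum_{w=(w',n)}\bigl(-X_n\act\tau_{w'}\bigr)+\sum_{n\ge 2}\sum_{w'}X_n\act\tau_{w'}=0 ,
\]
by reindexing nonempty words as pairs $(w',n)$ and using $\tliederivative{X_1}\tau_\emptyset=0$. Hence $\tau$ is a splitting map, and Theorem~\ref{thm:First} shows that $X$ is linearisable.
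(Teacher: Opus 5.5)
Your argument is correct and has the same skeleton as the paper's proof: build the splitting map degree by degree from (i)--(iii), then invoke Theorem~\ref{thm:First}. The difference lies in the step you flagged.

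The paper decomposes $\tau=\tau_0+\tau_1+\cdots$ with $\tau_n$ homogeneous of degree $n$. It then simply asserts that one can choose $\tau_n\in\fUV$ with $\tliederivative{X_{1}}\tau_n=-\sum_{k=2}^{n+1}X_k\act\tau_{n+1-k}$. This tacitly assumes that $\fUV$ is stable under taking homogeneous components. That holds in the admissible application, where $\fUV=\langle\iota_V\rangle\oplus(V^\vee\otimes\XX(V))_{\cC}$ is a product of homogeneous pieces, but it is not among the hypotheses.

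You separate the auxiliary elements $F_w\in\fUV$, which are fed back into (iii), from the homogeneous components $\tau_w=(F_w)_{|w|}$ that actually enter $\tau$. This removes that assumption at no cost. Your degree bookkeeping for $\tliederivative{X_{1}}$ and $X_n\act(-)$ is exactly what makes this work.

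The word indexing is more than you need. Since $\tliederivative{X_{1}}(\fUV)$ is a subspace, you can keep a single $F^{(n)}\in\fUV$ per degree, chosen so that $\tliederivative{X_{1}}F^{(n)}=-\sum_{k=2}^{n+1}X_k\act F^{(n+1-k)}$. Then set $\tau_n=(F^{(n)})_n$. This is the paper's induction with the gap closed.
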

\begin{proof}
By Theorem~\ref{thm:First}, it suffices to show that $X$ satisfies the splitting property. 
Note that a linear map $\tau:V\to \XX(V)$ decomposes into $\tau=\tau_{0}+\tau_{1}+\cdots$, where $\tau_{n}: V\to S^{n}(V^{\vee})\tensor V$. Then the map $\tau$ is a splitting map for $X$ if and only if
\begin{gather}
\tau_{0}(v)=\iota_{V}(v)=v \nonumber \\
 (\tliederivative{X_{1}} \tau_{n})(v)=[X_{1}, \tau_{n}(v)]-\tau_{n}([X_{1},v])=-\sum_{k=2}^{n+1}[X_{k},\tau_{n+1-k}(v)]= -\sum_{k=2}^{n+1} (X_{k}\act \tau_{n+1-k})(v) \, \label{eq:Tau1}
 \end{gather}
for all $n\geq 1$ and $v\in V$.

Now, by assumption, $\tau_{0}=\iota_{V} \in \fUV$. Assume that $\tau_{k}\in \fUV$ for all $k < n$. Then, 
\[(X_{k}\act \tau_{n+1-k}) \in \fG \act \fUV \subset \tliederivative{X_{1}}(\fUV)\]
for all $2\leq k \leq n+1$. Therefore, there exists $\tau_{n}\in \fUV$ satisfying~\eqref{eq:Tau1}. 
By an induction argument, we obtain a splitting map $\tau:V\to \XX(V)$ for $X$. Therefore, the formal vector field $X$ satisfies the splitting property, and this completes the proof.
\end{proof}

In~\cite{MR2678982}, Basto-Gonçalves introduced admissibility for formal vector fields and showed that such vector fields are always linearisable. Below, we briefly describe admissible vector fields and give an alternative proof using Proposition~\ref{prop:LinQ}. For this, we will assume that the base field $\KK=\CC$ is the field of complex numbers.

Let $V$ be a formal manifold with $\dim V= d$, and let $X$ be a formal vector field on $V$ with $\sigma_{0}(X)=0$.

Let $\lambda_{1},\lambda_{2},\ldots,\lambda_{m}$ denote the eigenvalues of the Lie derivative $\liederivative{X_{1}}=[X_{1},-]$ acting on the space of constant vector fields $\XX_{\con}(V)\cong V$.
Choose a basis $\{\partial_{1},\ldots,\partial_{d}\}$ of $V$ so that $\liederivative{X_{1}}$ is written in Jordan normal form:
\[
\liederivative{X_{1}} =
\begin{bmatrix}
	J_{d_1}(\lambda_1) & 0 & \cdots & 0 \\
	0 & J_{d_2}(\lambda_2) & \cdots & 0 \\
	\vdots & \vdots & \ddots & \vdots \\
	0 & 0 & \cdots & J_{d_m}(\lambda_m)
\end{bmatrix},
\]
where $J_{d_i}(\lambda_i)$ denotes the Jordan block of size $d_i$ associated with $\lambda_i$. 
We decompose $V$ as
\[ V= V_{1}\oplus \cdots \oplus V_{m} \, , \]
where $V_{i}$ denotes the invariant subspace corresponding to the $i$-th Jordan block $J_{i}$; that is, 
$\liederivative{X_{1}}(V_{i})\subset V_{i}$ and the restriction $\liederivative{X_{1}}|_{V_{i}}$ is represented by $J_{d_{i}}(\lambda_{i})$. 
Clearly, $\dim V_{i}=d_{i}$.
Let $\{x_{1},\ldots,x_{d}\}$ denote the basis of $V^{\vee}$ dual to the basis $\{\partial_{1},\ldots,\partial_{d}\}$ of $V$.

Set
\[
\Lambda=(\underbrace{\lambda_1,\ldots,\lambda_1}_{d_1},\ldots,\underbrace{\lambda_m,\ldots,\lambda_m}_{d_m})\in \CC^d.
\]
For a multi-index
$K=(k_1,\ldots, k_d)\in \ZZ_{\ge 0}^d$,
we define
\[
|K|=\sum_{i=1}^{d} k_{i} \, , \qquad 
x^{K}= x_{1}^{k_{1}} x_{2}^{k_{2}} \cdots x_{d}^{k_{d}}.
\]
Then, for each $n\ge 1$, the $n$-th coefficient $X_n$ of $X$ can be written as
\[
X_n=\sum_{|K|=n}\sum_{i=1}^{d} A_K^{i} \cdot x^K \partial_{i} \, ,
\qquad A_K^{i}\in \CC.
\]

We define the set of multi-indices corresponding to the non-zero nonlinear terms of $X$ as:
\[\cA = \{ K-e_{i} \in \ZZ^{d} \mid A_{K}^{i}\neq 0, |K|\geq 2\} \, , \]
where $e_{i}$ denotes the standard basis vector with $1$ in the $i$-th position and $0$ elsewhere.
Next, we project these indices into $\CC$ using the dot product with eigenvalues:
\[ \widehat{\cA}= \{ \Lambda \cdot \alpha \in \CC \mid \alpha \in \cA\} \, .\]
Let $\widehat{\cC}$ be the set of all finite sums of elements in $\widehat{\cA}$ with positive integer coefficients.

\begin{definition}\cite[Definition 3.1]{MR2678982}
A formal vector field $X$ on $V$ satisfying $\sigma_{0}(X)=0$ is called \textbf{admissible} if $0\notin \widehat{\cC}$.
\end{definition}

For the rest of this section, we reserve the symbol $P$ for a tuple $(p_{1},\ldots,p_{m}) \in \ZZ^{m}_{\geq 0}$. By abuse of notation, we write $\Lambda \cdot P = \lambda_{1}p_{1}+\lambda_{2}p_{2}+\cdots+\lambda_{m}p_{m}$.

For each $p\ge 0$, the space $S^{p}(V^\vee)\otimes V$ admits the decomposition
\[
S^p(V^\vee)\otimes V
=\bigoplus_{|P|=p}\ \bigoplus_{r=1}^m
V^{P}_{r},
\]
where
\[
V^{P}_{r}
=
S^{p_1}(V_{1}^\vee)\otimes \cdots \otimes S^{p_m}(V_{m}^\vee)\otimes V_{r}.
\]
Similarly,
\[
V^\vee\otimes S^p(V^\vee)\otimes V
\cong \Hom\big(V,S^p(V^\vee)\otimes V\big)
=
\bigoplus_{|P|=p}\ \bigoplus_{q,r=1}^m
V^{(q; P)}_{r},
\]
where
\[
V^{(q; P)}_{r}
=
V_{q}^\vee \otimes S^{p_1}(V_{1}^\vee)\otimes \cdots \otimes S^{p_m}(V_{m}^\vee)\otimes V_{r}.
\]

Define
\[
\XX(V)_{\cC}=\prod_{p=2}^{\infty} \XX(V)^{p}_{\cC}, \qquad 
\XX(V)^p_{\cC}
=
\bigoplus_{\substack{|P|=p\\ P\cdot \Lambda-\lambda_{r}\in \widehat{\cC}}}
V^{P}_{r},
\]
and
\[
(V^\vee\otimes \XX(V))_{\cC}= \prod_{p=2}^{\infty} (V^\vee\otimes \XX(V))^{p}_{\cC}, 
\qquad
(V^\vee\otimes \XX(V))^p_{\cC}
=
\bigoplus_{\substack{|P|=p\\ P\cdot \Lambda +\lambda_{q}-\lambda_{r}\in\widehat{\cC}}}
V^{(q;P)}_{r}.
\]

\begin{lemma} \label{lem:Admissible}
Let $X$ be an admissible formal vector field on $V$. The following hold:
\begin{enumerate}
\item The $\KK$-linear map $\tliederivative{X_{1}}:(V^{\vee}\tensor \XX(V))\to (V^{\vee}\tensor \XX(V))$ restricts to an isomorphism
 \[\tliederivative{X_{1}}: (V^{\vee}\tensor \XX(V))_{\cC} \to (V^{\vee}\tensor \XX(V))_{\cC} \, . \]
\item If $Y\in \XX(V)_{\cC}$ and $F\in (V^{\vee}\tensor \XX(V))_{\cC}$, then
\[Y\act F \in (V^{\vee}\tensor \XX(V))_{\cC} \, .\]
\item The map
\[\XX(V)\to (V^{\vee}\tensor \XX(V)), \qquad  Y\mapsto (Y\act \iota_{V})\]
restricts to a map $\XX(V)_{\cC} \to (V^{\vee}\tensor \XX(V))_{\cC}$.
\end{enumerate}
\end{lemma}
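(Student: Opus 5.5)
The plan is to work throughout with the weight decomposition given by $\liederivative{X_1}$. Take the semisimple part $S$ of $\liederivative{X_1}|_V$, i.e.\ the diagonal part of the Jordan form. A monomial-type element in $V^{P}_{r}$ (generated by $x^K\partial_i$ with $K$ of block-type $P$ and $\partial_i\in V_r$) is an eigenvector for the induced semisimple action with eigenvalue $\Lambda\cdot P-\lambda_r$, up to a global sign convention that I will fix once at the start. Similarly, $V^{(q;P)}_{r}$ is a generalised eigenspace of $\tliederivative{X_1}$ with eigenvalue $\pm(\Lambda\cdot P+\lambda_q-\lambda_r)$. I would first check that $\tliederivative{X_1}$ preserves each $V^{(q;P)}_r$. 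This holds because $[X_1,-]$ acts on $S^p(V^\vee)\otimes V$ by the derivation extension of the dual Jordan action, and the Jordan blocks preserve the $V_i$. On that space $\tliederivative{X_1}$ then acts as the scalar $\pm(\Lambda\cdot P+\lambda_q-\lambda_r)$ plus a nilpotent operator, the nilpotent part coming from the off-diagonal parts of the Jordan blocks.

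For (1): if $V^{(q;P)}_r$ lies in $(V^\vee\otimes\XX(V))_{\cC}$, then $\Lambda\cdot P+\lambda_q-\lambda_r\in\widehat{\cC}$, so it is nonzero by admissibility. Hence $\tliederivative{X_1}$ is invertible on each such finite-dimensional summand. Since $\tliederivative{X_1}$ preserves polynomial degree, its restriction to the product is a degreewise isomorphism, hence an isomorphism.

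For (2), I would argue by weight additivity. Take $Y\in V^{P'}_{s}$ with $\Lambda\cdot P'-\lambda_s\in\widehat{\cC}$ and $F$ with values of type $V^{(q;P)}_r$ with $\Lambda\cdot P+\lambda_q-\lambda_r\in\widehat{\cC}$. The bracket $[Y,F(v)]$ has two parts. The first is $Y(F(v))$, where $Y$ differentiates a coefficient variable of block $s$ in $F$; this lands in $V^{(q;P+P'-e_s)}_r$. The second is $F(v)$ differentiating $Y$, which removes a variable of block $r$ from $Y$ and lands in $V^{(q;P+P'-e_r)}_s$. In both cases the new weight is the sum of the two old weights, for example $(\Lambda\cdot P+\lambda_q-\lambda_r)+(\Lambda\cdot P'-\lambda_s)=\Lambda\cdot(P+P'-e_s)+\lambda_q-\lambda_r$. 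Since $\widehat{\cC}$ is closed under addition, the result lies in $\widehat{\cC}$. The degree also stays at least $2$, because $|P|+|P'|-1\geq 3$. Extending by continuity over the products gives (2).

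For (3): $(Y\act\iota_V)(v)=[Y,v]=-v(Y)$ when $v$ is constant, so only the second bracket type occurs. For $Y\in V^{P'}_s$ and $v\in V_q$, the result lies in $V^{(q;P'-e_q)}_s$, with weight $\Lambda\cdot(P'-e_q)+\lambda_q-\lambda_s=\Lambda\cdot P'-\lambda_s\in\widehat{\cC}$. The only worry is the degree bound: $|P'-e_q|=p'-1\geq 1$, but the definition requires degree at least $2$. I would handle degree-$1$ terms either by noting that the definitions only need the weight condition, or by checking the paper's intended indexing. The subsequent application only needs $X_n\act\tau_k$ with $n\geq2$, and I would adjust the $p\geq 2$ bookkeeping accordingly, for instance by allowing $p\geq1$ in $(V^\vee\otimes\XX(V))_{\cC}$; parts (1) and (2) go through unchanged with this.

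The main obstacle will be this bookkeeping in (3) together with the precise sign convention for the eigenvalue of $[X_1,-]$ on $V^\vee$ versus $V$, so that the weights come out as exactly $P\cdot\Lambda-\lambda_r$ rather than its negative. Admissibility is symmetric under $\widehat{\cC}\mapsto-\widehat{\cC}$ only in the sense that $0\notin\widehat{\cC}$, so a global sign does not matter, but it must be applied consistently. Checking that the nilpotent Jordan part preserves each block summand is routine.
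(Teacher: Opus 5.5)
Your proposal is correct and is essentially the paper's own argument. Like the paper, you show that each $V^{(q;P)}_r$ is invariant under $\tliederivative{X_{1}}$, acting triangularly with a single diagonal value that admissibility makes nonzero; that bracket weights add, so closure of $\widehat{\cC}$ under addition gives (2); and that $Y\act\iota_V\in\bigoplus_k V^{(k;P'-e_k)}_s$.

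Your degree worry in (3) is a genuine indexing slip in the paper, and its proof passes over it. For nonzero $Y\in\XX(V)^{2}_{\cC}$, the element $Y\act\iota_V$ has degree $1$, so $(V^{\vee}\tensor\XX(V))_{\cC}$ should start at $p=1$. Your fix is the right one, and Proposition~\ref{prop:LinQ} needs it too, since $\tau_1$ has degree $1$. Your sign caution is also apt: under the paper's convention the diagonal entry is $-(\Lambda\cdot P+\lambda_q-\lambda_r)$, which does not affect invertibility.
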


\begin{proof}
For the first assertion, direct computation shows that each summand $V^{(q ; P)}_{r}$ is invariant under $\tliederivative{X_1}$, and that, for an appropriate choice of ordered basis of $V^{(q ; P)}_{r}$, the induced map $\tliederivative{X_1}:V^{(q ; P)}_{r}\to V^{(q ; P)}_{r}$ can be represented by an upper triangular matrix with constant diagonal entry
\[
\Big(\sum_{i=1}^m p_i\lambda_i \Big) +\lambda_q -\lambda_r \, .
\]
Thus, $\tliederivative{X_1}:V^{(q ; P)}_{r}\to V^{(q ; P)}_{r}$ is an isomorphism if and only if $\lambda_r-\lambda_q-\sum_{i=1}^m p_i\lambda_i\neq 0$. Since $X$ is admissible, $\tliederivative{X_{1}}$ restricts to an isomorphism
$\tliederivative{X_{1}}: (V^{\vee}\tensor \XX(V))_{\cC} \to (V^{\vee}\tensor \XX(V))_{\cC}$.

For the second assertion, let $F\in V^{(q;P)}_{r} $ and $Y\in V^{P'}_{s}$. 
It is straightforward to check that
\[ Y\act F \in V^{(q; P+ P' - e_{r})}_{s} + V^{(q; P - e_{s}+P')}_{r} \, .\]
Since $\widehat{\cC}$ is closed under addition, if the values $(P\cdot \Lambda +\lambda_{q}-\lambda_{r})$ and $(P' \cdot \Lambda  - \lambda_{s})$ belong to $\widehat{\cC}$, so is their sum $( (P+P')\cdot \Lambda +\lambda_{q}-\lambda_{r}-\lambda_{s})$.
This proves the second assertion. 

For the last assertion, observe that 
\[\iota_{V}=\sum_{j=1}^{d}x_{j}\tensor \partial_{j} \in \bigoplus_{k=1}^{m}V^{(k; 0,\ldots,0)}_{k}.\]
 Thus, if $Y\in V^{P}_{r}$, then 
\[Y\act \iota_{V}\in \bigoplus_{k=1}^{m}  V^{(k; P-e_{k})}_{r}  \,. \]
Therefore, if $Y \in \XX(V)_{\cC}$, then $Y\act \iota_{V} \in (V^{\vee}\tensor \XX(V))_{\cC}$. This proves the last assertion, hence the lemma.
\end{proof}

By the lemma above, it is straightforward to check that 
if $X$ is admissible, then $\fUV = \langle \iota_{V} \rangle \oplus (V^\vee\otimes \XX(V))_{\cC}$ and $\fG= \XX(V)_{\cC}$ satisfy the hypotheses in Proposition~\ref{prop:LinQ}. Here, $\langle \iota_{V} \rangle$ denotes the vector space spanned by $\{\iota_{V}\}$.
Consequently, Proposition~\ref{prop:LinQ} provides an alternative proof of the following theorem due to Basto-Gonçalves~\cite{MR2678982}, demonstrating the utility of the splitting criterion established in Theorem~\ref{thm:First}.
\begin{theorem}\cite[Theorem~1]{MR2678982}\label{thm: admissible vector field is linearizable}
	Every admissible formal vector field $X$ is linearisable.
\end{theorem}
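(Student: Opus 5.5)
The plan is to apply Proposition~\ref{prop:LinQ} with
\[\fUV = \langle \iota_{V} \rangle \oplus (V^\vee\otimes \XX(V))_{\cC}, \qquad \fG = \XX(V)_{\cC},\]
and to check its three hypotheses using Lemma~\ref{lem:Admissible}. Theorem~\ref{thm:First} then turns the resulting splitting map into a linearising diffeomorphism.

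Hypothesis i) is immediate, since $\iota_V$ lies in $\fUV$ by construction.

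For hypothesis ii), fix $n\geq 2$ and expand $X_n=\sum A^i_K x^K\partial_i$. Each monomial $x^K\partial_i$ with $A^i_K\neq 0$ lies in some $V^P_r$, where $P$ collects the degrees of $x^K$ in each block $V_j$ and $i$ belongs to block $r$, so that $|P|=|K|=n$. Since $\Lambda$ is constant on blocks, $P\cdot\Lambda-\lambda_r=\Lambda\cdot(K-e_i)$. This value lies in $\widehat{\cA}\subset\widehat{\cC}$ because $K-e_i\in\cA$. Hence $X_n\in\XX(V)^n_{\cC}\subset\fG$.

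Hypothesis iii) is the substantive step: we must show $\fG\act\fUV\subset\tliederivative{X_1}(\fUV)$.
\begin{itemize}
\item For $Y\in\fG$, Lemma~\ref{lem:Admissible}(3) gives $Y\act\iota_V\in(V^\vee\otimes\XX(V))_{\cC}$.
\item For $F\in(V^\vee\otimes\XX(V))_{\cC}$, Lemma~\ref{lem:Admissible}(2) gives $Y\act F\in(V^\vee\otimes\XX(V))_{\cC}$.
\end{itemize}
So $\fG\act\fUV\subset(V^\vee\otimes\XX(V))_{\cC}$. By Lemma~\ref{lem:Admissible}(1), $\tliederivative{X_1}$ restricts to a bijection of $(V^\vee\otimes\XX(V))_{\cC}$, so this space equals $\tliederivative{X_1}\big((V^\vee\otimes\XX(V))_{\cC}\big)\subset\tliederivative{X_1}(\fUV)$. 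That gives iii).

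Two points need care, and I expect the main obstacle to be the infinite sums here.
\begin{itemize}
\item \emph{Polynomial degrees.} The spaces are products over $p\geq 2$, while the induction in Proposition~\ref{prop:LinQ} works degree by degree. One should check that $Y\act F$ with $Y$ of degree $p'\geq 2$ lands in degree at least $2$, which it does. In fact only the homogeneous components matter, because equation~\eqref{eq:Tau1} is solved one degree at a time.
\item \emph{Index range.} The low-degree piece of $Y\act\iota_V$ has polynomial degree $p'-1\geq 1$. Strictly, the definition of $(V^\vee\otimes\XX(V))_{\cC}$ starts at $p=2$, but Lemma~\ref{lem:Admissible} is stated with that convention. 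If needed, extend the product to $p\geq 1$; the eigenvalue argument of Lemma~\ref{lem:Admissible}(1) is unchanged, since admissibility ($0\notin\widehat{\cC}$) is all that is used.
\end{itemize}

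With all three hypotheses verified, Proposition~\ref{prop:LinQ} shows that $X$ is linearisable.
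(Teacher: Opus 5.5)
Your proposal is correct and is exactly the paper's argument: the same choices $\fUV=\langle\iota_V\rangle\oplus(V^\vee\otimes\XX(V))_{\cC}$ and $\fG=\XX(V)_{\cC}$, with the three hypotheses of Proposition~\ref{prop:LinQ} checked via Lemma~\ref{lem:Admissible}. Your index-range remark is necessary rather than optional, and Lemma~\ref{lem:Admissible}(3) must be read with the extended convention: with the product starting at $p=2$, $\fUV$ has no degree-$1$ part, yet the first step of the recursion requires $\tliederivative{X_1}\tau_1=-X_2\act\iota_V$, which is a nonzero degree-$1$ element whenever $X_2\neq0$, so $(V^\vee\otimes\XX(V))_{\cC}$ must be taken over $p\geq1$, where the admissibility argument goes through unchanged.
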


\begin{remark}
The notion of a weakly admissible vector field, a generalisation of admissible case, was introduced in the same paper~\cite{MR2678982}, where it was shown that such vector fields are always linearisable. By Theorem~\ref{thm:First}, any weakly admissible vector field must admit a splitting map. However, it is not immediately clear whether weakly admissible vector fields generally satisfy the conditions in Proposition~\ref{prop:LinQ}.
\end{remark}

\section{Other Linearisation problems}\label{Section:Variation on the Linearisation problem}
\subsection{Linearisation of mappings}
Another classical linearisation problem is the linearisation of a map near a fixed point (see for instance~\cite{MR695786}).
We will show that an analogous theorem holds by replacing a formal vector field with an endomorphism of a formal manifold.

We begin by recalling the classical setting.
Let $g: M\to N$ be a morphism of smooth manifolds, and let $G:C^{\infty}(N)\to C^{\infty}(M)$ denote the induced morphism of algebras.
For each $p\in M$, there exists an induced map on the tangent spaces
\[T_{p}g: T_{p}M \to T_{g(p)}N \, ,\]
but it does not extend to a map between their spaces of vector fields, unless $g$ is a diffeomorphism. Instead, $g$ induces a pair of maps connecting the space of vector fields
\[ 
\XX(M)\xto{g_{\ast}} \Gamma(g^{\ast}TN) \xleftarrow{\iota_{g}} \XX(N) \, .
\]
Under the identification 
\begin{multline*}
\Gamma(g^{\ast}TN)\cong C^{\infty}(M)\tensor_{C^{\infty}(N)} \XX(N)  \\
\cong  \{ \chi \in \Hom(C^{\infty}(N), C^{\infty}(M)): \chi(uv)=\chi(u)G(v) + G(u)\chi(v), \, \forall u,v \in C^{\infty}(N)\} \, , 
\end{multline*}
the above maps are explicitly described by
\[ g_{\ast}(X): u\mapsto X(G(u)), \qquad \iota_{g}(Y): u\mapsto G(Y(u)) \]
for all $X\in \XX(M)$, $Y\in \XX(N)$ and $u\in C^{\infty}(N)$.
If $g$ is a diffeomorphism, then the map $\iota_{g}$ is invertible. In this case, we have an isomorphism of vector fields $g_{\star}: \XX(M) \to \XX(N)$
defined by 
\begin{equation}\label{eq:Fstar}
	 g_{\star}(X): u\mapsto G^{-1}(X (G(u)))
\end{equation}
for $u\in C^{\infty}(N)$.
The same is true for formal manifolds.

Let $V$ be a formal manifold, and let $g:V\to V$ be a formal endomorphism of $V$, corresponding to an algebra homomorphism $G:\widehat{S}(V^{\vee})\to \widehat{S}(V^{\vee})$. The linear part of $g$ is a linear map $g_{1}:V\to V$. We naturally view $g_{1}$ as a formal endomorphism of $V$, whose associated algebra homomorphism $G_{1}:\widehat{S}(V^{\vee})\to \widehat{S}(V^{\vee})$ is the unique continuous extension of the dual map $g^{\transpose}_{1}:V^{\vee}\to V^{\vee}$.
Note that, by the definition of morphisms of formal manifolds, $g$ necessarily preserves the origin. 
We say the endomorphism $g$ is \textbf{linearisable} if there exists a diffeomorphism $\psi:V\to V$ such that
\begin{equation}\label{eq:MLIN}
g_{1}=\psi^{-1} g \psi \, .
\end{equation}
Equation~\eqref{eq:MLIN} induces a commutative diagram:
\begin{equation} \label{diag:M1}
\begin{tikzcd}
	\XX(V) \arrow{d}{\psi_{\star}} \arrow{r}{(g_{1})_{\ast}} & \Gamma(g_{1}^{\ast}TV) \arrow{d}{\psi_{\star}} & \arrow{l}[swap]{\iota_{g_{1}}} \XX(V) \arrow{d}{\psi_{\star}}\\
	\XX(V) \arrow{r}{g_{\ast}} & \Gamma(g^{\ast}TV) & \arrow{l}[swap]{\iota_{g}} \XX(V)
\end{tikzcd}
\end{equation}
where $\psi_{\star}:\Gamma(g_{1}^{\ast}TV) \to \Gamma(g^{\ast}TV)$ is defined by the same formula as in Eq.~\eqref{eq:Fstar}.

Since $g_{1}$ is linear, for a constant vector field $v\in V\cong \XX_{\con}(V)$, it satisfies
$(g_{1})_{\ast}(v)= \iota_{g_{1}}(g_{1}(v))$.
Indeed, it suffices to check for a linear function $\xi \in V^{\vee}\cong \Hom(V,\KK)$ on $V$:
\[ ((g_{1})_{\ast}(v))(\xi) = v(G_{1}(\xi)) = \xi(g_{1}(v))=G_{1}(\xi(g_{1}(v)))=(\iota_{g_{1}}(g_{1}(v)))(\xi)\, . \]
Thus, the commutative diagram~\eqref{diag:M1} induces the following commutative diagram
\begin{equation} \label{diag:M2}
\begin{tikzcd}
V \arrow{d}{g_{1}} \arrow{r}{\tau} & \XX(V) \arrow{rd}{g_{\ast}} \\
V \arrow{r}{\tau} & \XX(V) \arrow{r}{\iota_{g}} & \Gamma(g^{\ast}TV) \\
\end{tikzcd}
\end{equation}
by setting $\tau=\psi_{\star}|_{V}$. 

\begin{definition}
Let $V$ be a formal manifold and $g:V\to V$ be a formal endomorphism. We say $g$ satisfies the \textbf{splitting property} if there exists a linear map $\tau:V\to \XX(V)$ such that the diagram~\eqref{diag:M2} is commutative.
\end{definition}

The following theorem shows that linearisability is equivalent to the splitting property.

\begin{theorem}
Let $V$ be a formal manifold, and let $g:V\to V$ be a formal endomorphism of $V$. Then $g$ is linearisable if and only if $g$ satisfies the splitting property.
\end{theorem}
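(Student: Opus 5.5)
The plan is to follow the pattern of Theorem~\ref{thm:First-1}, working dually with coalgebras. By finite dimensionality, $g$ corresponds to a coalgebra endomorphism $\Gamma=G^{\transpose}:SV\to SV$ with components $\gamma_{n}:S^{n}V\to V$, $\gamma_{1}=g_{1}$. A diffeomorphism $\psi$ corresponds to a coalgebra isomorphism $\Phi:SV\to SV$. Sections of $g^{\ast}TV$ correspond to $\Gamma$-coderivations, i.e.\ maps $\chi:SV\to SV$ with $\Delta\chi=(\chi\tensor\Gamma+\Gamma\tensor\chi)\Delta$, and these are determined by $\Hom(SV,V)$. Under this dictionary $g_{\ast}(X)=\Gamma\circ X$ and $\iota_{g}(Y)=Y\circ\Gamma$. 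Diagram~\eqref{diag:M2} then says that $\tau:V\to\coDer(SV)$ satisfies
\[\tau(v)\cdot 1=v,\qquad \Gamma\circ\tau(v)=\tau(g_{1}(v))\circ\Gamma\]
for all $v\in V$. I would first verify this reformulation, including the normalisation $\sigma_{0}\tau=\id$ implicit in setting $\tau=\psi_{\star}|_{V}$. It is routine but needs care with which side $\Gamma$ acts on.

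Necessity is direct. Given $\psi$ with $g_{1}=\psi^{-1}g\psi$, after composing $\psi$ with the linear map $\psi_{1}^{-1}$ (which commutes with $g_{1}$ only after conjugation, so one has to be careful) I may assume $\phi_{1}=\id_{V}$. This is the analogue of the step \ref{item:a}$\Rightarrow$\ref{item:b} in Theorem~\ref{thm:First}; replacing $g$ by the conjugate $\psi_{1}^{-1} g\psi_{1}$ is harmless. Then define $\tau(v)\cdot\bfv=\Phi(v\odot\Phi^{-1}(\bfv))$, exactly as in the proof of Theorem~\ref{thm:First-1}. The identity $\Gamma\Phi=\Phi\Gamma_{1}$, where $\Gamma_{1}$ is the coalgebra map induced by $g_{1}$, together with $\Gamma_{1}(v\odot\bfw)=g_{1}(v)\odot\Gamma_{1}(\bfw)$, gives $\Gamma\tau(v)=\tau(g_{1}v)\Gamma$. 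This is diagram~\eqref{diag:M1} restricted to $V$.

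For sufficiency, I would use the same recursive formula \eqref{eq:PN1} to build $\Phi$ from $\tau$: $\phi_{1}=\id$ and $\phi_{n+1}(\bfv)=\frac{1}{n+1}\sum_{i}\sum_{k}\sign_{i}\,\tau^{v_{i}}_{k}\phi^{k}_{n}(\bfv^{\{i\}})$. Conceptually this says $(n+1)\phi_{n+1}$ is obtained by letting the frame $\tau$ act, i.e.\ $\Phi(v\odot\bfw)$ is $\tau(v)\Phi(\bfw)$ up to symmetrisation. I would then prove by induction on $n$ that $\sum_{k}\gamma_{k}\phi^{k}_{n}=\phi_{n}\circ g_{1}^{\odot n}$, which is the componentwise form of $\Gamma\Phi=\Phi\Gamma_{1}$. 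The base case is $\gamma_{1}=g_{1}$. For the inductive step, apply $\Gamma$ to the recursion. Use the splitting identity to move $\Gamma$ past $\tau(v_{i})$, turning it into $\tau(g_{1}v_{i})$ acting after $\Gamma$. Then use the inductive hypothesis on $\Gamma\phi_{n}^{k}$, which requires the hypothesis at lower arity for all $\phi^{k}_{r}$ via the convolution identities (Lemmas~\ref{lem:QNPNK}, \ref{lem:PNM}).

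The main obstacle is this bookkeeping. The recursion defines only the corestriction $\phi_{n+1}$, while the splitting identity involves the full coderivation $\widetilde{\tau}^{v}$ and the full $\Gamma$. One therefore has to rewrite $\Gamma\circ\widetilde{\tau}^{v}_{k}\phi^{k}_{n}$ through the $\Gamma$-coderivation identity and recollect terms into the form $(r+1)\phi_{r+1}\conv\phi^{m-1}_{n-r}$, as in~\eqref{eq:rPLUS1}. I expect a cleaner route is first to show that the constructed $\Phi$ satisfies $\Phi(v\odot\Phi^{-1}\bfv)=\tau(v)\bfv$, i.e.\ that the recursion really inverts the construction of the necessity direction. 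This is the hidden content of Theorem~\ref{thm:First-1}, and it may need the splitting property itself to supply the required symmetry. Granting that identity, the intertwining follows formally: $\Gamma\Phi(v\odot\bfw)=\Gamma\tau(v)\Phi(\bfw)=\tau(g_{1}v)\Gamma\Phi(\bfw)$, and induction on word length gives $\Gamma\Phi=\Phi\Gamma_{1}$.
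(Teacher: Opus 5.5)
Your dualisation, your necessity argument, and your primary plan for sufficiency all coincide with the paper's proof. That plan is the recursion \eqref{eq:PN1} plus induction on $\phi_n f_1=\sum_k f_k\phi_n^k$, in the paper's notation $F,f_k$ for your $\Gamma,\gamma_k$. The gap is the ``cleaner route'' you propose for closing the inductive step. The identity $\Phi(v\odot\Phi^{-1}(\bfv))=\tau(v)\cdot\bfv$ is false for a general splitting map, and the splitting property cannot supply the missing symmetry.

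For $g=\id_V$ the condition $\tau(f_1(v))\cdot F(\bfv)=F(\tau(v)\cdot\bfv)$ is vacuous, so every degree-preserving $\tau$ with $\tau(v)\cdot 1=v$ is a splitting map. On $\bfv=w\in V$, the $V$-component of your identity reads $\phi_2(v\odot w)=\tau_1^v(w)$, i.e.\ $\tau_1^v(w)=(-1)^{\degree{v}\degree{w}}\tau_1^w(v)$. This fails for $V=\KK e_1\oplus\KK e_2$ in degree $0$ with $\tau_1^{e_1}(e_2)=e_1$ and $\tau_1^{e_i}(e_j)=0$ otherwise. Conceptually, the coderivations $\Phi\circ(v\odot-)\circ\Phi^{-1}$ pairwise graded-commute, while splitting maps need not (see the remark after Theorem~\ref{thm:First}). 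The recursion only sees the symmetrisation of $\tau$, so $\Phi$ does not recover $\tau$.

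What is true is a symmetrised identity. For \emph{any} $\tau$ with $\tau(v)\cdot 1=v$ (no splitting needed) and any $\bfv=v_1\odot\cdots\odot v_{n+1}$,
\[
(n+1)\,\Phi(\bfv)=\sum_{i=1}^{n+1}\sign_i\,\tau(v_i)\cdot\Phi(\bfv^{\{i\}}) .
\]
Its $V$-component is literally \eqref{eq:PN1}. Its $S^m$-component for $m\geq 2$ is \eqref{eq:rPLUS1}, with the $\widetilde{\tau}^{v_i}_0$-terms supplying the summand $\phi_1\conv\phi^{m-1}_n$, followed by Lemma~\ref{lem:PNM}. With this identity your formal argument goes through. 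If $F\Phi=\Phi f_1$ on $S^{\leq n}V$, then
\[
(n+1)F\Phi(\bfv)=\sum_i\sign_i\,\tau(f_1(v_i))\cdot F\Phi(\bfv^{\{i\}})=\sum_i\sign_i\,\tau(f_1(v_i))\cdot\Phi(f_1(\bfv^{\{i\}}))=(n+1)\,\Phi(f_1(\bfv)),
\]
where the last step is the same identity applied to $f_1(\bfv)=f_1(v_1)\odot\cdots\odot f_1(v_{n+1})$. This packaging separates the $\tau$-independent combinatorics from the single use of the splitting property. The paper instead inserts \eqref{eq:MPF} and the componentwise splitting identity \eqref{eq:MCB} into the recursion, cancels the $f_1\tau^{v_i}_k$ terms, and recollects via \eqref{eq:rPLUS1}; the substantive work is the same.

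Finally, your caution in the necessity direction is unnecessary. Comparing linear parts of $g_1=\psi^{-1}g\psi$ shows that $\psi_1$ commutes with $g_1$, so $\psi\psi_1^{-1}$ is already a linearising diffeomorphism with linear part $\id_V$.
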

As in the case of formal vector fields, we reformulate this result using the language of graded coalgebras.

\begin{theorem}
Let $V$ be a graded vector space, and let $F:SV\to SV$ be a morphism of graded coalgebras with linear part $f_1$. The following are equivalent:
\begin{enumerate}[label=\rm{(\Roman*)}]
\item \label{item:MA}
There exists an isomorphism of graded coalgebras
\[\Phi:SV\to SV\]
such that $\phi_{1}=\id_{V}$ and $\Phi f_{1}=F \Phi$.
\item \label{item:MB}
There exists a degree-preserving linear map $\tau:V\to \coDer(SV)$ such that $\tau(v)\cdot 1 = v$ and 
\[
 \tau(f_{1}(v))\cdot F(\bfv) = F \big( \tau(v) \cdot \bfv \big)
\]
 for all $v\in V$ and $\bfv\in SV$.
\end{enumerate}
\end{theorem}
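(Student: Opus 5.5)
The plan is to mirror the proof of Theorem~\ref{thm:First-1}. Throughout, $f_1$ denotes the linear part of $F$, viewed as the coalgebra morphism $S(f_1)$. The components of $F$ are written $f_n^k : S^nV\to S^kV$, and we set $f_n=f_n^1$.

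For \ref{item:MA}~$\Rightarrow$~\ref{item:MB}, define $\tau$ exactly as before:
\[ \tau(v)\cdot\bfv=\Phi\big(v\odot\Phi^{-1}(\bfv)\big). \]
This is a coderivation, because $\Phi$ is a coalgebra isomorphism and $v\odot-$ is the coderivation determined by $v$. Since $\phi_1=\id_V$ and $\Phi(1)=1$, we get $\tau(v)\cdot 1=v$. For the compatibility condition we use $\Phi^{-1}F=f_1\Phi^{-1}$ and the fact that $S(f_1)$ is an algebra morphism:
\begin{eqnarray*}
\tau(f_1(v))\cdot F(\bfv)&=&\Phi\big(f_1(v)\odot f_1(\Phi^{-1}\bfv)\big)\\
&=&\Phi f_1\big(v\odot \Phi^{-1}\bfv\big)\\
&=&F\Phi\big(v\odot\Phi^{-1}\bfv\big)\\
&=&F\big(\tau(v)\cdot\bfv\big).
\end{eqnarray*}
Signs do not arise, because $f_1$ and $\Phi$ have degree $0$.

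For \ref{item:MB}~$\Rightarrow$~\ref{item:MA}, define $\Phi$ by the same recursion~\eqref{eq:PN1} used in Theorem~\ref{thm:First-1}: $\phi_1=\id$ and
\[ \phi_{n+1}(\bfv)=\frac{1}{n+1}\sum_i\sum_{k=1}^n \sign_i\,\tau_k^{v_i}\phi_n^k(\bfv^{\{i\}}). \]
A more conceptual way to see $\Phi$ is as the unique coalgebra map satisfying $\Phi(v\odot\bfw)=\tau(v)\cdot\Phi(\bfw)$ after symmetrisation. The identity $(n+1)\phi_{n+1}=\sum_i\sign_i\,\tau^{v_i}\Phi(\bfv^{\{i\}})$, projected to $V$, is precisely this. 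The proof of Theorem~\ref{thm:First-1} uses the relation~\eqref{eq:rPLUS1} together with Lemma~\ref{lem:PNM}, and these show that it can be upgraded to all components:
\[ (n+1)\,\phi^{m}_{n+1}(\bfv)=\sum_i\sign_i\,\big(\widetilde{\tau}^{v_i}\Phi\big)^m(\bfv^{\{i\}}). \]
I will isolate this as a lemma (call it ($\ast$)). It says that $\Phi$ intertwines the Euler-type operator $\bfv\mapsto\sum_i v_i\odot\bfv^{\{i\}}$ with $\bfw\mapsto\sum\tau(\cdot)\cdot$, and it is the common engine of both theorems.

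With ($\ast$) in hand, we prove $F\Phi=\Phi f_1$ by induction on the word length $n$, comparing projections to $V$ via the cogenerator property, since both sides are coalgebra maps. For $n=1$ both sides equal $f_1$. Assume $F\Phi=\Phi f_1$ on $S^{\le n}V$ and take $\bfv\in S^{n+1}V$. Note that $f_1(\bfv)$ is the symmetric product of the $f_1(v_i)$. Then
\begin{eqnarray*}
(n+1)\,\Phi f_1(\bfv)&=&\sum_i\sign_i\,\tau(f_1(v_i))\cdot\Phi f_1(\bfv^{\{i\}})\\
&=&\sum_i\sign_i\,\tau(f_1 v_i)\cdot F\Phi(\bfv^{\{i\}})\\
&=&\sum_i\sign_i\,F\big(\tau(v_i)\cdot\Phi(\bfv^{\{i\}})\big)\\
&=&(n+1)\,F\Phi(\bfv).
\end{eqnarray*}
The first equality is ($\ast$) applied to $f_1(\bfv)$, the second is the inductive hypothesis, the third is \ref{item:MB}, and the fourth is ($\ast$) again. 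Since the characteristic is $0$, dividing by $n+1$ closes the induction. Finally, $\Phi$ is an isomorphism because $\phi_1=\id_V$.

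The main obstacle is establishing ($\ast$) cleanly at all tensor-length components rather than only at the linear component. The first thing to try is to verify it component by component with Lemma~\ref{lem:QNPNK} (applied to the coderivations $\tau_l^{v_i}$) and Lemma~\ref{lem:PNM}, exactly as in the computation~\eqref{eq:rPLUS1}. One also has to handle the fact that the inductive step uses $F\Phi=\Phi f_1$ on $\bfv^{\{i\}}$ in every component. This is automatic, because both sides are coalgebra maps agreeing on $S^{\le n}V$.
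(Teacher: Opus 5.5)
Your proof is correct, and it reorganises the paper's argument around a different key lemma. The paper never states your $(\ast)$. It proves only the linear identity $\phi_{n+1}f_1=\sum_k f_k\phi^k_{n+1}$, working throughout in components:
\begin{itemize}
\item it uses the inductive hypothesis in the form \eqref{eq:MPF};
\item it rewrites \ref{item:MB} as the component identity \eqref{eq:MCB} and reindexes;
\item only in the last line does it appeal to ``the same argument as in \eqref{eq:rPLUS1}''.
\end{itemize}
That last appeal is exactly the $(l+1)$-st component of your $(\ast)$, namely $\sum_i\sign_i\sum_{k\ge 0}\widetilde{\tau}^{v_i}_k\phi^{k+l}_n(\bfv^{\{i\}})=(n+1)\phi^{l+1}_{n+1}(\bfv)$.

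You instead establish $(n+1)\Phi(\bfv)=\sum_i\sign_i\,\tau(v_i)\cdot\Phi(\bfv^{\{i\}})$ once, as an identity in $SV$. After that, \ref{item:MB} and the inductive hypothesis are used exactly as stated, and no component $f^k_m$ of $F$ ever appears.

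Your plan for $(\ast)$ goes through:
\begin{itemize}
\item the $m=1$ component is \eqref{eq:PN1} itself;
\item for $m\ge 2$, the $\widetilde{\tau}^{v_i}_0$ term contributes $(\phi_1\conv\phi^{m-1}_n)(\bfv)$; this must be split off, since Lemma~\ref{lem:QNPNK} is applied only to the $\tau^{v_i}_l$ with $l\ge 1$;
\item the terms with $l\ge 1$ give $\sum_{r\ge 1}(r+1)(\phi_{r+1}\conv\phi^{m-1}_{n-r})(\bfv)$, exactly as in \eqref{eq:rPLUS1};
\item Lemma~\ref{lem:PNM} sums the two contributions to $(n+1)\phi^m_{n+1}(\bfv)$.
\end{itemize}

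Your route buys a short, essentially coordinate-free inductive step and a reusable lemma. The same pattern also reproves Theorem~\ref{thm:First-1}: use that $\widetilde{q}_1$ preserves word length and that $\tau(q_1(v))=[Q,\tau(v)]$. The paper's route keeps the computation within the explicit component formalism already set up for Theorem~\ref{thm:First-1}, at the cost of heavier index bookkeeping.

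One small simplification: your chain of equalities already gives $\Phi f_1=F\Phi$ on all of $S^{n+1}V$. So if you take the inductive hypothesis to be full equality on $S^{\le n}V$, the appeal to the cogenerator property is unnecessary.
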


\begin{proof}
The proof is mostly identical to the proof of Theorem~\ref{thm:First-1}.

Assume that~\ref{item:MA} holds. Define a linear map $\tau:V\to \coDer(SV)$ by
\[ \tau(v) \cdot \bfv = \Phi(v\odot \Phi^{-1}(\bfv)) \]
for $v\in V$ and $\bfv\in SV$. Then we have
\[ \tau(f_{1}(v))\cdot F(\bfv) = \Phi(f_{1}(v)\odot \Phi^{-1}(F(\bfv))) = \Phi f_{1}(v\odot \Phi^{-1}(\bfv)) =F\Phi(v\odot \Phi^{-1}(\bfv)) = F \big( \tau(v) \cdot \bfv \big) \, ,\]
as required.

Conversely, assume that~\ref{item:MB} holds. Denoting $F=(f_{1},f_{2},\ldots)$, we have
\begin{equation}\label{eq:MCB}
\sum_{k=1}^{n}\tau_{k}^{f_{1}(v)}f_{n}^{k}(\bfw)=\sum_{k=1}^{n+1}f_{k}\widetilde{\tau}_{n-k+1}^{v}(\bfw)
\end{equation}
for $v\in V$ and $\bfw\in S^{n}(V)$. As usual, we denote the composition $f_{n}^{k}:S^{n}(V)\into SV\xto{F} SV \onto S^{k}(V)$.

It suffices to show that the isomorphism $\Phi=(\phi_{1},\phi_{2},\ldots):SV\to SV$ of graded coalgebras
defined by $\phi_{1}=\id_{V}$ and the relation~\eqref{eq:PN1} satisfies $\Phi f_{1} = F \Phi$, or equivalently, 
\[\phi_{n} f_{1}= \sum_{k=1}^{n} f_{k}\phi_{n}^{k} \]
for all $n \geq 1$, by viewing $f_{1}:SV\to SV$ as a coalgebra morphism determined by $f_{1}:V\to V$. 

We argue by induction on $n$. Since $\phi_{1}=\id_{V}$, it is clear that when $n=1$, we have $\phi_{1}f_{1}=f_{1}=f_{1}\phi_{1}$.

Assume that $\phi_{r}f_{1} = \sum_{k=1}^{r}f_{k} \phi_{r}^{k}$ holds for all $r\leq n$. 
This means that for $\bfw \in S^{\leq n}(V)$, we have \[\Phi f_{1}(\bfw) = F\Phi(\bfw)\] and in particular, we have
\begin{equation}\label{eq:MPF}
 \phi_{n}^{k}f_{1}=\sum_{m=k}^{n}f_{m}^{k}\phi_{n}^{m}
 \end{equation}
for $k\leq n$.
By~\eqref{eq:PN1}, we have
\begin{align*}
(n+1)(\phi_{n+1}f_{1} - f_{1} \phi_{n+1})(\bfv)
&= \sum_{i=1}^{n+1}\sum_{k=1}^{n} \sign_{i} \cdot \big( \tau_{k}^{f_{1}(v_{i})} \phi_{n}^{k}(f_{1}(\bfv^{\{i\}})) -  f_{1}(\tau_{k}^{v_{i}}\phi_{n}^{k}(\bfv^{\{i\}})) \big) \\
&=\sum_{i=1}^{n+1}\sum_{k=1}^{n} \sum_{m=k}^{n} \sign_{i} \cdot \tau_{k}^{f_{1}(v_{i})}f_{m}^{k}\phi_{n}^{m}(\bfv^{\{i\}}) - \sum_{i=1}^{n+1}\sum_{k=1}^{n} \sign_{i} \cdot f_{1}( \tau_{k}^{v_{i}}\phi_{n}^{k}(\bfv^{\{i\}}))\\
&=\sum_{i=1}^{n+1}\sum_{k=1}^{n} \sign_{i} \cdot \big( \sum_{m=1}^{k} \tau_{m}^{f_{1}(v_{i})}f_{k}^{m}\phi_{n}^{k}(\bfv^{\{i\}}) - f_{1}(\tau_{k}^{v_{i}}\phi_{n}^{k}(\bfv^{\{i\}}))\big)\\
&=\sum_{i=1}^{n+1}\sum_{k=1}^{n}  \big( \sum_{m=2}^{k+1} \sign_{i} \cdot f_{m}\widetilde{\tau}_{k-m+1}^{v_{i}}\phi_{n}^{k}(\bfv^{\{i\}}) \big)\\
&=\sum_{i=1}^{n+1}\sum_{l=1}^{n}\sum_{k=0}^{n-l} \sign_{i}\cdot f_{l+1} (\widetilde{\tau}_{k}^{v_{i}} \phi_{n}^{k+l}(\bfv^{\{i\}}))\\
 &=(n+1)\sum_{l=1}^{n} f_{l+1}\phi_{n+1}^{l+1}(\bfv)
\end{align*}
for all $\bfv=v_{1}\odot \cdots \odot v_{n+1}\in S^{n+1}(V)$ and $\sign_{i}=(-1)^{\degree{v_{i}}(\degree{v_{1}}+\cdots+\degree{v_{i-1}})}$.
Here, the second equality holds by~\eqref{eq:MPF},
the fourth equality holds by~\eqref{eq:MCB},
and the last equality holds by the same argument as in~\eqref{eq:rPLUS1}.
This completes the proof.
\end{proof}

\subsection{Linearisation of homotopy algebras}
\subsubsection{L-infinity algebras}

Applying Theorem~\ref{thm:First-1} to an $L_{\infty}[1]$ algebra $(V,Q)$, we obtain an elementary proof of the following theorem, originally due to Bandiera~\cite{MR3622306}, providing a geometric interpretation.
\begin{theorem}[\cite{MR3622306}]\label{thm:Bandiera}
Let $(V,Q)$ be an $L_{\infty}[1]$ algebra. The following are equivalent.
\begin{enumerate}[label=\rm{(\Roman*)}]
\item \label{item:LLIN}
There exists an $L_{\infty}[1]$ algebra isomorphism $\Phi=(\phi_{1},\phi_{2},\cdots):(V,q_{1})\to (V,Q)$ with $\phi_{1}=\id_{V}$.
\item \label{item:LSPLIT}
There exists a DG right inverse of the chain map $\ev_{1}:(\coDer(SV), [Q,-]) \to (V,q_{1})$.
\end{enumerate}
\end{theorem}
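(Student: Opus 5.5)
The plan is to reduce Theorem~\ref{thm:Bandiera} to Theorem~\ref{thm:First-1} by translating both conditions. An $L_\infty[1]$ structure on $V$ is a degree $+1$ coderivation $Q$ on $SV$ with $Q^2=0$ and $q_0=0$. Thus $q_1$ is a differential on $V$, $(\coDer(SV),[Q,-])$ is a cochain complex, and $\liederivative{Q}=[Q,-]$. The evaluation $\ev_1:\coDer(SV)\to V$, $D\mapsto D\cdot 1$, is exactly the coalgebra counterpart of $\sigma_0$. I will first check that $\ev_1$ is a chain map. Since $q_0=0$ gives $Q(1)=0$, and $[Q,D]\cdot 1$ only sees the component $\pr_V Q(D\cdot 1)$, we get
\[
\ev_1([Q,D]) = Q(D\cdot 1)-(-1)^{|D|}D(Q\cdot 1)=q_1(\ev_1 D).
\]

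For \ref{item:LLIN}$\Rightarrow$\ref{item:LSPLIT}, note that an $L_\infty[1]$ isomorphism $\Phi:(V,q_1)\to(V,Q)$ with $\phi_1=\id_V$ is precisely a coalgebra isomorphism as in Theorem~\ref{thm:First-1}~\ref{item:A}, namely $Q\Phi=\Phi\widetilde q_1$. That theorem then provides a degree-preserving $\tau$ with $\tau(v)\cdot 1=v$ and $[Q,\tau(v)]=\tau(q_1(v))$. Concretely, $\tau(v)=\Phi\circ(v\odot-)\circ\Phi^{-1}$. The first condition says $\ev_1\circ\tau=\id_V$, and the second says $\tau$ is a chain map $(V,q_1)\to(\coDer(SV),[Q,-])$. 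So $\tau$ is a DG right inverse of $\ev_1$.

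For the converse, take a DG right inverse $\tau$. Right inverse gives $\tau(v)\cdot 1=v$. The chain-map condition $[Q,\tau(v)]=\tau(q_1 v)$ is an equality of coderivations, so in particular it holds after applying both sides to every $\bfv\in SV$. This is exactly Theorem~\ref{thm:First-1}~\ref{item:B}, which produces a coalgebra isomorphism $\Phi$ with $\phi_1=\id_V$ and $Q\Phi=\Phi\widetilde q_1$. Here $\widetilde q_1$ is the coderivation of the $L_\infty[1]$ algebra $(V,q_1)$, so $\Phi$ is an $L_\infty[1]$ morphism $(V,q_1)\to(V,Q)$, invertible because $\phi_1$ is.

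The main obstacle is only bookkeeping. Degree-preserving in Theorem~\ref{thm:First-1} must match the convention that a DG map is of degree $0$. The sign $(-1)^{|v||Q|}$ in $\liederivative{Q}$ must agree with the graded commutator $[Q,-]$ on $\coDer(SV)$, with $|Q|=1$. Once $\tau$ is viewed as a map into $\coDer(SV)$ via $\coDer(SV)\cong\Hom(SV,V)$, both translations are direct. Nothing specific to $L_\infty$ algebras is needed beyond $Q^2=0$, which is used only to make the two sides genuine complexes.
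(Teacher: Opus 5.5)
Your proposal is correct and is the paper's own route: the paper proves this theorem simply by applying Theorem~\ref{thm:First-1} with $|Q|=1$, so that $\liederivative{Q}=[Q,-]$. A DG right inverse of $\ev_1$ is then exactly a splitting map $\tau$, and an $L_\infty[1]$ isomorphism with $\phi_1=\id_V$ is exactly the coalgebra isomorphism in condition (I) of that theorem. Your explicit check that $\ev_1$ is a chain map, and your degree and sign bookkeeping, are details the paper leaves implicit.
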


\begin{remark}
To provide context for the $A_{\infty}$ analogue of the above theorem, we note that Bandiera~\cite{MR3622306} proved that conditions~\ref{item:LLIN} and~\ref{item:LSPLIT} are equivalent to a third condition: the natural (Chevalley--Eilenberg) spectral sequence computing $H_{\CE}(V,V)$ degenerates at $E_{1}$.
\end{remark}

\begin{remark}
An $L_{\infty}[1]$ algebra $(V,Q)$ is called \textit{abelian} if $Q$ is linear (i.e., $Q=q_{1}$), and $(V,Q)$ is called \textit{homotopy abelian} if it is $L_{\infty}[1]$ quasi-isomorphic to an abelian $L_{\infty}[1]$ algebra. It is well known that an $L_{\infty}[1]$ algebra is homotopy abelian if and only if it is linearisable (see, for instance~\cite[Lemma~12.7.2]{MR3622306}).
\end{remark}

\subsubsection{A-infinity algebras}
Motivated by the proof of Theorem~\ref{thm:First-1}, we establish an $A_{\infty}[1]$ analogue of Theorem~\ref{thm:Bandiera}.

Let $(A,\mu)$ be an $A_{\infty}[1]$ algebra. Then the $A_{\infty}[1]$ algebra structure $\mu\in \coDer(TA)$ is uniquely determined by the sequence $\{\mu_{k}\}_{k\geq 1}$ of maps $\mu_{k}:T^{k}(A)\to A$ of degree $1$ satisfying a compatibility condition.
Consider the space $\Hom(TA,A)$ equipped with a map $D_{\mu}:\Hom(TA,A)\to \Hom(TA,A)$ of degree $1$ defined by
\[D_{\mu}(F):\mathbf{a} \mapsto \mu_{n+1}(F(1)\tensor \mathbf{a}) + \sum_{k=1}^{n}\mu_{n-k+1}(F(a_{1}\tensor\cdots \tensor a_{k})\tensor a_{k+1}\tensor \cdots \tensor a_{n}) -(-1)^{\degree{F}} F\mu(\mathbf{a})\]
for $F\in \Hom(TA,A)$ and $\mathbf{a}=a_{1}\tensor\cdots\tensor a_{n}\in T^{n}(A)$.
Equivalently, using the natural convolution product $\conv$ on $\Hom(TA,TA)$, the map $D_{\mu}$ can be formulated as
\[D_{\mu}(F)=\pr_{A} \mu(F\conv \id) - (-1)^{\degree{F}}F\mu \, ,\]
where $\pr_{A}:TA \to A$ denotes the natural projection. 
It can be checked that the $A_\infty[1]$ relations for $\{\mu_k\}_{k \geqslant 1}$ imply that $D_\mu^2=0$, i.e., it is a differential.
Moreover, the evaluation at $1\in \KK =T^{0}(A) \subset TA$ yields a cochain map
\[ \ev_{1}:(\Hom(TA,A),D_{\mu}) \to (A, \mu_{1}) \, .\]

\begin{remark}\label{rem:AAR}
We point out that the cohomology group of the cochain complex $(\Hom(TA,A),D_{\mu})$ is the Hochschild cohomology $\HH(A,A^{R})$ of the $A_{\infty}[1]$ algebra $A$ with coefficients in the $(A,A)$-bimodule $A^{R}$. Here, $A^{R}=A$ as a graded vector space, endowed with an  $A_{\infty}[1]$ bimodule structure
\[ \rho_{n,m}: A^{\tensor n}\tensor A^{R} \tensor A^{\tensor m} \to A^{R} \, ,\]
such that $\rho_{0,m}=\mu_{m+1}$ and $\rho_{n,m}=0$ for all $n\geq 1$.
Thus the left action of $A$ on $A^{R}$ is trivial, while the right action of $A$ on $A^{R}$ is the adjoint action. This is different from the usual Hochschild cohomology $\HH(A,A)$ of the $A_{\infty}[1]$ algebra $A$, where the coefficient $A$ is the canonical $(A,A)$-bimodule defined by
\[ \mu_{n,m}: A^{\tensor n}\tensor A \tensor A^{\tensor m} \to A\]
satisfying $\mu_{n,m}=\mu_{n+m+1}$ for all $n,m$. Indeed, the usual Hochschild cohomology $\HH(A,A)$ is the cohomology group of the cochain complex $(\coDer(TA), \liederivative{\mu})$ where the Lie derivative $\liederivative{\mu}$ is defined by the graded commutator with $\mu$.
\end{remark}

Note that the space $\Hom(TA,A)$ carries a natural decreasing filtration by subcomplexes
\[\mathcal{F}^0\supset \mathcal{F}^1\supset \cdots \]
in which
\begin{equation}\label{eq:AFilt}
\mathcal{F}^{p}=(\Hom(T^{\geq p}(A), A),D_\mu)=(\Hom(\bigoplus_{k=p}^{\infty}A^{\tensor k},A),D_\mu).
\end{equation}
Equipped with this filtration, $(\Hom(TA,A), D_\mu)$ becomes a filtered cochain complex, and thus induces a (Hochschild) spectral sequence computing $\HH(A, A^{R})$~\cite{MR1269324}.

It is straightforward to check the following lemma.
\begin{lemma}\label{lem:AABDiff}
Let $(A,\mu)$ be an $A_{\infty}[1]$ algebra. 
The convolution product with the identity map $\id:TA\to TA$ defines a morphism of cochain complexes
\[ \iota_{A}: (\Hom(TA,A), D_{\mu}) \to (\Hom(TA,TA), \liederivative{\mu}), \qquad \iota_{A}(F)=F\conv \id \, ,\]
where $\liederivative{\mu}$ is defined as the graded commutator with $\mu$. That is, for $F\in \Hom(TA,A)$,
\[ D_{\mu}(F) \conv \id = \mu (F\conv \id) -(-1)^{\degree{F}} (F\conv \id) \mu : TA \to TA \, .\]
\end{lemma}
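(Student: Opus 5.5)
The identity can be checked component by component, but I would instead use a uniqueness argument. Maps of the form $G\conv\id$, with $G\in\Hom(TA,A)$, are exactly the maps $R:TA\to T^{\geq 1}A$ that are compatible with the right $TA$-coaction given by deconcatenation, and such a map is determined by $\pr_{A}R$. So the plan is to show that the right-hand side $R_F:=\mu(F\conv\id)-(-1)^{\degree{F}}(F\conv\id)\mu$ has this compatibility, and that $\pr_A R_F = D_\mu(F)$. Then $R_F = D_\mu(F)\conv\id$. Throughout, $(F\conv\id)(a_1\tensor\cdots\tensor a_n)=\sum_{k=0}^{n}F(a_1\tensor\cdots\tensor a_k)\tensor a_{k+1}\tensor\cdots\tensor a_n$, where $\Delta$ is deconcatenation and $m$ is concatenation.

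\emph{Step 1: structure of right comodule maps.} For a map $R:TA\to T^{\geq1}A$ satisfying $\Delta R=(R\tensor\id)\Delta$, I claim $R=(\pr_AR)\conv\id$. Indeed, on $T^{\geq 1}A$ we have $m\circ(\pr_A\tensor\id)\circ\Delta=\id$, since each word has exactly one splitting whose first factor has length one. Hence
\[R=m(\pr_A\tensor\id)\Delta R=m(\pr_AR\tensor\id)\Delta=(\pr_AR)\conv\id.\]
Conversely, every $G\conv\id$ with $G$ valued in $A$ lands in $T^{\geq1}A$ and satisfies $\Delta(G\conv\id)=((G\conv\id)\tensor\id)\Delta$, by coassociativity of $\Delta$.

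\emph{Step 2: $R_F$ is a right comodule map.} Write $G=F\conv\id$. Since $\mu$ is a coderivation, $\Delta\mu=(\mu\tensor\id+\id\tensor\mu)\Delta$. This gives
\[\Delta\mu G=(\mu G\tensor\id)\Delta+(-1)^{\degree{F}}(G\tensor\mu)\Delta,\]
the sign coming from moving $\mu$ (of degree $1$) past $G$. Similarly,
\[\Delta G\mu=(G\mu\tensor\id)\Delta+(G\tensor\mu)\Delta.\]
In $\Delta R_F$ the two cross terms $(G\tensor\mu)\Delta$ cancel exactly because of the sign $(-1)^{\degree{F}}$, leaving $\Delta R_F=(R_F\tensor\id)\Delta$. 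Moreover $R_F$ lands in $T^{\geq1}A$, because $\mu$ and $G$ both do. This Koszul-sign bookkeeping is the only delicate point of the argument, and I expect it to be the main obstacle.

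\emph{Step 3: identify the projection.} Since $\pr_A(F\conv\id)=F$ (only the term $k=n$ lands in $A$), we get
\[\pr_AR_F=\pr_A\mu(F\conv\id)-(-1)^{\degree{F}}F\mu=D_\mu(F),\]
which is the stated convolution form of $D_\mu$. Steps 1–3 together give $D_\mu(F)\conv\id=R_F$, which is the identity claimed in the lemma.

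\emph{Consequences.} The same argument applied to $D_\mu(D_\mu F)$, combined with injectivity of $\iota_A$ (again from Step 1), shows that $D_\mu^2=0$ follows from $\liederivative{\mu}^2=0$, which in turn holds since $\mu^2=0$. Consequently $\iota_A$ is a cochain map.
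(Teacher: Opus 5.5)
Your argument is correct in substance, but it takes a different route from the paper. For this lemma the paper gives no argument beyond ``straightforward to check'', i.e.\ a direct expansion on words. That expansion can be organised as two identities, both convolutions in $\Hom(TA,TA)$. The first is $\mu(F\conv\id)=\big(\pr_A\mu(F\conv\id)\big)\conv\id+(-1)^{\degree{F}}F\conv\mu$, obtained by splitting according to whether $\mu_j$ absorbs the letter $F(a_1\tensor\cdots\tensor a_k)$ or acts strictly to its right. The second is $(F\conv\id)\mu=(F\mu)\conv\id+F\conv\mu$, which follows from $\Delta\mu=(\mu\tensor\id+\id\tensor\mu)\Delta$. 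Subtracting, the $F\conv\mu$ terms cancel and what is left is $D_\mu(F)\conv\id$.

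You instead use cofreeness of $T^{\geq1}A\cong A\tensor TA$ as a right $TA$-comodule. This is the structural viewpoint the paper brings in only later, in the remark after Lemma~\ref{lem:AABIso} (observations (1), (2) and (4)). The direct expansion is more elementary and shows the formula for $D_\mu$ emerging term by term. Your route explains why the identity must hold: the only cancellation is the coderivation cross term. It also gives injectivity of $\iota_A$, hence $D_\mu^2=0$ from $\mu^2=0$, which the paper merely asserts. And it is the same mechanism that proves Lemma~\ref{lem:AABIso}.

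One point needs repair. As written, $\Delta R=(R\tensor\id)\Delta$ has no nonzero solutions $R$ valued in $T^{\geq1}A$. Indeed, $\Delta R(\mathbf a)$ contains the summand $1\tensor R(\mathbf a)\in T^{0}A\tensor TA$, while $(R\tensor\id)\Delta(\mathbf a)$ lies in $T^{\geq1}A\tensor TA$. So your ``conversely'' claim for $G\conv\id$ is literally false, and each displayed formula in Step 2 drops a term. With $1\tensor H$ denoting $\mathbf a\mapsto 1\tensor H(\mathbf a)$, the correct formulas are $\Delta\mu G=1\tensor\mu G+(\mu G\tensor\id)\Delta+(-1)^{\degree{F}}(G\tensor\mu)\Delta$ and $\Delta G\mu=1\tensor G\mu+(G\mu\tensor\id)\Delta+(G\tensor\mu)\Delta$.

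The right coaction on $A\tensor TA$ is $\id_A\tensor\Delta$, i.e.\ $\Delta$ with the summand $1\tensor(\argument)$ removed. With that coaction, Step 1 goes through verbatim, since $\pr_A$ kills the extra summand anyway. Step 2 then gives $\Delta R_F=1\tensor R_F+(R_F\tensor\id)\Delta$, which is exactly the corrected comodule condition.

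Also, the first formula uses $\mu(1)=0$ to discard $(\mu\tensor\id)(1\tensor G)=\mu(1)\tensor G$. This hypothesis is part of the definition of an $A_\infty[1]$ algebra, and it is genuinely needed: without it that term survives and $R_F$ is not a comodule map. You should invoke it explicitly.
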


Recall that an isomorphism of $A_{\infty}[1]$ algebras $\Phi:(A,\mu) \to (B,\nu)$ is an isomorphism of the graded tensor coalgebras
\[ \Phi:(TA,\Delta_{A})\to (TB,\Delta_{B}) \]
such that $\nu\Phi=\Phi\mu$.

\begin{lemma}\label{lem:AABIso}
Let $\Phi:(A,\mu) \to (B,\nu)$ be an isomorphism of $A_{\infty}[1]$ algebras. 
Then, for $F\in \Hom(TA,A)$, the assignment 
\[ F\mapsto \pr_{B}\Phi (\iota_{A}(F)) \Phi^{-1} \]
defines an isomorphism of cochain complexes
\[\widetilde{\gamma}_{\Phi}: (\Hom(TA,A), D_{\mu}) \to (\Hom(TB,B), D_{\nu}) \, .\]
Moreover, $\widetilde{\gamma}_{\Phi}$ is an isomorphism of filtered cochain complexes with respect to the filtration~\eqref{eq:AFilt}.
\end{lemma}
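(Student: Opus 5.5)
We prove Lemma~\ref{lem:AABIso} in three steps: well-definedness (landing in $\Hom(TB,B)$), the chain map property, and the inverse together with the filtration.

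\textbf{Well-definedness and inverse.} By definition $\widetilde{\gamma}_{\Phi}(F)=\pr_B\Phi(F\conv\id)\Phi^{-1}$, which is a composite $TB\to TA\to TA\to TB\to B$ and so lies in $\Hom(TB,B)$. For the inverse we plan to take $\widetilde{\gamma}_{\Phi^{-1}}$. The key step is to show
\[
\Phi(F\conv\id)\Phi^{-1}=\widetilde{\gamma}_{\Phi}(F)\conv\id_{TB}.
\]
That is, conjugation by $\Phi$ preserves the image of $\iota$. For this we characterise the image of $\iota_A$: an endomorphism $\Theta$ of $TA$ lies in it exactly when it is a ``left-coderivation'' relative to $\id$, namely $\Delta\Theta=(\Theta\otimes\id)\Delta$ after a suitable projection. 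Concretely, $F\conv\id$ is the unique map $\Theta$ with
\[
\Delta\Theta=(\Theta\tensor\id)\Delta
\]
and $\pr_A\Theta=F$. Both sides are determined by the corolla $\pr_A$, as in the cofree description in Appendix~\ref{sec:coAlgSV}. Since $\Phi$ is a coalgebra map, $\Delta\Phi=(\Phi\tensor\Phi)\Delta$, so $\Phi\Theta\Phi^{-1}$ satisfies the same identity on $TB$. Uniqueness then gives the displayed formula. Composing, we get $\widetilde{\gamma}_{\Phi^{-1}}\widetilde{\gamma}_{\Phi}=\id$ and symmetrically $\widetilde{\gamma}_{\Phi}\widetilde{\gamma}_{\Phi^{-1}}=\id$.

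\textbf{Chain map.} The displayed identity says $\iota_B\widetilde{\gamma}_{\Phi}=\mathrm{Ad}_{\Phi}\iota_A$, where $\mathrm{Ad}_{\Phi}(\Theta)=\Phi\Theta\Phi^{-1}$. Since $\nu\Phi=\Phi\mu$, $\mathrm{Ad}_{\Phi}$ intertwines $\liederivative{\mu}$ and $\liederivative{\nu}$. By Lemma~\ref{lem:AABDiff}, $\iota_A$ and $\iota_B$ are chain maps. Hence
\[
\iota_B(D_\nu\widetilde{\gamma}_{\Phi}F)=\liederivative{\nu}\iota_B\widetilde{\gamma}_{\Phi}F=\mathrm{Ad}_\Phi\liederivative{\mu}\iota_AF=\mathrm{Ad}_\Phi\iota_A(D_\mu F)=\iota_B\widetilde{\gamma}_{\Phi}(D_\mu F).
\]
Since $\pr_B\iota_B=\id$ (because $\pr_B(G\conv\id)=G$), $\iota_B$ is injective, and so $D_\nu\widetilde{\gamma}_\Phi=\widetilde{\gamma}_\Phi D_\mu$.

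\textbf{Filtration.} $\Phi^{-1}$ maps $T^{n}B$ into $T^{\le n}A$, and $F\conv\id$ sends $T^{\le n}$ to $T^{\le n}$. So if $F$ vanishes on $T^{<p}A$, then $F\conv\id$ kills $T^{<p}A$ modulo terms coming from $F$ evaluated on tensor factors of length at least $p$. Precisely, $(F\conv\id)(a_1\tensor\cdots\tensor a_n)=\sum F(a_1\cdots a_k)\tensor a_{k+1}\cdots$, and every term has $k\le n<p$, so it vanishes. Thus $\widetilde{\gamma}_\Phi(\mathcal F^p)\subset\mathcal F^p$, and the same holds for the inverse.

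We expect the main obstacle to be the uniqueness characterisation of maps of the form $F\conv\id$ and checking that it is stable under conjugation by a coalgebra automorphism, including Koszul signs.
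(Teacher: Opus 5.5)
Your overall route is the paper's: the square $\Phi(F\conv\id)\Phi^{-1}=\widetilde{\gamma}_{\Phi}(F)\conv\id$, conjugation intertwining $\liederivative{\mu}$ with $\liederivative{\nu}$, Lemma~\ref{lem:AABDiff}, and $\pr_B\iota_B=\id$. Your comodule justification of the square is the structural argument in the remark after the paper's proof. Your explicit inverse $\widetilde{\gamma}_{\Phi^{-1}}$ and your filtration argument are fine; the paper leaves invertibility implicit.

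The one step that fails as written is the identity you use to characterise the image of $\iota_A$. The map $\Theta=F\conv\id$ does \emph{not} satisfy $\Delta\Theta=(\Theta\tensor\id)\Delta$. Already at $1\in T^{0}A$ one has $\Delta\Theta(1)=1\tensor F(1)+F(1)\tensor 1$, but $(\Theta\tensor\id)\Delta(1)=F(1)\tensor 1$. Applying $\epsilon\tensor\id$ shows that every solution of your identity has the form $\Theta=(\epsilon\Theta)\conv\id$. Such a solution is therefore determined by $\epsilon\Theta\colon TA\to\KK$, not by $\pr_A\Theta$. Since $\epsilon(F\conv\id)=0$, your identity holds for $F\conv\id$ only when $F=0$. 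So the ``uniqueness'' you invoke does not yield the square.

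The correct identity is
\[
\Delta\Theta=1\tensor\Theta+(\Theta\tensor\id)\Delta .
\]
It says exactly that $\Theta$ lands in $T^{\geq 1}A=A\tensor TA$ and is a morphism of right $TA$-comodules into the cofree comodule $(A\tensor TA,\id_A\tensor\Delta)$; this is your ``suitable projection'' made precise. The identity holds for $F\conv\id$. Conversely, it forces $\epsilon\Theta=0$: apply $\id\tensor\epsilon$ to get $\Theta=(\epsilon\Theta)\cdot 1+\Theta$. It also forces $\Theta=(\pr_A\Theta)\conv\id$: apply $\pr_A\tensor\id$ and concatenate. On $T^{\geq1}A$ this returns the identity, and it kills the extra term $1\tensor\Theta$.

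To transport the identity along conjugation you also need $\Phi(1)=1$, so that $(\Phi\tensor\Phi)(1\tensor\Theta\Phi^{-1})=1\tensor\Phi\Theta\Phi^{-1}$. This holds for coalgebra morphisms of tensor coalgebras, but it is exactly where the extra term gets handled, so it should be stated. No Koszul signs arise, since $\id$ and $\Phi$ have degree $0$. With this correction, the rest of your argument goes through unchanged.
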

\begin{proof}
Denote by $\gamma_{\Phi}:(\Hom(TA,TA),\liederivative{\mu}) \to (\Hom(TB,TB),\liederivative{\nu})$ the morphism of cochain complexes defined as the conjugation by $\Phi$, i.e., $\gamma_{\Phi}(\widetilde{F})= \Phi \widetilde{F} \Phi^{-1}$ for $\widetilde{F}\in \Hom(TA,TA)$. 

Direct computation (see the remark below for a structural perspective) shows that 
the following diagram is commutative:
\[ 
\begin{tikzcd}
(\Hom(TA,TA), \liederivative{\mu}) \arrow{r}{\gamma_{\Phi}} & (\Hom(TB,TB), \liederivative{\nu})\\
(\Hom(TA,A), D_{\mu})\arrow{u}{\iota_{A}} \arrow{r}{\widetilde{\gamma}_{\Phi}} & (\Hom(TB,B),D_{\nu}) \arrow{u}{\iota_{B}}\\
\end{tikzcd}
\]
Since $\pr_{B}\iota_{B}$ is the identity on $\Hom(TB,B)$, and by Lemma~\ref{lem:AABDiff}, we have
\[ D_{\nu}\widetilde{\gamma}_{\Phi}=\pr_{B}\iota_{B}D_{\nu}\widetilde{\gamma}_{\Phi}=\pr_{B}\liederivative{\nu}\gamma_{\Phi}\iota_{A}=\pr_{B}\gamma_{\Phi}\iota_{A}D_{\mu}=\widetilde{\gamma}_{\Phi} D_{\mu}\, . \]
The statement for the filtration follows immediately from the fact that $\Phi(T^{\leq n}(A))\subset T^{\leq n}(B)$.
This completes the proof.
\end{proof}

\begin{remark}
The commutativity of the diagram in the proof can also be seen structurally using the notion of comodules, rather than direct computation; it relies on the following four observations:
\begin{enumerate}
\item The map $\iota_{A}$ factors through $\Hom(TA, A\tensor TA)\subset \Hom(TA,TA)$, by viewing $A\tensor TA$ as $T^{\geq1}(A)$.
\item Since $TA$ is a coalgebra, both $\KK\tensor TA =TA$ and $A\tensor TA$ are right cofree comodules. 
\item Since $\Phi:TA\to TB$ is an isomorphism of coalgebras, if $G:TA\to A\tensor TA$ is a morphism of right $TA$-comodules, then $\gamma_{\Phi}(G):TB\to B\tensor TB$ is a morphism of right $TB$-comodules.
\item Since $A\tensor TA$ is cofree, any morphism of right $TA$-comodules $G:TA\to A\tensor TA$ satisfies $G=\iota_{A}(\pr_{A}G)$.
\end{enumerate}
\end{remark}

The following theorem is an $A_{\infty}[1]$ analogue of Theorem~\ref{thm:Bandiera}. Note that the symbol $A^{R}$ denotes the $A_{\infty}[1]$ $(A,A)$-bimodule described in Remark~\ref{rem:AAR}.
\begin{theorem}\label{thm:ATFAE}
Let $(A,\mu)$ be an $A_{\infty}[1]$ algebra. With the notation above, the following are equivalent.
\begin{enumerate}[label=\rm{(\Roman*)}]
\item \label{item:AA} 
There is an $A_{\infty}[1]$ algebra isomorphism $\Phi=(\phi_{1},\phi_{2},\ldots): (A, \mu_{1}) \to (A, \mu)$ with $\phi_{1}=\id$.
\item \label{item:AB} 
There exists a DG right inverse $\tau$ of the cochain map $\ev_{1}:(\Hom(TA,A),D_{\mu}) \to (A, \mu_{1})$. 
\item \label{item:AC}
The natural (Hochschild) spectral sequence computing $\HH(A,A^{R})$ degenerates at $E_{1}$.
\end{enumerate}
We call an $A_{\infty}[1]$ algebra satisfying the above conditions \textbf{linearisable}. 
\end{theorem}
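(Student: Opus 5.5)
I would prove \ref{item:AA}~$\Rightarrow$~\ref{item:AB}~$\Rightarrow$~\ref{item:AA} by transporting the argument of Theorem~\ref{thm:First-1} to the tensor coalgebra, and then handle \ref{item:AB}~$\Leftrightarrow$~\ref{item:AC} by a filtration argument.

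For \ref{item:AA}~$\Rightarrow$~\ref{item:AB}, start with the abelian algebra $(A,\mu_{1})$. There the constant map $\tau_{0}:a\mapsto(\mathbf{a}\mapsto a\text{ if }\mathbf{a}=1,\ 0\text{ otherwise})$, i.e.\ $\tau_0(a)$ supported on $T^{0}A$, is a DG right inverse of $\ev_{1}$. Indeed, $D_{\mu_{1}}$ of such an element only involves $\mu_{1}(F(1))-(-1)^{|F|}F\mu_{1}$, and $\mu_{1}$ acts by the induced coderivation, which kills $T^{0}$. Now let $\Phi:(A,\mu_1)\to(A,\mu)$ be given with $\phi_{1}=\id$. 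Lemma~\ref{lem:AABIso} gives a cochain isomorphism $\widetilde{\gamma}_{\Phi}$, and $\widetilde{\gamma}_{\Phi}$ commutes with $\ev_{1}$ because $\Phi(1)=1$ and $\phi_{1}=\id$. So $\tau=\widetilde{\gamma}_{\Phi}\tau_{0}$ is the required right inverse. Explicitly, $\tau(a)=\pr_A\Phi(a\otimes\Phi^{-1}(-))$, exactly as in the symmetric case.

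For \ref{item:AB}~$\Rightarrow$~\ref{item:AA}, write $\tau(a)=\sum_{k}\tau^{a}_{k}$ with $\tau^{a}_{k}:T^{k}A\to A$ and $\tau^a_0(1)=a$. The symmetrisation $\frac1{n+1}\sum_i\sign_i$ is no longer available, since $TA$ is not cocommutative. Instead, I would insert the distinguished element in the first slot, which is consistent with the left-cofree structure $A\otimes TA$ used in $\iota_A$. So set $\phi_{1}=\id$ and
\[
\phi_{n+1}(a_{1}\otimes\cdots\otimes a_{n+1})=\sum_{k=1}^{n}\tau^{a_{1}}_{k}\,\phi^{k}_{n}(a_{2}\otimes\cdots\otimes a_{n+1}),
\]
which mimics $\Phi(a\otimes\Phi^{-1}(\mathbf{b}))$ read backwards. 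I then prove $\phi_{n}\widetilde{\mu}_{1}=\sum_{k}\mu_{k}\phi_{n}^{k}$ by induction on $n$, copying the computation in Theorem~\ref{thm:First-1}, with these substitutions:
\begin{itemize}
\item the DG condition $D_{\mu}\tau(a)=\tau(\mu_{1}a)$ replaces \eqref{eq:Qn};
\item tensor-coalgebra analogues of Lemmas~\ref{lem:QNPNK}, \ref{lem:PNKQ} and~\ref{lem:PNM} replace the symmetric ones, with the convolution $\conv$ now being concatenation;
\item the last step $\sum_{r}(r+1)\phi_{r+1}\conv\phi^{m-1}_{n-r}$ becomes simply $\phi^{m}_{n+1}=\sum_r\phi_{r+1}\conv\phi^{m-1}_{n-r}$, with no weights, because peeling off $a_{1}$ is canonical.
\end{itemize}
The main obstacle is checking that this first-slot recursion matches the first term $\mu_{n+1}(F(1)\otimes\mathbf a)$ in $D_\mu$ and the comodule structure. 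If it does not, I would instead use last-slot insertion, which corresponds to the right-comodule convention. Invertibility of $\Phi$ follows from $\phi_{1}=\id$.

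For \ref{item:AB}~$\Leftrightarrow$~\ref{item:AC}, use the filtration \eqref{eq:AFilt}. We have $\mathcal{F}^{0}/\mathcal{F}^{1}\cong(A,\mu_{1})$ via $\ev_{1}$, so \ref{item:AB} says the projection of the filtered complex onto its bottom graded piece splits DG. Given \ref{item:AA}, Lemma~\ref{lem:AABIso} reduces degeneration to the abelian case. There $D_{\mu_{1}}$ preserves tensor length, so the filtered complex is a direct product of its graded pieces and the spectral sequence degenerates at $E_{1}$. Conversely, assume degeneration at $E_{1}$. Then every $\mu_{1}$-cocycle in $E_{0}^{0}=A$ survives and lifts to a $D_\mu$-cocycle. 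More strongly, I would argue as Bandiera does: degeneration implies that the map $\ev_{1}$, viewed from $\mathcal F^0$ to $\mathcal F^0/\mathcal F^1$, is a surjective quasi-isomorphism onto the quotient up to the filtration. I would then build $\tau$ degree by degree in the filtration, solving the obstruction at each stage; the obstructions are exact precisely because all higher differentials $d_{r}$, $r\ge1$, vanish. Choosing a basis splitting of $A$ into cocycles, coboundaries and a complement makes the lift a chain map. This last direction is the most delicate, and I expect it to need a careful choice of compatible lifts over a field.
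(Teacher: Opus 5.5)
Your proposal is correct. For \ref{item:AB}$\Rightarrow$\ref{item:AA} and \ref{item:AA}$\Rightarrow$\ref{item:AC} it is the paper's argument.
\begin{itemize}
\item Your first-slot recursion is exactly \eqref{eq:AFN+1}. The fallback to last-slot insertion is unnecessary: evaluating $\tau(a)=\pr_{A}\Phi(a\otimes\Phi^{-1}(-))$ on $\Phi(\mathbf b)$ forces $\phi_{n+1}(a\otimes\mathbf b)=\sum_k\tau^a_k\phi^k_n(\mathbf b)$.
\item You also make explicit the degeneration for $(A,\mu_1)$, which the paper leaves tacit.
\end{itemize}

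You differ from the paper in two places.

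For \ref{item:AA}$\Rightarrow$\ref{item:AB}, the paper checks $D_\mu\tau(a)=\tau(\mu_1 a)$ by a direct computation with the components of $\Phi$. You instead transport the trivial splitting of $(A,\mu_1)$ along $\widetilde\gamma_\Phi$ from Lemma~\ref{lem:AABIso}. This needs only $\ev_1\widetilde\gamma_\Phi=\ev_1$, which follows from $\Phi(1)=1$ and $\phi_1=\id$. The resulting $\tau$ is the same, and the lemma absorbs all the sign bookkeeping.

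For \ref{item:AC}$\Rightarrow$\ref{item:AB}, the paper proceeds as follows:
\begin{itemize}
\item it uses the long exact sequence of $0\to\Hom(\overline{T}A,A)\to\Hom(TA,A)\to A\to0$;
\item it cites a lemma that $E_1$-degeneration kills the connecting map $[\delta]$;
\item it takes a null-homotopy $s$ of $\delta$.
\end{itemize}
You prove the surjectivity of $H(\mathcal F^0)\to H(A,\mu_1)$ directly by killing obstructions. You then define $\tau$ through a splitting of $A$:
\begin{itemize}
\item cocycle lifts on a complement of the coboundaries inside the cocycles;
\item $\tau(c)=c$ on a complement $C$ of the cocycles;
\item $\tau(\mu_1c):=D_\mu\tau(c)$.
\end{itemize}
This is the same logic, made self-contained and adapted directly to the infinite, complete filtration.

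Two statements in that last part need repair.

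First, $\ev_1\colon\mathcal F^0\to\mathcal F^0/\mathcal F^1$ is not a quasi-isomorphism. Already for $(A,\mu_1)$ one has $H(\mathcal F^0)=\prod_k H(\Hom(T^kA,A))$. Only surjectivity on cohomology holds, and only that is needed.

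Second, you should say why the stage-$n$ obstruction vanishes whatever the earlier choices were. Take a $\mu_1$-cocycle $a$ and the partial sum $x=a+\tau^a_1+\cdots+\tau^a_{n-1}$.
\begin{itemize}
\item $x$ satisfies $D_\mu x\in\mathcal F^n$ and represents $[a]$ on page $n$.
\item Since all $d_r$ vanish, $E_n=E_1$ in filtration degree $n$.
\item So $d_n[a]=0$ means exactly that the $T^n$-component of $D_\mu x$ is exact for the tensor-length-preserving part of $D_\mu$.
\end{itemize}
Hence $\tau^a_n$ can be taken in $\Hom(T^nA,A)$, and the resulting infinite sum converges in $\prod_k\Hom(T^kA,A)$.
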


\begin{proof}
We first show that \ref{item:AA} $\Leftrightarrow$ \ref{item:AB}. 

Assume that \ref{item:AA} holds. Define a map
$\tau: A \to \Hom(TA,A)$
by
\[\tau(a) \mathbf{a} = \pr_{A} \Phi(a\tensor \Phi^{-1}(\mathbf{a})) \, ,\]
where $a\in A$ and $\mathbf{a}\in TA$. It is straightforward to check that $\ev_{1} (\tau(a))=a$ and 
\begin{eqnarray*}
\tau(\mu_{1}(a))\mathbf{a} &=&\pr_{A} \Phi(\mu_{1}(a)\tensor \Phi^{-1}(\mathbf{a})) \\
&=& \pr_{A} \Phi\mu_{1}(a\tensor \Phi^{-1}(\mathbf{a})) -(-1)^{\degree{a}} \pr_{A} \Phi (a\tensor \mu_{1} \Phi^{-1}(\mathbf{a})) \\
&=&\pr_{A} \mu \Phi (a\tensor \Phi^{-1}(\mathbf{a})) -(-1)^{\degree{a}} \pr_{A} \Phi (a\tensor \Phi^{-1}\mu( \mathbf{a}))\\
&=& \pr_{A}\mu (\tau(a)\conv \id)\mathbf{a}  - (-1)^{\degree{a}} \tau(a)(\mu(\mathbf{a})) \\
&=& D_{\mu}(\tau(a))\mathbf{a} \, .
\end{eqnarray*}
Here, the fourth equality holds since, for $\Phi^{-1}(\mathbf{a})=\mathbf{b}=b_{1}\tensor \cdots \tensor b_{n} \in T^{n}(A)$, 
\[ \Phi(a\tensor \mathbf{b}) = \phi_{1}(a)\tensor \Phi(\mathbf{b}) +\phi_{n+1}(a\tensor \mathbf{b}) + \sum_{k=2}^{n} \phi_{k}(a\tensor b_{1}\tensor \cdots \tensor b_{k-1})\tensor \Phi(b_{k}\tensor \cdots \tensor b_{n}) \, \]
and $\Delta(\mathbf{b})=(\Phi^{-1}\tensor \Phi^{-1})\Delta \mathbf{a}$.
This shows that \ref{item:AA} $\Rightarrow$ \ref{item:AB}. 

Conversely, assume \ref{item:AB} holds. For each $a\in A$, we write $\tau(a)=\sum_{n=0}^{\infty}\tau_{n}^{a}$ where $\tau_{n}^{a}:T^{n}(A)\to A$.
Then the relation $\tau(\mu_{1}(a))=D_{\mu}(\tau(a))$ reads
\begin{equation}\label{eq:ATAU}
\tau_{n}^{\mu_{1}(a)}
= \sum_{k=0}^{n}\mu_{n-k+1}(\tau_{k}^{a}\conv (\id_{A}^{\tensor n-k})) 
- (-1)^{\degree{a}} \sum_{k=1}^{n} \tau_{k}^{a} \widetilde{\mu}_{n-k+1} \, .
\end{equation}

Similarly to the proof of Theorem~\ref{thm:First-1}, we claim that the $A_{\infty}[1]$ isomorphism 
\[\Phi=(\phi_{1},\phi_{2},\ldots) : (TA, \mu_{1}) \to (TA, \mu)\]
is defined inductively by $\phi_{1}=\id_{A}$ and 
\begin{equation}\label{eq:AFN+1}
\phi_{n+1}(a \tensor \mathbf{a})=\sum_{k=1}^{n}\tau_{k}^{a} \phi_{n}^{k}(\mathbf{a})
\end{equation}
where $\mathbf{a}\in T^{n}(A)$ and $\phi_{n}^{k}:T^{n}(A)\into TA \xto{\Phi} TA \to T^{k}(A)$. 
We use induction to prove the claim. 
Since $\phi_{1}=\id_{A}$, we have $\phi_{1}\widetilde{\mu}_{1}=\mu_{1}\phi_{1}$. 
Assume that for $m\leq n$, 
\[\phi_{m}\widetilde{\mu}_{1}= \sum_{k=1}^{m}\mu_{k}\phi_{m}^{k} \]
holds. Then, for any $\mathbf{b}\in T^{m}(A)$, we have $\Phi \widetilde{\mu}_{1}(\mathbf{b})=\mu \Phi (\mathbf{b})$, and in particular, for any $k \leq m$, we have
\begin{equation}\label{eq:AFMK}
 \phi_{m}^{k}\widetilde{\mu}_{1} = \sum_{r=k}^{m} \widetilde{\mu}_{r-k+1}\phi_{m}^{r} \, . 
\end{equation}
Now, similarly to the proof of Theorem~\ref{thm:First-1}, by~\eqref{eq:ATAU},~\eqref{eq:AFN+1}, and~\eqref{eq:AFMK}, direct computation shows that
\begin{eqnarray*}
 (\phi_{n+1}\widetilde{\mu}_{1}-\mu_{1}\phi_{n+1})(a\tensor \mathbf{a}) 
&=& \sum_{k=1}^{n} 
(\tau_{k}^{\mu_{1}(a)} \phi_{n}^{k}+(-1)^{\degree{a}} \tau_{k}^{a}\phi_{n}^{k}\widetilde{\mu}_{1} - \mu_{1}\tau_{k}^{a} \phi_{n}^{k})(\mathbf{a})\\
&=& \sum_{k=1}^{n} (\tau_{k}^{\mu_{1}(a)}+(-1)^{\degree{a}}\big(\sum_{r=1}^{k}\tau_{r}^{a}\widetilde{\mu}_{k-r+1}\big) - \mu_{1}\tau_{k}^{a})\phi_{n}^{k}(\mathbf{a}) \\
&=&\sum_{k=1}^{n}\sum_{r=0}^{k-1}\mu_{k-r+1}(\tau_{r}^{a}\conv (\id_{A}^{\tensor k-r}))\phi_{n}^{k}(\mathbf{a})\\
&=&\sum_{k=1}^{n}\big( \mu_{k+1}(a\tensor \phi_{n}^{k}(\mathbf{a})) + \sum_{r=1}^{k-1}\sum_{s=r}^{n-k+r} \mu_{k-r+1}(\tau_{r}^{a}\phi_{s}^{r}\conv \phi_{n-s}^{k-r})(\mathbf{a}) \big)\\
&=& \sum_{k=1}^{n}\mu_{k+1}\phi^{k+1}_{n+1}(a\tensor \mathbf{a})
\end{eqnarray*}
for $a\in A$ and $\mathbf{a}\in T^{n}(A)$. This proves \ref{item:AB}~$\Rightarrow$~\ref{item:AA}.

By Lemma~\ref{lem:AABIso}, an isomorphism of $A_{\infty}[1]$ algebras induces an isomorphism of filtered cochain complexes. Therefore, it induces an isomorphism of the spectral sequence at every page, proving \ref{item:AA} $\Rightarrow$ \ref{item:AC}.

Finally, we show \ref{item:AC}~$\Rightarrow$~\ref{item:AB}. 
Let $\overline{T}A = \bigoplus_{n=1}^{\infty}A^{\otimes n}$.
Then we have a short exact sequence of cochain complexes
\[ 0\to (\Hom(\overline{T}A,A), D_{\mu}) \to (\Hom(TA,A), D_{\mu}) \to (A, \mu_{1}) \to 0 \, ,\]
and it induces a long exact sequence
\[\to H^{\bullet}(\Hom(\overline{T}A,A), D_{\mu}) \to H^{\bullet}(\Hom(TA,A), D_{\mu}) \to H^{\bullet}(A, \mu_{1}) \xto{[\delta]} H^{\bullet+1}(\Hom(\overline{T}A,A), D_{\mu}) \to\]
where the connecting homomorphism $[\delta]$ is induced by the cochain map
\[ \delta : (A,\mu_{1})\to (\Hom(\overline{T}A,A)[1], \pm D_{\mu}) \]
defined by
\[ \delta (a)= D_{\mu}(a)-\mu_{1}(a) \, . \]

It is well-known (c.f.~\cite[Lemma~A.42]{MR2393625}) that the condition~\ref{item:AC} implies the vanishing of the connecting homomorphism $[\delta]$.
Therefore, there exists a degree-preserving map $s:A\to \Hom(\overline{T}A,A)$ such that $\delta(a)=D_{\mu}(s(a)) - s(\mu_{1}(a))$. 
Viewing the element $a\in A$ as an element of $\Hom(\KK,A)\subset \Hom(TA,A)$, we define $\tau:A\to \Hom(TA,A)$ by $\tau(a)=a - s(a)$. Then $\tau$ is a DG right inverse of $\ev_{1}$, thus establishing~\ref{item:AB}.
This completes the proof.
\end{proof}

As a special case, the linearisability of a DG algebra is characterised as follows.
\begin{corollary}
A DG algebra $(A,d,m)$ is linearisable if and only if
there exist maps $\phi_{n}:A\tensor A^{\tensor n}\to A$ of degree $(-n)$ for $n\geq 1$ such that
\[ 
m(a, b)=d\phi_{1}(a\tensor b)+\phi_{1}(da\tensor b) +(-1)^{\degree{a}}\phi_{1}(a\tensor db)
\]
and for $n\geq 2$, 
\begin{align*}
& m(\phi_{n-1}(a\tensor b_{1}\tensor \cdots \tensor b_{n-1}), b_{n}) -(-1)^{n} \sum_{i=1}^{n-1}(-1)^{i}\phi_{n-1}(a\tensor b_{1}\tensor \cdots \tensor m(b_{i},b_{i+1}) \tensor \cdots \tensor b_{n}) \\
&\begin{multlined}[0.92\displaywidth]
= d(\phi_{n}(a\tensor b_{1}\tensor \cdots \tensor b_{n})) -(-1)^{n}\phi_{n}(da\tensor b_{1}\tensor \cdots \tensor b_{n}) \\
-(-1)^{n} \sum_{i=1}^{n}(-1)^{\degree{a}+\degree{b_{1}}+\cdots +\degree{b_{i-1}}} \phi_{n}(a\tensor b_{1}\tensor \cdots \tensor db_{i}\tensor \cdots \tensor b_{n})
\end{multlined}
\end{align*}
for homogeneous $a, b, b_{1},\ldots, b_{n} \in A$.
\end{corollary}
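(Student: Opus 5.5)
The plan is to specialise Theorem~\ref{thm:ATFAE}, using the equivalence \ref{item:AA}~$\Leftrightarrow$~\ref{item:AB}, to the case of a DG algebra. We regard $(A,d,m)$ as an $A_{\infty}[1]$ algebra on the shift $A[1]$, with $\mu_{1}$ induced by $d$, $\mu_{2}$ induced by $m$, and $\mu_{k}=0$ for $k\geq 3$. By definition, linearisability means condition~\ref{item:AA}. By Theorem~\ref{thm:ATFAE} this is equivalent to the existence of a degree-preserving map $\tau:A[1]\to\Hom(T(A[1]),A[1])$ with $\ev_{1}\tau=\id$ and $\tau\mu_{1}=D_{\mu}\tau$. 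We will translate this condition into the displayed identities.

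First, write $\tau(a)=\sum_{n\geq 0}\tau^{a}_{n}$. The condition $\ev_{1}\tau=\id$ forces $\tau^{a}_{0}(1)=a$. The remaining components $\tau^{a}_{n}:T^{n}(A[1])\to A[1]$ with $n\geq 1$ are unconstrained degree-zero data. After undoing the shift they correspond to maps $\phi_{n}:A\tensor A^{\tensor n}\to A$ of degree $-n$, defined by $\phi_{n}(a\tensor b_{1}\tensor\cdots\tensor b_{n})=\pm\tau^{a}_{n}(b_{1}\tensor\cdots\tensor b_{n})$ with the usual décalage signs. Next, we read off the DG condition componentwise using \eqref{eq:ATAU}. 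Since $\mu_{k}=0$ for $k\geq 3$, only the terms with $n-k+1\in\{1,2\}$ survive. In arity $n$, equation~\eqref{eq:ATAU} becomes
\[
\tau^{\mu_{1}a}_{n}=\mu_{1}\tau^{a}_{n}+\mu_{2}(\tau^{a}_{n-1}\tensor\id)-(-1)^{\degree{a}}\big(\tau^{a}_{n}\widetilde{\mu}_{1}+\tau^{a}_{n-1}\widetilde{\mu}_{2}\big).
\]
For $n=1$ we have $\tau^{a}_{0}=a$, so the term $\mu_{2}(a\tensor b)$ appears, and $\tau^{a}_{0}\widetilde{\mu}_{2}$ vanishes for arity reasons. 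This gives the first displayed identity, which expresses $m(a,b)$ as the Hochschild-type coboundary of $\phi_{1}$. For $n\geq 2$, the terms $\mu_{2}(\tau^{a}_{n-1}\tensor\id)$ and $\tau^{a}_{n-1}\widetilde{\mu}_{2}$ give the left-hand side, involving $m(\phi_{n-1}(\ldots),b_{n})$ and the internal products $m(b_{i},b_{i+1})$. The $\mu_{1}$-terms give the right-hand side, involving $d\phi_{n}$, $\phi_{n}(da\tensor\cdots)$ and $\phi_{n}(\cdots\tensor db_{i}\tensor\cdots)$.

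The main obstacle is sign bookkeeping. We need to convert the Koszul signs of the shifted $A_{\infty}[1]$ formalism back to unshifted $A$, including the sign in $\mu_{2}(sa\tensor sb)=\pm s\,m(a,b)$ and the signs from passing $\tau^{a}_{n}$ past the shifted inputs. This is what produces the factors $(-1)^{n}$, $(-1)^{i}$ and $(-1)^{\degree{a}+\degree{b_{1}}+\cdots+\degree{b_{i-1}}}$. I will fix the convention $\phi_{n}(a\tensor b_{1}\tensor\cdots\tensor b_{n})=s^{-1}\tau^{sa}_{n}(sb_{1}\tensor\cdots\tensor sb_{n})$ and check the signs on generators, first for $n=1$ and then inductively for general $n$. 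Any residual global sign can be absorbed by rescaling $\phi_{n}\mapsto\pm\phi_{n}$. Because the correspondence between $\tau$ and the family $\{\phi_{n}\}$ is bijective and the equations match arity by arity, both implications of the corollary follow at once from Theorem~\ref{thm:ATFAE}.
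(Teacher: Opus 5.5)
Your route is the one the paper intends. The corollary is stated there without proof, as the specialisation of Theorem~\ref{thm:ATFAE}, (I)$\Leftrightarrow$(II), to $\mu_k=0$ for $k\geq 3$. Your reduction of \eqref{eq:ATAU} to the arity-$n$ equation is correct, and so is the $n=1$ case.

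The gap is the step you postpone: the sign bookkeeping does not reproduce the displayed identity for $n\geq 2$, and the discrepancy is not a global sign. Carry the translation out, for instance with $\mu_1(sa)=-s\,da$, $\mu_2(sa\tensor sb)=(-1)^{\degree{a}}s\,m(a,b)$, and $\tau^{sa}_n(sb_1\tensor\cdots\tensor sb_n)=\pm s\,\phi_n(a\tensor b_1\tensor\cdots\tensor b_n)$ with a suitable Koszul-type sign. The $\phi_n$-terms then give exactly the displayed right-hand side. However, the terms coming from $\mu_2(\tau^{a}_{n-1}\tensor\id)$ and $\tau^{a}_{n-1}\widetilde{\mu}_2$ assemble into
\[ m(\phi_{n-1}(a\tensor b_1\tensor\cdots\tensor b_{n-1}),b_n)+(-1)^{n}\sum_{i=1}^{n-1}(-1)^{i}\phi_{n-1}(a\tensor b_1\tensor\cdots\tensor m(b_i,b_{i+1})\tensor\cdots\tensor b_n), \]
with $+(-1)^n$ where the statement has $-(-1)^n$. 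For $n=2$ this reads $m(\phi_1(a\tensor b_1),b_2)-\phi_1(a\tensor m(b_1,b_2))$. A relative sign between two terms that are both linear in $\phi_{n-1}$ cannot be absorbed by rescaling $\phi_n\mapsto\pm\phi_n$.

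In fact no bookkeeping can rescue the statement as printed, because the printed system is inconsistent. Apply $g\mapsto d\circ g+g\circ d$ (with $d$ the differential of $A^{\tensor 3}$) to both sides of the $n=2$ identity. The right-hand side $d\phi_2-\phi_2 d$ is killed, since $d^2=0$. On the left-hand side, use the $n=1$ identity for $\phi_1(a\tensor b_1)$ and for $\phi_1(a\tensor m(b_1,b_2))$; it becomes $m(m(a,b_1),b_2)+m(a,m(b_1,b_2))=2\,m(m(a,b_1),b_2)$.

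Concretely, take $A=\KK[t]/(t^2)$ with $\degree{t}=-1$ and $dt=1$. Degree reasons and the $n=1$ identity force $\phi_1(1\tensor 1)=t$. The $n=2$ identity at $a=b_1=b_2=1$ then reads $2t=d\phi_2(1\tensor 1\tensor 1)=0$, because $A^{-2}=0$. Yet $A$ is linearisable: $(A[1],\mu_1)$ is contractible, hence projective among cochain complexes, so $\id$ lifts along the surjective cochain map $\ev_1$, giving (II). Indeed, $\phi_1(1\tensor 1)=t$ and $\phi_n=0$ for $n\geq 2$ solve the corrected system.

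So the ``only if'' direction is false as printed. What your argument actually proves, once the signs are computed, is the corollary with $-(-1)^n$ replaced by $(-1)^n$ in front of the sum on the left-hand side.
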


\begin{remark}
Several remarks are in order. 
First, similar to the $L_{\infty}[1]$ algebra case, it is well known that an $A_{\infty}[1]$ algebra $(A,\mu)$ is linearisable if and only if $(A,\mu)$ is $A_{\infty}[1]$ quasi-isomorphic to a linear $A_{\infty}[1]$ algebra $(B,\nu)$ (i.e., $(B,\nu)$ is merely a cochain complex viewed as an $A_{\infty}[1]$ algebra). Therefore, the linearisability of $A_{\infty}[1]$ algebras is characterised by having a trivial minimal model (i.e., all higher multiplications $\mu_{n}$ vanish on its homology).

We also remark that if an $A_{\infty}[1]$ algebra $A$ is unital, its cohomology group $H(A)$ is a unital graded algebra, unless $H(A)=0$. Consequently, any unital $A_{\infty}[1]$ algebra $A$ with $H(A)\neq 0$ cannot have a trivial minimal model, and thus cannot be linearisable.

Finally, we will address the natural question of whether an analogue of Theorem~\ref{thm:ATFAE} holds by replacing $(\Hom(TA,A),D_{\mu})$ with $(\coDer(TA),\liederivative{\mu})$.

We first observe the implication of the existence of a DG right inverse $\check{\tau}$ of $\ev_{1}:(\coDer(TA), \liederivative{\mu})\to (A,\mu_{1})$. 
Given an $A_{\infty}[1]$ algebra $(A,\mu)$, the symmetrisation of $\mu$ induces an $L_{\infty}[1]$ algebra $(A,Q)$. More precisely, $Q=q_{1}+q_{2}+\cdots$, where each $q_{k}:S^{k}(A)\to A$ is defined by
\[q_{k}(a_{1}\odot \cdots \odot a_{k})= \sum_{\sigma \in S_{k}} \sign \cdot \mu_{k}(a_{\sigma(1)}\tensor \cdots \tensor a_{\sigma(k)})\]
for $a_{1},\ldots, a_{k} \in A$, where $\sign=\sign(\sigma; a_{1},\ldots,a_{k})$ denotes the Koszul sign. 
It is well known that the symmetrisation map $\sym:SA \to TA$ induces a morphism of cochain complexes
\[ \sym^{\ast}: (\coDer(TA),\liederivative{\mu}) \to (\coDer(SA),\liederivative{Q}) \, ,\]
defined as the composition 
\[ \coDer(TA)\cong \Hom(TA,A) \xto{\sym^{\ast}} \Hom(SA,A) \cong \coDer(SA)\, . \]
Thus, any morphism of cochain complexes
\[ \check{\tau}: (A,\mu_{1}) \to (\coDer(TA),\liederivative{\mu}) \]
satisfying $\check{\tau}(a):1 \mapsto a$ naturally induces a morphism of cochain complexes
\[ \tau:(A,\mu_{1})\to (\coDer(SA),\liederivative{Q}) \, .\]
Consequently, the induced $L_{\infty}[1]$ algebra $(A,Q)$ must be linearisable. 

However, the existence of such $\check{\tau}$ does not guarantee the linearisability of the $A_{\infty}[1]$ algebra $(A,\mu)$.
For example, consider an $A_{\infty}[1]$ algebra $(A,\mu)$ arising from an ordinary commutative algebra. Then $A$ is concentrated in degree $-1$ and $\mu=\mu_{2}:A\tensor A\to A$ satisfies $\mu_{2}(a\tensor b)-\mu_{2}(b\tensor a)=0$. It is straightforward to check that
\[\check{\tau}:(A,0) \to (\coDer(TA),\liederivative{\mu})\]
defined by
\begin{align*}
\check{\tau}(a) : (b_{1}\tensor \cdots \tensor b_{n}) \mapsto a\tensor b_{1}\tensor \cdots \tensor b_{n} &+  (-1)^{n} b_{1}\tensor \cdots \tensor b_{n}\tensor a\\
&+\sum_{i=1}^{n-1} (-1)^{i} b_{1}\tensor \cdots \tensor b_{i}\tensor a \tensor b_{i+1} \tensor \cdots \tensor b_{n} 
\end{align*}
is indeed a morphism of cochain complexes satisfying $\check{\tau}(a):1\mapsto a$. However, any $A_{\infty}[1]$ algebra $(A,\mu=\mu_{2})$ arising from an ordinary commutative algebra is linearisable if and only if $\mu_{2}=0$. 

It is worth noting that the equivalence of~\ref{item:AA} and~\ref{item:AC} in Theorem~\ref{thm:ATFAE} still holds if one replaces $\HH(A,A^{R})$ in Theorem~\ref{thm:ATFAE}~\ref{item:AC} with $\HH(A,A)$; one can use the formality criteria in~\cite{MR3936682} for the operad of associative algebras, together with the $A_{\infty}$ version of~\cite[Lemma~6.1]{MR3353026}.
\end{remark}

\section*{Acknowledgements} 
We would like to thank Ping Xu and Ruggero Bandiera for helpful comments and discussions. The authors are grateful to the University of Genova (Seol and Wang), Pennsylvania State University (Seol and Wang), Sapienza Università di Roma (Seol), and Korea Institute for Advanced Study (Wang) for their hospitality during part of this work.

\appendix

\section{Graded coalgebras and graded coderivations}\label{sec:DGcoAlg}

In this appendix, we review the basics of graded coalgebras, $L_{\infty}$ algebras, and $A_{\infty}$ algebras, establishing several technical lemmas required in the main text. We refer the reader to~\cite{MR1786197, MR2954392, MR4485797} for a comprehensive introduction to coalgebras.

\subsection{Graded coalgebras}

A \textbf{graded coalgebra} $(C,\Delta,\epsilon)$ is a graded vector space $C$ equipped with degree-preserving linear maps $\Delta: C\rightarrow C\otimes C$, called the comultiplication, and $\epsilon:C\rightarrow \KK$, called the counit, satisfying
\begin{enumerate}[label=\rm{(\roman*)}]
\item $(\Delta\otimes \id_{C})\circ \Delta = (\id_{C}\otimes \Delta)\circ \Delta$,
\item $\mu_{\KK,C} \circ (\epsilon\otimes \id_{C})\circ \Delta = \id_{C} = \mu_{C,\KK} \circ (\id_{C}\otimes \epsilon)\circ \Delta,$
\end{enumerate}
where the ground field $\KK$ is considered as a graded vector space concentrated in degree zero, and $\mu_{\KK,C}, \mu_{C,\KK}$ are the scalar multiplications. 
A graded coalgebra $(C,\Delta,\epsilon)$ is said to be \textbf{graded cocommutative} if $\tw \circ \Delta = \Delta$, where $\tw: C \otimes C \to C\otimes C$ is defined by $\tw(x \otimes y) = (-1)^{|x||y|} y \otimes x$.

For simplicity of notation, the graded coalgebra $(C,\Delta,\epsilon)$ is often denoted by $(C,\Delta)$.

Let $(B,\Delta_{B},\epsilon_{B})$ and $(C, \Delta_{C}, \epsilon_{C})$ be graded coalgebras. A morphism of graded coalgebras is a degree-preserving linear map $\Phi:B\to C$ satisfying
\begin{enumerate}[label=\rm{(\roman*)}]
\item $\Delta_{C} \circ \Phi = (\Phi\tensor \Phi)\circ \Delta_{B} $
\item $\epsilon_{C} \circ \Phi = \epsilon_{B}$ .
\end{enumerate}

Given a graded coalgebra $(C,\Delta)$, a morphism of graded vector spaces $p:C\to V$ is called a \textbf{cogenerator} if
\[(p, p^{\tensor 2}\Delta, p^{3}\Delta^{2},\cdots) : C \to \prod_{n=1}^{\infty}V^{\tensor n}\]
is injective. 

\begin{lemma}\cite[Proposition~11.1.13]{MR4485797} \label{lem:MorCogen}
A morphism of graded coalgebras $\Phi:(B,\Delta_{B})\to (C,\Delta_{C})$ is uniquely determined by its composition $p \Phi : B\to V$ with a cogenerator $p: C \to V$.
\end{lemma}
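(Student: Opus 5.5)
The statement to prove is Lemma~\ref{lem:MorCogen}: a morphism of graded coalgebras $\Phi:B\to C$ is uniquely determined by the composite $p\Phi:B\to V$, where $p:C\to V$ is a cogenerator. The plan is to reduce uniqueness to the injectivity built into the definition of a cogenerator. Suppose $\Phi,\Psi:B\to C$ are two morphisms of graded coalgebras with $p\Phi=p\Psi$. I would show that
\[(p,\;p^{\tensor 2}\Delta_{C},\;p^{\tensor 3}\Delta_{C}^{2},\ldots)\circ\Phi=(p,\;p^{\tensor 2}\Delta_{C},\;p^{\tensor 3}\Delta_{C}^{2},\ldots)\circ\Psi .\]
Injectivity of this map into $\prod_{n}V^{\tensor n}$ then forces $\Phi=\Psi$. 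Here $\Delta_{C}^{n}$ denotes the iterated comultiplication $C\to C^{\tensor (n+1)}$, which is well defined by coassociativity.

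The key step is the compatibility of iterated comultiplications with coalgebra morphisms:
\[\Delta_{C}^{n}\circ\Phi=\Phi^{\tensor(n+1)}\circ\Delta_{B}^{n}\qquad\text{for all } n\geq 0 .\]
I would prove this by induction on $n$. The case $n=0$ is trivial and $n=1$ is the morphism axiom. For the inductive step, write $\Delta_{C}^{n+1}=(\Delta_{C}\tensor\id^{\tensor n})\Delta_{C}^{n}$. Applying the inductive hypothesis and then the axiom $\Delta_{C}\Phi=(\Phi\tensor\Phi)\Delta_{B}$ on the first tensor factor gives
\[\Delta_{C}^{n+1}\Phi=\Phi^{\tensor(n+2)}\Delta_{B}^{n+1}.\]
Since $\Phi$ has degree $0$, no Koszul signs appear when tensor powers of $\Phi$ are moved past elements.

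With this in hand, the $n$-th component becomes
\[p^{\tensor(n+1)}\Delta_{C}^{n}\Phi=(p\Phi)^{\tensor(n+1)}\Delta_{B}^{n},\]
which depends only on $p\Phi$. Hence the images of $\Phi$ and $\Psi$ agree in every component, so they agree under the injective map, and $\Phi=\Psi$.

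There is no real obstacle. The only points requiring care are the notational convention that $p^{k}\Delta^{k-1}$ means $p^{\tensor k}\circ\Delta^{k-1}$, and checking that $(p\Phi)^{\tensor k}=p^{\tensor k}\Phi^{\tensor k}$ introduces no signs because $\Phi$ is degree-preserving. The counit condition is not needed for uniqueness. Existence of a morphism with prescribed $p\Phi$ is not asserted in this generality; for $SV$ and $TV$ it is the standard cofreeness statement that is used elsewhere in the paper.
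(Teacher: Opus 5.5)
Your argument is correct. The paper gives no proof of its own here; it simply cites \cite[Proposition~11.1.13]{MR4485797}. Your route, pushing $\Phi$ through the iterated coproducts to obtain $p^{\tensor(n+1)}\Delta_C^{n}\Phi=(p\Phi)^{\tensor(n+1)}\Delta_B^{n}$ and then invoking injectivity, is the standard proof of the lemma as stated with the paper's definition of cogenerator.

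One caveat concerns your closing remark that the counit axiom is not needed. The lemma is applied in the paper to the projections $p\colon SV\to V$ and $p\colon TV\to V$, which send $1$ to $0$, so there $(p,p^{\tensor 2}\Delta,\ldots)$ has kernel $\KK\cdot 1$. In that setting the counit axiom $\epsilon_C\Phi=\epsilon_B$ is genuinely needed to fix the $\KK$-component: both $\Phi=0$ and $x\mapsto\epsilon_B(x)\,1$ satisfy $\Delta\Phi=(\Phi\tensor\Phi)\Delta$ and $p\Phi=0$.
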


A \textbf{coderivation} of degree $i$ on a graded coalgebra $(C,\Delta)$  is a linear map $Q: C \to C$ of degree $i$ such that 
\[
\Delta\circ Q = (\id_{C}\otimes \, Q+ Q\otimes \id_{C})\circ \Delta \, .
\]
The space of coderivations of degree $i$ on $C$ is denoted by $\coDer^i(C)$, and $\coDer(C):= \bigoplus_i \coDer^i(C)$. 
\begin{lemma} \cite[Lemma 11.2.8]{MR4485797} \label{lem:CoderCogen}
Let $(C,\Delta)$ be a graded coalgebra and let $p:C\to V$ be a cogenerator. A coderivation $Q: C\to C$ is uniquely determined by its composition $pQ:C\to V$ with the cogenerator $p$.
\end{lemma}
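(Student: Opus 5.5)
The plan is to reduce the statement to a vanishing statement and then use injectivity of the map in the definition of a cogenerator. Let $Q,Q'\in\coDer^{i}(C)$ be coderivations of the same degree $i$ with $pQ=pQ'$. For coderivations in $\coDer(C)=\bigoplus_i\coDer^i(C)$, compare homogeneous components degree by degree; this is legitimate because $p$ is degree-preserving. Set $D=Q-Q'$. The coderivation identity is linear in $Q$, so $D$ is again a coderivation of degree $i$, and $pD=0$. It therefore suffices to show that a coderivation $D$ with $pD=0$ vanishes identically.

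The key step is an iterated Leibniz rule. Write $\Delta^{(n-1)}:C\to C^{\tensor n}$ for the iterated comultiplication, which is well defined by coassociativity. I will prove, by induction on $n\geq 1$, that
\[
\Delta^{(n-1)}\circ D=\sum_{j=1}^{n}\big(\id_C^{\tensor (j-1)}\tensor D\tensor \id_C^{\tensor (n-j)}\big)\circ\Delta^{(n-1)} .
\]
The Koszul signs are built into the convention $(f\tensor g)(x\tensor y)=(-1)^{\degree{g}\degree{x}}f(x)\tensor g(y)$. The case $n=1$ is trivial and $n=2$ is the definition of a coderivation. For the inductive step, write $\Delta^{(n)}=(\Delta^{(n-1)}\tensor\id_C)\circ\Delta$. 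Apply the coderivation identity once to move $D$ past $\Delta$, producing two terms. In the term where $D$ lands in the first tensor factor, apply the induction hypothesis to move $D$ past $\Delta^{(n-1)}$. The term where $D$ lands in the last factor already has the required form. The only care needed is the sign bookkeeping, and this is automatic with the stated convention since all maps involved except $D$ have degree $0$.

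With this formula the conclusion is immediate. Compose with $p^{\tensor n}$. Each summand contains the factor $p\circ D=0$ in the $j$-th slot, so
\[
p^{\tensor n}\Delta^{(n-1)}D=\sum_{j}\pm\,\big(p^{\tensor(j-1)}\tensor pD\tensor p^{\tensor(n-j)}\big)\Delta^{(n-1)}=0
\]
for every $n\geq1$. Hence the map $(p,p^{\tensor2}\Delta,p^{\tensor3}\Delta^{(2)},\dots)$ sends $D(c)$ to $0$ for every $c\in C$. Since $p$ is a cogenerator, this map is injective, so $D(c)=0$ for all $c$, that is, $Q=Q'$.

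I expect no real obstacle. The only point requiring attention is the inductive proof of the iterated Leibniz rule, specifically checking that the Koszul signs and coassociativity combine correctly. The rest is formal and parallels the argument for Lemma~\ref{lem:MorCogen}, where $\Delta^{(n-1)}\Phi=\Phi^{\tensor n}\Delta^{(n-1)}$ plays the role of the iterated Leibniz rule.
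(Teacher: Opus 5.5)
Your proof is correct. The iterated Leibniz rule $\Delta^{(n-1)}D=\sum_{j}(\id^{\tensor(j-1)}\tensor D\tensor\id^{\tensor(n-j)})\Delta^{(n-1)}$, composed with $p^{\tensor n}$ (no signs arise since $p$ has degree $0$), shows $D(c)$ lies in the kernel of the injective cogenerator map, and your reduction of the non-homogeneous case to homogeneous components via $\deg p=0$ is sound. The paper gives no proof of its own and only cites the reference; your argument is the standard proof of this fact.
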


We introduce the notion of convolution product to simplify the notation below.

Let $(C, \Delta, \epsilon)$ be a graded coalgebra, and suppose we are given a unital associative algebra $(A,\mu, 1_{A})$.
The \textbf{convolution product} $\conv: \Hom(C,A) \times \Hom(C,A) \to \Hom(C,A)$ is defined by
\begin{equation}\label{eq:ConvProd}
f\conv g := \mu (f\otimes g)\Delta \, ,
\end{equation}
where $f,g \in \Hom(C,A)$. One can check that  $(\Hom(C,A),\conv )$ forms a graded algebra with the unit $1_{\Hom(C,A)} \in \Hom(C,A)$, defined by
\[
1_{\Hom(C,A)}(x):= \epsilon(x) \cdot 1_A
\] 
for $x\in C$. 

We note that, by the definition of coderivations, we have
\[(f\conv g)Q = (-1)^{\degree{g} \degree{Q}} fQ\conv g + f\conv gQ\]
for homogeneous elements $f,g\in \Hom(C,A)$ and a homogeneous coderivation $Q\in \coDer(C)$.

\begin{remark}
Given a coalgebra $C$, the $\KK$-linear dual $C^{\vee}=\Hom(C,\KK)$ is naturally an associative algebra. However, unless an algebra $A$ is finite dimensional, the $\KK$-linear dual of $A$ is \textit{not} a coalgebra. Similarly, if $Q$ is a coderivation on $C$, then the induced map $Q^{\transpose}:C^{\vee}\to C^{\vee}$ defined by $\xi \mapsto -(-1)^{\degree{\xi}}\xi Q$ for $\xi\in C^{\vee}$ is a derivation, but the converse is not true in general. 
\end{remark}

\subsection{Tensor coalgebras}

Let $V$ be a graded vector space. The \textbf{tensor coalgebra} of $V$ is the graded vector space \[TV := \bigoplus_{n=0}^{\infty} V^{\tensor n}\] where $V^{\otimes 0}:=\KK$. It is equipped with the counit defined by the projection $\epsilon:TV \onto \KK$, and the comultiplication
$\Delta$ defined by $\Delta(1)=1\tensor 1$, $\Delta(v)= 1\tensor v + v \tensor 1$, and
\[
\Delta(v_{1}\tensor \cdots \tensor v_{n}) 
= 1\tensor (v_{1}\tensor \cdots \tensor v_{n}) + (v_{1}\tensor \cdots \tensor v_{n})\tensor 1 
+ \sum_{i=1}^{n-1} (v_{1}\tensor \cdots \tensor v_{i})\tensor (v_{i+1}\tensor \cdots \tensor v_{n})
\]
for any $v, v_{1}, \ldots, v_{n} \in V$.

This tensor coalgebra $(TV,\Delta)$ is the  cofree conilpotent coalgebra cogenerated by $V$ with  the natural projection $p: TV\to V$ serving as its cogenerator. 

Furthermore, the space $TV$ carries a natural associative algebra structure $\mu:TV\times TV \to TV$ defined by the concatenation:
\[\mu(\mathbf{v}, \mathbf{w})=\mathbf{v}\tensor \mathbf{w}\]
for $\mathbf{v}, \mathbf{w}\in TV$. Consequently, the space of linear endomorphisms $\Hom(TV,TV)$ admits a convolution product $\conv$.

\subsubsection{Endomorphisms} \label{sec:EndoT}
According to Lemma~\ref{lem:MorCogen},
an endomorphism $\Phi:(TV,\Delta) \to (TV,\Delta)$ of the tensor coalgebra $(TV,\Delta)$ is uniquely determined by the maps
\[\phi_{n}:=p\Phi|_{T^{n}(V)} :T^{n}(V)\to V\]
for $n\geq 1$, where $T^n V=V^{\otimes n}$. Indeed, given $\phi_{n}:T^{n}(V) \to V$ for $n\geq 1$, the endomorphism $\Phi:TV\to TV$ is determined by $\Phi(1)=1$ and the components
\[\phi_{n}^{m}:T^{n}(V)\into TV \xto{\Phi} TV \onto T^{m}(V)\]
defined by
\[\phi_{n}^{m}=\sum_{r_{1}+\cdots + r_{m}=n} (\phi_{r_{1}}\conv \cdots \conv \phi_{r_{m}}) =\sum_{r=s}^{n-m+s} (\phi_{r}^{s}\conv \phi_{n-r}^{m-s})\, \]
for each $1\leq s\leq m$.
More explicitly, for $v_{1},\ldots, v_{n}\in V$,
\begin{eqnarray*}
\phi_{n}^{m}(v_{1}\tensor \cdots \tensor v_{n}) &=& \sum_{r_{1}+\cdots + r_{m}=n} \phi_{r_{1}}(v_{1}\tensor \cdots \tensor v_{p_{1}}) \tensor \cdots \tensor \phi_{r_{m}}(v_{n-r_{m}+1}\tensor \cdots \tensor v_{n}) \\
&=& \sum_{r=s}^{n-m+s}\phi_{r}^{s}(v_{1}\tensor \cdots \tensor v_{r})\tensor \phi_{n-r}^{m-s}(v_{r+1}\tensor \cdots \tensor v_{n}) \, .
\end{eqnarray*}

\subsubsection{Coderivations}
According to Lemma~\ref{lem:CoderCogen}, 
a coderivation $Q\in \coDer(TV,TV)$ of degree $k$ on $TV$ is uniquely determined by
\[q_{n}=pQ|_{T^{n}(V)}: T^{n}V\to V\]
for $n\geq 0$. Indeed, given $q_{n}:T^{n}(V)\to V$  with  $|q_n|=k$ for $n\geq 0$, let $q=\sum_{n=0}^{\infty} q_{n} : TV\to V $. 
Then we obtain a coderivation $Q$ of degree $k$  as follows: for $v_{1},\ldots, v_{n}\in V$, 
\begin{align*} Q(v_{1}\tensor \cdots \tensor v_{n}) =&\sum_{i=0}^{n} (-1)^{k(\sum_{r=1}^i|v_r|)} \cdot  v_1\otimes \cdots \otimes v_i \otimes q_0(1)\otimes \cdots \otimes v_n \\
	&+ \sum_{j=1}^{n}  \sum_{i=0}^{n-j}(-1)^{k(\sum_{r=1}^i|v_r|)} \cdot  v_1\cdots \otimes v_{i}\otimes q_{j}(v_{i+1}\tensor \cdots \tensor v_{i+j})\tensor v_{i+j+1}\tensor \cdots \tensor v_{n}. 
	\end{align*}

\subsection{Symmetric coalgebras}\label{sec:coAlgSV}

Let $V$ be a graded vector space. The \textbf{symmetric coalgebra} of $V$ is the graded vector space $SV := \bigoplus_{n=0}^\infty V^{\odot n}$ equipped with the counit defined by the projection $\epsilon : SV \onto S^{0}(V) \cong \KK$, and 
the comultiplication
$\Delta$ defined by $\Delta(1) = 1 \otimes 1$, $\Delta(v) = 1 \otimes v + v \otimes 1$, and
\begin{multline*}
\Delta(v_{1} \odot \cdots \odot v_{n}) =1\tensor (v_{1} \odot \cdots \odot v_{n}) + (v_{1} \odot \cdots \odot v_{n})\tensor 1 \\
+ \sum_{k=1}^{n-1} \sum_{\sigma \in \shuffle(k,n-k)} \sign \cdot (v_{\sigma(1)} \odot \cdots \odot v_{\sigma(k)}) \otimes (v_{\sigma(k+1)} \odot \cdots \odot v_{\sigma(n)}) \, ,
\end{multline*}
for any $v, v_{1}, \ldots, v_{n} \in V$. Here, $\sign = \sign(\sigma; v_{1},\ldots, v_{n})=\pm 1$ is determined by the Koszul sign convention. 

The symmetric coalgebra $(SV,\Delta)$ is the cofree conilpotent cocommutative coalgebra cogenerated by $V$. 
Indeed, the natural projection $p:SV\to V$ serves as its cogenerator.

Analogous to the case of $TV$, the space $SV$ carries a natural associative algebra structure $\mu:SV\times SV\to SV$ defined by the symmetric tensor product $\odot$:
\[\mu(\mathbf{v}, \mathbf{w})= \mathbf{v}\odot \mathbf{w}\]
and thus, $\Hom(SV,SV)$ admits a convolution product $\conv$. Note that by the graded cocommutativity of $\Delta$ and the graded commutativity of $\mu$, the convolution product $\conv$ is graded commutative.

\subsubsection{Endomorphisms}
According to Lemma~\ref{lem:MorCogen},
an endomorphism $\Phi:(SV,\Delta) \to (SV,\Delta)$ of the symmetric coalgebra $(SV,\Delta)$ is uniquely determined by 
\[\phi_{n}:=p\Phi|_{S^{n}(V)} :S^{n}(V)\to V\]
for $n\geq 1$. Explicitly, given $\phi_{n}:S^{n}(V) \to V$ for $n\geq 1$, the endomorphism $\Phi:SV\to SV$ is determined by $\Phi(1)=1$ and the components
\[\phi_{n}^{m}:S^{n}(V)\into SV \xto{\Phi} SV \onto S^{m}(V)\]
defined by
\[\phi_{n}^{m}=\frac{1}{m!}\sum_{r_{1}+\cdots + r_{m}=n} (\phi_{r_{1}}\conv \cdots \conv \phi_{r_{m}}) = \frac{1}{m} \sum_{r=1}^{n-m+1} (\phi_{r}\conv \phi_{n-r}^{m-1}) \, .\]

We further investigate the relations between the components $\phi_{n}^{m}$ of an endomorphism $\phi$ of $(SV,\Delta)$.

Consider a set
\[R_{n}^{m}:=\{ (r_{1},\ldots, r_{m}) \in \ZZ_{>0}^{m} : r_{1}+\cdots +r_{m}=n, \quad r_{1}\leq \cdots \leq r_{m}\}. \]
Associated to each $\mathbf{r}=(r_{1},\ldots, r_{m})\in R_{n}^{m}$, there exist a positive integer $l\in \ZZ_{>0}$ and positive integers $i_{1},\ldots,i_{l} \in \ZZ_{>0}$ such that $i_{1}+\cdots + i_{l}=m$ and
\[r_{1}=\cdots = r_{i_{1}} < r_{i_{1}+1} = \cdots = r_{i_{1}+i_{2}} < \cdots < r_{m-i_{l}+1} = \cdots = r_{m} .\]
Denoting by $I_{\mathbf{r}}=(i_{1},\ldots,i_{l})$, we define 
\[\phi_{\mathbf{r}}=\phi_{r_{1}}\conv \cdots \conv \phi_{r_{m}} \, , \qquad c_{I_{\mathbf{r}}} = \frac{1}{i_{1}! \cdot i_{2}! \cdots i_{l}!} \, .\]

\begin{lemma}\label{lem:PNMSym}
With the notation above, we have
\[ \phi_{n}^{m}= \sum_{\mathbf{r}\in R_{n}^{m}} c_{I_{\mathbf{r}}} \cdot \phi_{\mathbf{r}} \, .\]
\end{lemma}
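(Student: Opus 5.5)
The plan is to start from the first formula for $\phi_n^m$ recalled just above,
\[
\phi_{n}^{m}=\frac{1}{m!}\sum_{r_{1}+\cdots + r_{m}=n} \phi_{r_{1}}\conv \cdots \conv \phi_{r_{m}},
\]
and to group its terms by the multiset $\{r_1,\ldots,r_m\}$. The sum runs over ordered compositions of $n$ into $m$ positive parts. Each such composition is a rearrangement of exactly one nondecreasing tuple $\mathbf{r}\in R_n^m$.

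The key point is that the convolution product $\conv$ on $\Hom(SV,SV)$ is graded commutative, as noted above. This follows from the graded cocommutativity of $\Delta$ together with the graded commutativity of $\odot$. Each $\phi_r:S^r(V)\to V\subset SV$ is degree-preserving, that is, of degree $0$, so no Koszul signs appear. Hence for any permutation $\sigma\in\mathbb{S}_m$,
\[
\phi_{r_{\sigma(1)}}\conv\cdots\conv\phi_{r_{\sigma(m)}}=\phi_{r_1}\conv\cdots\conv\phi_{r_m}=\phi_{\mathbf{r}}.
\]
Associativity of $\conv$ is needed to make the iterated product well defined; it follows from coassociativity of $\Delta$ and associativity of $\odot$. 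Consequently, every ordered composition lying over the same $\mathbf{r}$ contributes the same term $\phi_{\mathbf{r}}$.

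It then remains to count the ordered compositions lying over a given $\mathbf{r}\in R_n^m$ with block type $I_{\mathbf{r}}=(i_1,\ldots,i_l)$. They are the distinct rearrangements of a multiset with multiplicities $i_1,\ldots,i_l$, so there are $m!/(i_1!\cdots i_l!)$ of them. Multiplying by the prefactor $1/m!$ gives the coefficient $1/(i_1!\cdots i_l!)=c_{I_{\mathbf{r}}}$, which yields the claimed identity.

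The only step needing care is the justification of the first formula for $\phi_n^m$, if it is not taken as given. I would check it by evaluating on $v_1\odot\cdots\odot v_n$ using the iterated coproduct $\Delta^{m-1}$. The $m$-fold convolution $\phi_{r_1}\conv\cdots\conv\phi_{r_m}$ sums over ordered $(r_1,\ldots,r_m)$-shuffles of the inputs. The coalgebra morphism $\Phi$ composed with the projection to $S^m$ instead sums over unordered partitions of $\{1,\ldots,n\}$ into $m$ nonempty blocks. Each unordered partition appears $m!$ times among the ordered ones, which accounts for the factor $1/m!$. Since the paper states this formula already, I would simply invoke it, and the main work is the counting argument above.
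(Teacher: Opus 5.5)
Your proposal is correct and is the same argument as the paper's. The paper also starts from the $\frac{1}{m!}$ formula for $\phi_n^m$ and uses graded commutativity of $\conv$ to collect terms over each $\mathbf{r}\in R_n^m$; it leaves the multinomial count $m!/(i_1!\cdots i_l!)$ as ``straightforward,'' and you make that count explicit.
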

\begin{proof}
Since the convolution product $\conv$ on $\Hom(SV,SV)$ is graded commutative, it is straightforward to check that
\[\phi_{n}^{m}=\frac{1}{m!}\sum_{r_{1}+\cdots + r_{m}=n} (\phi_{r_{1}}\conv \cdots \conv \phi_{r_{m}}) = \sum_{\mathbf{r}\in R_{n}^{m}}c_{I_{\mathbf{r}}} \cdot \phi_{\mathbf{r}} \, .\]
This completes the proof.
\end{proof}

\begin{lemma}\label{lem:PNM}
The following equation holds:
\[ \phi_{n}^{m}=\frac{1}{n}\sum_{r=1}^{n-m+1} r \cdot (\phi_{r}\conv \phi_{n-r}^{m-1}) \, . \]
\end{lemma}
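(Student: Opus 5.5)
The plan is to derive the identity from the closed formula for the components of $\Phi$ recalled just before Lemma~\ref{lem:PNMSym},
\[
\phi_{n}^{m}=\frac{1}{m!}\sum_{r_{1}+\cdots+r_{m}=n}\phi_{r_{1}}\conv\cdots\conv\phi_{r_{m}} ,
\]
combined with the graded commutativity of $\conv$ on $\Hom(SV,SV)$. The idea is a symmetrisation (``weighting by the first index'') argument: insert the weight $r_{1}$ into the sum, and use commutativity to show that the weight could equally have been placed on any slot.

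\textbf{Step 1.} Since each $\phi_{r}$ is degree-preserving, the graded commutativity of $\conv$ gives no signs. So for any permutation of the factors, $\phi_{r_{1}}\conv\cdots\conv\phi_{r_{m}}$ is unchanged. Relabelling the summation variables by the transposition $1\leftrightarrow j$ therefore shows, for each $j$,
\[
\sum_{r_{1}+\cdots+r_{m}=n} r_{1}\,\phi_{r_{1}}\conv\cdots\conv\phi_{r_{m}}
=\sum_{r_{1}+\cdots+r_{m}=n} r_{j}\,\phi_{r_{1}}\conv\cdots\conv\phi_{r_{m}} .
\]

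\textbf{Step 2.} Average the identity of Step~1 over $j=1,\dots,m$. Since $r_{1}+\cdots+r_{m}=n$, the weighted sum equals
\[
\frac{n}{m}\sum_{r_{1}+\cdots+r_{m}=n}\phi_{r_{1}}\conv\cdots\conv\phi_{r_{m}}=\frac{n}{m}\,m!\,\phi_{n}^{m}.
\]

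\textbf{Step 3.} Evaluate the same weighted sum a second way, by fixing $r=r_{1}$ and summing over the remaining indices. Associativity of $\conv$ gives
\[
\sum_{r} r\,\phi_{r}\conv\Big(\sum_{r_{2}+\cdots+r_{m}=n-r}\phi_{r_{2}}\conv\cdots\conv\phi_{r_{m}}\Big)
=(m-1)!\sum_{r=1}^{n-m+1} r\,\phi_{r}\conv\phi_{n-r}^{m-1},
\]
using the same closed formula with $m-1$ factors. The range of $r$ is exactly $1\le r\le n-m+1$, because every $r_{i}\ge 1$. Equating this with Step~2 and dividing by $(m-1)!\,n$ yields the claim.

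The only delicate points are bookkeeping rather than a real obstacle. First, one must confirm that no Koszul signs arise in Step~1; this holds because the $\phi_{r}$ have degree $0$. Second, one must treat the edge case $m=1$. There the formula should be read with $\phi_{n-r}^{0}$ equal to the unit $1_{\Hom(SV,SV)}$ when $r=n$ and zero otherwise, consistent with $\Phi(1)=1$, and the identity reduces to $\phi_{n}=\frac{1}{n}\,n\,\phi_{n}$.
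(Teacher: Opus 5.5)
Your proof is correct, but it is organised differently from the paper's.

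The paper first invokes Lemma~\ref{lem:PNMSym}. This rewrites $\phi_n^m$ as a sum over non-decreasing tuples $\mathbf r\in R_n^m$, with multiplicity coefficients $c_{I_{\mathbf r}}=1/(i_1!\cdots i_l!)$. It then checks, one multiset at a time, the coefficient identity $n\,c_{I_{\mathbf r}}=\sum_j \widetilde r_j\, c_{I_{\mathbf r^{\{j\}}}}$. This identity follows from $n=\sum_j i_j\widetilde r_j$ and $c_{I_{\mathbf r^{\{j\}}}}=i_j\,c_{I_{\mathbf r}}$. Finally it matches pairs $(r,\widetilde{\mathbf r})$ with pairs $(\mathbf r,j)$.

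You instead stay with ordered tuples and the $\frac{1}{m!}$ formula. You replace the multiset bookkeeping with a double-counting argument: relabelling plus commutativity of $\conv$ moves the weight $r_1$ to any slot, and averaging over slots produces the factor $n/m$ directly.

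The underlying combinatorics is the same in both arguments. Your route buys three things:
\begin{itemize}
\item it makes Lemma~\ref{lem:PNMSym} unnecessary;
\item it isolates exactly what is used, namely sign-free commutativity of $\conv$ on the degree-$0$ maps $\phi_r$, and $r_1+\cdots+r_m=n$;
\item it handles the degenerate case $m=1$ explicitly, which the paper leaves implicit.
\end{itemize}
In exchange, the paper's version gives an explicit coefficient identity for each distinct monomial $\phi_{\mathbf r}$.
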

\begin{proof}
We may denote an element $\mathbf{r}\in R_{n}^{m}$ with $I_{\mathbf{r}}=(i_{1},\ldots,i_{l})$ by
\[\mathbf{r}=(\overbrace{\widetilde{r}_{1},\ldots, \widetilde{r}_{1}}^{\text{$i_{1}$ times}}, \overbrace{\widetilde{r}_{2},\ldots,\widetilde{r}_{2}}^{\text{$i_{2}$ times}}, \cdots , \overbrace{\widetilde{r}_{l},\cdots,\widetilde{r}_{l}}^{\text{$i_{l}$ times}}) .\]
where $\widetilde{r}_{1}<\cdots < \widetilde{r}_{l}$. Then $n=i_{1}\widetilde{r}_{1}+ \cdots + i_{l}\widetilde{r}_{l}$.

For each $j=1,\ldots, l$, denote by $\mathbf{r}^{\{j\}}\in R_{n-\widetilde{r}_{j}}^{m-1}$ an element obtained by omitting one $\widetilde{r}_{j}$ from $\mathbf{r}$. That is,
\[\mathbf{r}^{\{j\}}=(\overbrace{\widetilde{r}_{1},\ldots, \widetilde{r}_{1}}^{\text{$i_{1}$ times}}, \ldots, \overbrace{\widetilde{r}_{j},\ldots,\widetilde{r}_{j}}^{\text{$(i_{j}-1)$ times}}, \cdots , \overbrace{\widetilde{r}_{l},\cdots,\widetilde{r}_{l}}^{\text{$i_{l}$ times}}) .\]
For this fixed $\mathbf{r}\in R_{n}^{m}$, we have
\[n\cdot c_{I_{\mathbf{r}}}= \frac{\widetilde{r}_{1}}{(i_{1}-1)! \cdot i_{2}! \cdots i_{l}!} + \frac{\widetilde{r}_{2}}{i_{1}! \cdot (i_{2}-1)! \cdots i_{l}!}+ \cdots + \frac{\widetilde{r}_{l}}{i_{1}! \cdot i_{2}! \cdots (i_{l}-1)!} = \sum_{j=1}^{l} \widetilde{r}_{j}\cdot c_{I_{\mathbf{r}^{\{j\}}}}.\]

Under this notation, we have
\[\phi_{\mathbf{r}}=\phi_{\widetilde{r}_{1}}\conv \phi_{\mathbf{r}^{\{1\}}}=\phi_{\widetilde{r}_{2}}\conv \phi_{\mathbf{r}^{\{2\}}}=\cdots = \phi_{\widetilde{r}_{l}}\conv \phi_{\mathbf{r}^{\{l\}}} \]
and thus, we have
\[ n\cdot c_{I_{\mathbf{r}}} \cdot \phi_{\mathbf{r}} =  \sum_{j=1}^{l} \widetilde{r}_{j}\cdot c_{I_{\mathbf{r}^{\{j\}}}} \cdot (\phi_{\widetilde{r}_{j}}\conv \phi_{\mathbf{r}^{\{j\}}}) \, . \]

Together with Lemma~\ref{lem:PNMSym}, we have
\[\sum_{r=1}^{n-m+1} r \cdot (\phi_{r}\conv \phi_{n-r}^{m-1} )
=\sum_{r=1}^{n-m+1} \sum_{\widetilde{\mathbf{r}}\in R_{n-r}^{m-1}} r \cdot c_{I_{\widetilde{\mathbf{r}}}} \cdot (\phi_{r}\conv \phi_{\widetilde{\mathbf{r}}}) 
= \sum_{\mathbf{r}\in R_{n}^{m}} n \cdot c_{I_{\mathbf{r}}} \cdot \phi_{\mathbf{r}}
= n \cdot \phi_{n}^{m} .\]
This completes the proof.
\end{proof}

\subsubsection{Coderivations}
According to Lemma~\ref{lem:CoderCogen}, 
a coderivation $Q\in \coDer(SV,SV)$ on $SV$ is uniquely determined by
\[q_{n}=pQ|_{S^{n}(V)}: S^{n}(V)\to V\]
for $n\geq 0$. Indeed, given $q_{n}:S^{n}(V)\to V$ for $n\geq 0$, let $q=\sum_{n=0}^{\infty} q_{n} : SV\to V (\subset SV)$. 
Then we obtain a coderivation $Q=q \conv \id_{SV}:SV\to SV$. 
More explicitly, for $v_{1},\ldots, v_{n}\in V$, 
\[ Q(v_{1}\odot \cdots \odot v_{n}) = q_{0}(1)\odot v_{1}\odot \cdots \odot v_{n} + \sum_{k=1}^{n} \sum_{\sigma \in \shuffle(k,n-k)} \sign \cdot q_{k}(v_{\sigma(1)}\odot \cdots \odot v_{\sigma(k)})\odot v_{\sigma(k+1)}\odot \cdots \odot v_{\sigma(n)} \, ,\]
where $\shuffle(k,n-k)$ denotes the set of $(k,n-k)$-shuffles and
$\sign = \sign (\sigma; v_{1},\cdots,v_{n})=\pm 1$ is determined by the Koszul sign convention.

For a fixed $k$, let $q_{k}:S^{k}(V)\to V$. We denote by $\widetilde{q}_{k}$ the coderivation on $SV$ determined by $q_{k}:S^{k}(V)\to V$.
In the following lemmas, we consider its relation with the component $\phi_{n}^{m}:S^{n}(V)\to S^{m}(V)$ of an endomorphism $\Phi:SV\to SV$ of $(SV,\Delta)$. The component $\phi_{n}^{m}$ can be viewed as an element of $\Hom(SV,SV)$ by the zero extension.

\begin{lemma}\label{lem:PNKQ}
The following equality holds:
\[ \phi_{n}^{m} \widetilde{q}_{k} =\sum_{r=1}^{n-m+1} (\phi_{r} \widetilde{q}_{k}\conv \phi_{n-r}^{m-1}) : S^{n+k-1}(V)\to S^{m}(V)\]
for $m\geq 2$ and $n\geq m$.
\end{lemma}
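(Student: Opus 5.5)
We will prove the identity by a direct evaluation on a homogeneous element $\bfv = v_{1}\odot\cdots\odot v_{n+k-1}\in S^{n+k-1}(V)$. We combine the convolution description of the components of $\Phi$ with the fact that $\widetilde{q}_{k}=q_{k}\conv \id_{SV}$ is a coderivation.

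\textbf{Step 1: factor $\Phi$ through $\widetilde{q}_{k}$.} First we note that $\Phi\widetilde{q}_{k}$ is a $(\Phi,\Phi)$-coderivation:
\[
\Delta\Phi\widetilde{q}_{k}=(\Phi\tensor \Phi\widetilde{q}_{k}+\Phi\widetilde{q}_{k}\tensor \Phi)\Delta .
\]
This holds because $\Phi$ is a coalgebra morphism and $\widetilde{q}_{k}$ is a coderivation. Since $\widetilde{q}_{k}$ lowers the tensor degree by $k-1$, the map $\phi_{n}^{m}\widetilde{q}_{k}$ is nonzero only on $S^{n+k-1}(V)$, which justifies the stated domain.

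\textbf{Step 2: expand via the cogenerator.} Write $\pi_{m}:SV\to S^{m}(V)$ for the projection. By the description of $\phi^{m}_{n}$ in the appendix,
\[
\pi_{m}\Phi=\tfrac{1}{m!}\,(p\Phi)^{\conv m}.
\]
Applying this to $\widetilde{q}_{k}$ and using the derivation rule
\[
(f\conv g)Q=(-1)^{\degree{g}\degree{Q}}fQ\conv g+f\conv gQ
\]
stated before the Remark in the appendix, we get
\[
\pi_{m}\Phi\widetilde{q}_{k}=\tfrac{1}{m!}\sum_{j=1}^{m}(p\Phi)^{\conv (j-1)}\conv (p\Phi\widetilde{q}_{k})\conv (p\Phi)^{\conv(m-j)},
\]
with Koszul signs, which are trivial because $\Phi$ has degree $0$. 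Graded commutativity of $\conv$ on $\Hom(SV,SV)$ lets us move the factor $p\Phi\widetilde{q}_{k}$ to the front in each summand. This yields
\[
\tfrac{m}{m!}\,(p\Phi\widetilde{q}_{k})\conv(p\Phi)^{\conv(m-1)}=(p\Phi\widetilde{q}_{k})\conv \pi_{m-1}\Phi .
\]
The sign incurred by commuting $p\Phi\widetilde{q}_{k}$, of degree $\degree{Q}$, past copies of $p\Phi$, of degree $0$, is trivial.

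\textbf{Step 3: restrict to $S^{n+k-1}(V)$ and match components.} We split $\Delta\bfv$ by tensor lengths $(a,b)$. The left factor $p\Phi\widetilde{q}_{k}$ restricted to $S^{a}(V)$ equals $\phi_{a-k+1}\widetilde{q}_{k}$ (with $\widetilde{q}_k$ landing in $S^{a-k+1}$). The right factor on $S^{b}(V)$ is $\phi_{b}^{m-1}$. Setting $r=a-k+1$ gives $b=n-r$. The range $1\le r\le n-m+1$ is forced by $\phi^{m-1}_{n-r}=0$ unless $n-r\ge m-1$, and by $r\ge 1$; the term $a<k$ vanishes since $q_k$ kills lower degrees. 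This produces exactly $\sum_{r}\phi_{r}\widetilde{q}_{k}\conv\phi^{m-1}_{n-r}$.

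\textbf{Main obstacle.} The delicate point is Step 2: checking that graded commutativity of $\conv$ applies even though $p\Phi\widetilde{q}_{k}$ has nonzero degree, and that the normalisation $\tfrac{1}{m!}$ correctly collapses to $\tfrac{1}{(m-1)!}$ without an extra factor. If the signs turn out problematic, we would fall back on a direct shuffle computation. In that approach we apply $\Delta^{(m-1)}$ and use that $\widetilde{q}_{k}$ acts on exactly one tensor factor. By cocommutativity, we may place that factor first, at the cost of the factor $m$ that cancels against $\tfrac1m$ in the recursive formula $\phi_{n}^{m}=\tfrac1m\sum_r\phi_r\conv\phi^{m-1}_{n-r}$.
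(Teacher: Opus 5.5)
Your proof is correct. It uses the same two ingredients as the paper: the Leibniz rule $(f\conv g)Q=(-1)^{\degree{g}\degree{Q}}fQ\conv g+f\conv gQ$ and graded commutativity of $\conv$. It organises them differently, though.

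The paper argues by induction on $m$, using the two-term recursion $\phi_{n}^{m}=\frac{1}{m}\sum_{r}\phi_{r}\conv\phi_{n-r}^{m-1}$:
\begin{itemize}
\item it applies Leibniz to that product;
\item it uses the inductive hypothesis on $\phi_{n-r}^{m-1}\widetilde{q}_{k}$;
\item it swaps a double sum and uses commutativity together with $\sum_{r}\phi_{r}\conv\phi^{m-2}_{N-r}=(m-1)\phi^{m-1}_{N}$ to get a coefficient $\frac{m-1}{m}$;
\item that coefficient then combines with the $\frac{1}{m}$ from the other Leibniz term.
\end{itemize}

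You instead start from the closed form $\pi_{m}\Phi=\frac{1}{m!}(p\Phi)^{\conv m}$, differentiate all $m$ factors at once, and commute the differentiated factor to the front. This gives directly the global identity $\pi_{m}\Phi\,\widetilde{q}_{k}=(p\Phi\,\widetilde{q}_{k})\conv\pi_{m-1}\Phi$ on all of $SV$. The component statement then follows from the length bookkeeping in your Step~3. Your signs are indeed all trivial because $\degree{p\Phi}=0$, and the $\frac{m}{m!}=\frac{1}{(m-1)!}$ normalisation is right.

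What each route buys:
\begin{itemize}
\item Yours is shorter and needs no induction or reindexing of double sums.
\item The paper's stays inside the recursive formula that it reuses in Lemma~\ref{lem:PNM}, Lemma~\ref{lem:QNPNK}, and the main proof.
\end{itemize}

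Two minor points. Your Step~1 (that $\Phi\widetilde{q}_{k}$ is a $(\Phi,\Phi)$-coderivation) is true but never used, since Step~2 only needs $\widetilde{q}_{k}$ itself to be a coderivation. The fallback shuffle computation in your last paragraph is unnecessary.
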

\begin{proof}
We use induction on $m$. When $m=2$, we have
\begin{equation*}
\phi_{n}^{2} \widetilde{q}_{k}
=\frac{1}{2}\sum_{r=1}^{n-1} (\phi_{r} \conv \phi_{n-r}) \widetilde{q}_{k} 
=\frac{1}{2}\sum_{r=1}^{n-1} (\phi_{r} \widetilde{q}_{k} \conv \phi_{n-r}+\phi_{r}\conv \phi_{n-r} \widetilde{q}_{k}) 
= \sum_{r=1}^{n-1} (\phi_{r} \widetilde{q}_{k} \conv \phi_{n-r}) \, ,
\end{equation*}
where the last equality holds since $\conv$ is commutative.

Fix $m\geq 2$, and assume that
\[ \phi_{n}^{m-1} \widetilde{q}_{k} =\sum_{r=1}^{n-m+2} (\phi_{r} \widetilde{q}_{k}\conv \phi_{n-r}^{m-2})\]
holds for all $n\geq m-1$. Then, for any $n\geq m$, we have
\begin{multline*}
\frac{1}{m}\sum_{r=1}^{n-m+1} (\phi_{r} \conv \phi^{m-1}_{n-r} \widetilde{q}_{k})
= \frac{1}{m}\sum_{r=1}^{n-m+1} \sum_{s=1}^{n-r-m+2} (\phi_{r} \conv \phi_{s} \widetilde{q}_{k} \conv \phi_{n-r-s}^{m-2})\\
=\frac{1}{m}\sum_{s=1}^{n-m+1}\sum_{r=1}^{n-s-m+2} (\phi_{s}\widetilde{q}_{k}\conv \phi_{r}\conv \phi_{n-r-s}^{m-2})
=\frac{m-1}{m}\sum_{s=1}^{n-m+1} (\phi_{s} \widetilde{q}_{k}\conv \phi_{n-s}^{m-1}) \, ,
\end{multline*}
where the second equality holds since $\conv$ is commutative.
Therefore, we have
\[
\phi_{n}^{m} \widetilde{q}_{k}
=\frac{1}{m}\sum_{r=1}^{n-m+1}(\phi_{r}\conv \phi^{m-1}_{n-r}) \widetilde{q}_{k} \\
=\frac{1}{m}\sum_{r=1}^{n-m+1} (\phi_{r} \widetilde{q}_{k}\conv \phi^{m-1}_{n-r}+\phi_{r}\conv \phi^{m-1}_{n-r} \widetilde{q}_{k}) 
=\sum_{r=1}^{n-m+1} (\phi_{r} \widetilde{q}_{k}\conv \phi_{n-r}^{m-1}) \, .
\]
By the induction argument, this completes the proof.
\end{proof}

Similarly, we have the following relation.

\begin{lemma}\label{lem:QNPNK}
The following equation holds:
\[\widetilde{q}_{k} \phi_{n}^{m+k} =\sum_{r=k}^{n-m} (q_{k}\phi_{r}^{k} \conv \phi_{n-r}^{m}) : S^{n}(V)\to S^{m+1}(V)  \]
for all $n\geq m+k$.
\end{lemma}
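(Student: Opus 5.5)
Fix $k\geq 1$ and $m\geq 0$; throughout, $\phi^{0}_{j}$ means $\epsilon$ for $j=0$ and $0$ for $j\ge 1$, so the $m=0$ case reads $\widetilde{q}_{k}\phi_{n}^{k}=q_{k}\phi_{n}^{k}$ and is trivial. The plan is to compute $\widetilde{q}_{k}\Phi$ on $S^{n}(V)$ and read off its $S^{m+1}(V)$-component. Two ingredients do the work: the coderivation formula $\widetilde{q}_{k}=q_{k}\conv \id_{SV}$ from the appendix, and the fact that $\Phi$ is a coalgebra morphism.

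First, for any $x\in SV$ I will write $\widetilde{q}_{k}(x)=q_{k}(x_{(1)})\odot x_{(2)}$, with $q_{k}$ extended by zero outside $S^{k}(V)$. Applying this to $x=\Phi(\bfv)$ gives $\widetilde{q}_{k}\Phi(\bfv)=q_{k}(\Phi(\bfv)_{(1)})\odot \Phi(\bfv)_{(2)}$. Because $\Phi$ is a coalgebra morphism, $\Delta\Phi=(\Phi\tensor\Phi)\Delta$, so
\[\widetilde{q}_{k}\Phi(\bfv)=q_{k}\Phi(\bfv_{(1)})\odot \Phi(\bfv_{(2)}),\qquad\text{that is,}\qquad \widetilde{q}_{k}\Phi=(q_{k}\Phi)\conv\Phi.\]
Next I decompose by polynomial degree. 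Restrict to $\bfv\in S^{n}(V)$ and project onto $S^{m+1}(V)$. Since $q_{k}$ lands in $V=S^{1}(V)$, only the $S^{m}(V)$-component of the second factor survives, and only the $S^{k}(V)$-component of $\Phi(\bfv_{(1)})$ contributes to $q_{k}$. Splitting $\Delta\bfv$ by bidegree $(r,n-r)$, the term in degree $r$ contributes $q_{k}\phi_{r}^{k}\conv\phi_{n-r}^{m}$. This term vanishes unless $r\ge k$ and $n-r\ge m$, because $\phi_{r}^{k}=0$ for $r<k$, and likewise for $\phi^{m}_{n-r}$. That gives exactly the range $r=k,\dots,n-m$.

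The left-hand side of the lemma is the degree-$(m+1)$ part of $\widetilde{q}_{k}$ applied to the $S^{m+k}(V)$-component $\phi_{n}^{m+k}(\bfv)$. This is because $\widetilde{q}_{k}$ maps $S^{j}(V)$ to $S^{j-k+1}(V)$, so it agrees with the above projection of $\widetilde{q}_{k}\Phi$ restricted to $S^{n}(V)$.

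The main obstacle is bookkeeping rather than substance. I must make sure the convolution conventions match the paper's: $\conv$ uses $\odot$ as the product, and the zero-extension convention is the one used in Lemma~\ref{lem:PNKQ}. I must also check that Koszul signs cause no trouble. They do not here, because $\Phi$ has degree $0$, so passing $\Phi$ through Sweedler factors introduces no signs.
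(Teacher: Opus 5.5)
Your proposal is correct and follows essentially the same route as the paper. Both write $\widetilde{q}_{k}=\mu(q_{k}\tensor\id)\Delta$, push $\Delta$ through $\Phi$ using $\Delta\Phi=(\Phi\tensor\Phi)\Delta$, and read off the bidegree-$(r,n-r)$ pieces; the paper does this componentwise with the maps $\Delta_{r,s}$, while you first record the global identity $\widetilde{q}_{k}\Phi=(q_{k}\Phi)\conv\Phi$ and then project onto $S^{m+1}(V)$.
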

\begin{proof}
For each pair $(r,s)$, define $\Delta_{r,s}: S^{r+s}(V)\to S^{r}(V)\tensor S^{s}(V)$ by the composition with the natural inclusion and projections
\[ \Delta_{r,s}:S^{r+s}(V)\into SV\xto{\Delta} SV \tensor SV \onto S^{r}(V)\tensor S^{s}(V) \, . \]
Then, it is straightforward to check that
\begin{eqnarray*} 
\widetilde{q}_{k}\phi_{n}^{m+k} &=& \mu( q_{k}\tensor \id_{S^{m}(V)})\Delta_{k,m} \phi_{n}^{m+k} \\
&=& \sum_{r=k}^{n-m} \mu (q_{k}\tensor \id_{S^{m}(V)}) \circ (\phi_{r}^{k}\tensor \phi_{n-r}^{m}) \Delta_{r,n-r}
\\
&=& \sum_{r=k}^{n-m} \mu (q_{k}\phi_{r}^{k}\tensor \phi_{n-r}^{m}) \Delta_{r,n-r}\\
&=&\sum_{r=k}^{n-m} (q_{k}\phi_{r}^{k}\conv \phi_{n-r}^{m}) \, .
\end{eqnarray*}
This completes the proof.
\end{proof}

\subsection{Homotopy algebras}

\subsubsection{$A_\infty$-algebras}

An \textbf{$A_\infty[1]$ algebra} $(A,\mu)$ consists of a graded vector space $A$ together with a degree $1$ coderivation $\mu$ on the tensor coalgebra $TA$ such that $\mu(1)=0$ and $\mu^2=0$. Equivalently, $(TA,\mu)$ is a differential graded coalgebra satisfying $\mu(1)=0$.

A graded vector space $V$ is said to carry an $A_\infty$-algebra structure if its desuspension $V[1]$ admits the structure of an $A_\infty[1]$ algebra.

By Lemma~\ref{lem:CoderCogen}, any degree $1$ coderivation $\mu$ on $TA$ is uniquely determined by a family of linear maps
\[
\mu_n \colon A^{\otimes n} \to A, \qquad n \ge 1,
\]
each of degree $|\mu_n|=1$. In terms of these components, the condition $\mu^2=0$ is equivalent to the relations
\[
\sum_{i+j+k=n} \mu_{n-j+1} \circ (\id^{\otimes i} \otimes \mu_j \otimes \id^{\otimes k}) = 0,
\qquad n \ge 1.
\]
These identities provide an equivalent formulation of the notion of an $A_\infty[1]$ algebra.

Let $(A,\mu)$ and $(A',\mu')$ be $A_\infty[1]$ algebras. A morphism of $A_\infty[1]$ algebras from $(A,\mu)$ to $(A',\mu')$ is a morphism of differential graded coalgebras
\[
\Phi \colon (TA,\mu) \to (TA',\mu'),
\]
that is, a graded coalgebra morphism $\Phi \colon TA \to TA'$ satisfying $\Phi \circ \mu = \mu' \circ \Phi$.

\subsubsection{$L_\infty$-algebras}

An \textbf{$L_\infty[1]$ algebra} $(L,Q)$ consists of a graded vector space $L$ together with a degree $1$ coderivation $Q$ on the symmetric coalgebra $SL$ such that $Q(1)=0$ and $Q^2=0$. Equivalently, $(SL,Q)$ is a differential graded cocommutative coalgebra satisfying $Q(1)=0$.

A graded vector space $V$ is said to carry an $L_\infty$-algebra structure if its desuspension $V[1]$ admits the structure of an $L_\infty[1]$ algebra.

By Lemma~\ref{lem:CoderCogen}, any degree $1$ coderivation $Q$ on $SL$ is uniquely determined by a family of linear maps
\[
q_n \colon S^nL \to L, \qquad n \ge 1,
\]
each of degree $|q_n|=1$. In terms of these components, the condition $Q^2=0$ is equivalent to the relations
\[
\sum_{k=1}^n \sum_{\sigma\in \sh(k,n-k) }\varepsilon \cdot q_{n-k+1}(q_k(v_{\sigma(1)} \odot \cdots \odot v_{\sigma(k)})\odot v_{\sigma(k+1)}\odot \cdots \odot v_{\sigma(n)}) = 0,
\qquad n \ge 1,
\]
for any $v_1,\cdots ,v_n\in L$.
These identities provide an equivalent formulation of the notion of an $L_\infty[1]$ algebra.

Let $(L,Q)$ and $(L',Q')$ be $L_\infty[1]$ algebras. A morphism of $L_\infty[1]$ algebras from $(L,Q)$ to $(L',Q')$ is a morphism of differential graded coalgebras
\[
\Phi \colon (SL,Q) \to (SL',Q'),
\]
that is, a graded coalgebra morphism $\Phi \colon SL \to SL'$ satisfying $\Phi \circ Q = Q'\circ \Phi$.

\printbibliography
\end{document}